\DeclarePairedDelimiter{\ceil}{\lceil}{\rceil}
\newtheorem{theorem}{Theorem}[section]
\newtheorem{assumption}[theorem]{Assumption}
\newtheorem{lemma}[theorem]{Lemma}
\newtheorem{proposition}[theorem]{Proposition}
\newtheorem{corollary}[theorem]{Corollary}
\newtheorem{remark}[theorem]{Remark}
\newtheorem{definition}[theorem]{Definition}
\newtheorem{example}[theorem]{Example}
\newtheorem{property}[theorem]{Property}
\newcommand{\R}{\mathbb{R}}
\newcommand{\N}{\mathbb{N}}
\title{Symplectic QTT-FEM solution of the one-dimensional acoustic wave equation in the time domain}
\author{ \href{https://orcid.org/0009-0005-7783-4438}{\includegraphics[scale=0.06]{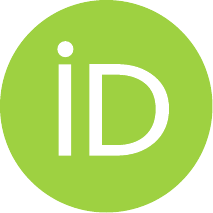}\hspace{1mm} Sara Fraschini} \\
	Faculty of Mathematics \\
    University of Vienna \\
    1090 Vienna, Austria \\
	\texttt{sara.fraschini@univie.ac.at} \\
	%% examples of more authors
	\And
	\href{https://orcid.org/0000-0002-9445-3267}
        {\includegraphics[scale=0.06]{orcid.pdf}\hspace{1mm}Vladimir Kazeev}\\
        Faculty of Mathematics \\
    University of Vienna \\
    and Research Network Data Science\\
    1090 Vienna, Austria \\
	\texttt{vladimir.kazeev@univie.ac.at} \\
	 \And
    \href{https://orcid.org/0000-0003-1368-2883}
        {\includegraphics[scale=0.06]{orcid.pdf}\hspace{1mm}Ilaria Perugia}\\
        Faculty of Mathematics \\
    University of Vienna \\
    1090 Vienna, Austria \\
	\texttt{ilaria.perugia@univie.ac.at} \\
}
\date{}
\begin{document}
\maketitle

\begin{abstract}
Structured Finite Element Methods (FEMs) based on low-rank approximation in the form of the so-called \emph{Quantized Tensor Train (QTT)} decomposition (QTT-FEM) have been proposed and extensively studied in the case of elliptic equations. In this work, we design a QTT-FE method for time-domain acoustic wave equations, combining stable low-rank approximation in space with a suitable  conservative discretization in time. For the acoustic wave equation with a homogeneous source term in a single space dimension as a model problem, we consider its reformulation as a first-order system in time. In space, we employ a low-rank QTT-FEM discretization based on continuous piecewise linear finite elements corresponding to uniformly refined nested meshes. Time integration is performed using symplectic high-order Gauss--Legendre Runge--Kutta methods.
In our numerical experiments, we investigate the energy conservation and exponential convergence
of the proposed method.
\end{abstract}

%\tableofcontents

\section{Introduction}\label{sec:introduction}

In this paper, we leverage quantized tensor decompositions to propose a novel scalable numerical method for the acoustic wave equation in the time domain, focusing on the setting of a single space dimension. The numerical scheme combines symplectic Gauss--Legendre time integrators with a low-rank truncated piecewise linear Finite Element Method (FEM) in space. The finite element spaces are induced by uniform nested meshes. The refinement of such discretizations is limited by their unfavorable tradeoff between accuracy and complexity that stems from the conventional approach of independently storing and directly accessing their degrees of freedom. Instead, these large
finite element spaces can be reparametrized in terms of a much lower number of parameters by the so-called \emph{Quantized Tensor Train} (QTT) decomposition. This QTT-structured FEM is meant to benefit from an approximation accuracy comparable to the one provided by the finest finite element mesh while requiring significantly fewer parameters than the corresponding finite element space. The resulting computational complexity reduction allows us to use finite element mesh size equal to $2^{-15}$ at each step of our time-stepping scheme. To the best of our knowledge, this work represents the first attempt in the literature to provide a QTT-structured discretization of the time-dependent acoustic wave equation. 

\subsection{Tensor-structured approximation}
In scientific computing, column vectors, matrices, and higher-dimensional arrays are commonly referred to as tensors.
Specifically, a real tensor of order $d$, or a $d$-dimensional tensor, is an element of $\R^{n_1 \times \ldots \times n_d}$, for positive sizes $n_1,\ldots,n_d \in \N$. Low-rank approximation can be generalized from the standard matrix setting to the case of tensors in various ways~\cite{KB2009}, which results in respective decompositions of tensors as generalizations of the low-rank factorization of matrices.
Just as low-rank representations of matrices can significantly reduce the cost of storage and computations, the low-rank tensor approximation achieves the same for higher-dimensional arrays, mitigating thereby the otherwise prohibitive complexity of storing and manipulating such data. For an overview of tensor decompositions and their computational applications, we refer to the surveys~\cite{KB2009,GKT2013,H2014} and to the comprehensive books~\cite{K2018,H2019}.

In the context of Partial Differential Equations (PDEs), low-rank tensor approximation has been initially developed to reduce the computational complexity of the numerical solution of PDEs on high-dimensional domains, which is a typical setting in quantum-physical computations or stochastic modeling; see the survey~\cite{B2023} and references therein. In this context, suitable low-rank tensor-structured representations compress the high-dimensional coefficient arrays representing the discrete solutions, understood as high-order tensors. These representations are essentially generalizations of the classical separation of variables, in which complex multivariate functions are decomposed in terms of much simpler, univariate functions. 

Even though the development of tensor approximation has historically been centered around high-dimensional problems and the so-called curse of dimension~\cite{Bellman:1961}, many PDE problems posed on domains of moderate dimensionality are still extremely difficult to solve numerically.
 Capturing specific features of solutions, such as singularities or high-frequency oscillations, may require an impractically large number of degrees of freedom when using discretization methods that are not tailored to the problem. In this context, standard solvers, such as uniform low-order FEMs, become computationally prohibitive, even for problems posed on one-, two-, or three-dimensional domains. As a result, more efficient discretization methods, such as \emph{hp}-FEMs~\cite{GB1986a,GB1986b} and homogenization techniques~\cite{AHP2021,AEEV2012}, have been developed. An alternative emerging approach is the low-rank tensor-structured approximation of high-order tensors representing the coefficients with respect to generic, low-order finite element bases
 associated with extremely fine grids. For the relation between the higher-order tensors and the FEM coefficients, there is a natural choice that associates the dimensions of the former with the scales of the latter, as opposed to its original, ``physical'' variables. Such interpretation of scales as independent variables subject to the separation of variables has been referred to as \emph{quantization}, or \emph{tensorization}. For example,
  a vector of length $2^L$ is reshaped into an $L^{\text{th}}$-order tensor of size $2\times \cdots \times 2$, and the low-rank approximation of the latter yields the approximation of the former; see, e.g.,~\cite{K2011,G2010,O2009b,O2010,KK2012}. This efficient low-parametric tensor-structured representation of vectors and matrices allows for the use of extremely fine uniform grids in the discretization. This results in a general approach for the approximation of PDEs with singularities or multiscale structures, which would otherwise require highly specialized methods. In this setting, low-order FEMs prove to be impressively accurate, using up to $L = 50$ discretization levels~\cite{BK2020,KORS2022}, and therefore grid sizes as fine as $2^{-50}$.

\subsection{Previous work on QTT}
The Tensor Train (TT) decomposition, introduced in \cite{OT2009a,O2011,O2009a,OT2009b}, is a multilinear low-parametric representation of multi-dimensional arrays based on the separation of variables.
This tensor decomposition traces back to the field of quantum physics~\cite{W1993,V2003,Verstraete:2006:MPS,Schollwoeck:2011:DMRG-MPS}, where it is known as \emph{Matrix Product States} (MPS) and has been used to model quantum spin systems. 

Compared to other low-rank tensor representations, the TT decomposition has several crucial properties a combination of which is necessary for its use for the numerical solution of PDEs. First, the TT decomposition can drastically reduce the storage requirements associated with higher-order tensors: the number of parameters grows quadratically with the ranks and linearly, rather than exponentially, with the number of dimensions it separates. Additionally, a quasi-optimal TT approximation of a given tensor with given ranks can be computed from the SVDs of a sequence of auxiliary matrices~\cite[Algo. 1]{O2011}. Moreover, many basic operations of linear algebra can be efficiently implemented within the TT decomposition, and a stable and fast rounding procedure for reducing the ranks of intermediate decompositions produced by such operations is available, see~\cite{Verstraete:2006:MPS} and~\cite[Algo. 2]{O2011}. This combination of properties makes the TT decomposition particularly suitable for high-dimensional computations.

When combined with the quantization, the TT decomposition is referred to as \emph{Quantized} Tensor Train or \emph{Quantics} Tensor Train (QTT) decomposition~\cite{O2009b,O2010,K2011,G2010}. Despite being a relatively recent low-rank representation technique, the QTT decomposition has already proven effective in numerical linear algebra and scientific computing fields. 

As shown in~\cite{O2010,KK2012,KKT2013,KRS2013}, various discrete operators can be represented or approximated with low rank in the QTT decomposition. This is crucial for the low-rank tensor-based approximation of PDE solutions. In this context, low-rank representations are needed for the corresponding problem data. When applied for the representation of matrices and vectors arising from uniform low-order discretizations, the QTT decomposition captures and exploits their self-similarity across different scales.

In PDE settings, the QTT-FEM, a combination of the QTT decomposition with simple low-order discretization methods based extremely fine grids, can provide approximations of solutions that have high accuracy and low complexity. Many problems for differential operators have been considered in the literature~\cite{DKO2012,KKNS2014,K2015,KS2018,BK2020,MRS2022,KORS2017,KORS2022,MRU2022,MTO2021}. In particular, exponential convergence rates with respect to the total number of representation parameters, similar to those shown for \emph{hp}-FEMs for singular solutions, have been established for solutions of elliptic PDEs with singularities or high-frequency oscillations in~\cite{KS2018,BK2020,MRS2022,KORS2017,KORS2022}.

When combined with suitable preconditioning techniques, such as the tensor-structured BPX preconditioner introduced in~\cite{BK2020} for uniformly elliptic problems and applied to one-dimensional singularly perturbed problems in~\cite{MRU2022}, low-rank QTT representations of discrete solutions on grids whose sizes approach machine precision can be used, making adaptive mesh refinement generally unnecessary in this setting. 

The power of the QTT decomposition lies in its ability to handle computationally challenging PDEs without relying on problem-specific \emph{a priori} information, such as high-order regularity or efficient approximation spaces. For example, for the approximation of singular solutions of PDE problems, the QTT-FEM approach does not require prior knowledge of the type or the exact location of the singularity. The discrete problem approximating the PDE problem is built using low-order finite element functions associated with a highly refined uniform grid, and the degrees of freedom of the discrete problem are never accessed directly but are manipulate exclusively via QTT approximations of FEM expansion coefficients. As a result, the approximation accuracy comparable to that of the underlying generic, low-order FEM discretization is achieved, while the low-rank compression afforded by low-rank QTT approximation drastically reduces the number of effective parameters (provided that the solution admits such approximation). For the QTT-FEM, the complexity of storage and computations is governed by the ranks of the QTT decompositions involved.
The QTT decomposition is effective for solving PDEs only when those ranks can be set to low values without compromising the accuracy of the method.
By theoretical analysis and numerical evidence, that approximability assumption was shown to hold for the solutions of several classes of elliptic PDE problems; we refer to~\cite{KS2018,MRS2022,KORS2022,MRU2022} for rank bounds that are polylogarithmic with respect to the Sobolev-norm accuracy.

QTT-structured numerical methods have been extensively explored in the literature for various classes of PDEs. Among the others, we refer to~\cite{K2015,KS2018,MRS2022,KORS2017,KORS2022,MRU2022,BK2020,MTO2021} for second-order elliptic PDEs,~\cite{KORS2017} for the one-dimensional Helmholtz equation,~\cite{KKNS2014,KS2015} for the chemical master equation,~\cite{KO2010} for the molecular Schr{\"o}dinger equation, and~\cite{DKO2012} for the Fokker--Planck equation. However, to the best of our knowledge, the use of QTT-structured numerical schemes for the acoustic wave equation in the time domain has not yet been addressed.

Building a QTT-structured numerical scheme for the time-dependent acoustic wave equation presents several challenges. If a space--time formulation is considered, desirable is uniform well-conditioning for the block system encompassing both space and time, achieving which is closely related to developing an asymptotically optimal \emph{space--time preconditioner}. Additionally, treating time as an additional one of the dimensions separated by tensor approximation means imposing, in particular, low-rank structure with respect to the spatial and temporal degrees of freedom. This can pose difficulties in problems with moving fronts in the space--time domain, where the spatial and temporal variables do not separate with low rank. If a QTT-structured discretization in space is combined with time stepping, the absence of a Courant--Friedrichs--Lewy (CFL) condition is essential for the feasibility of the combined solver. Furthermore, uniform well-conditioning is desirable for the family of linear systems solved during the time-stepping iterations, and the conservation properties of the continuous problem should be preserved with sufficient accuracy under low-rank approximation throughout time stepping.

\subsection{Contribution and outline}
We start from the first-order-in-time reformulation of the acoustic wave equation proposed in~\cite{BL1994,FP1996}. We discretize it by combining implicit Gauss--Legendre Runge--Kutta (GLRK) time integrators with QTT-FEM in space. As a consequence
of the GLRK stability, the resulting method is unconditionally stable, i.e., no CFL condition is required for stability. Moreover, the GLRK symplecticity~\cite[Ch. VI, Th. 4.2]{HLW2006} ensures the energy conservation in the absence of errors due to low-rank approximation. As demonstrated by our numerical tests, the energy preservation is still satisfied in the presence of such approximation. Additionally, for the single-stage GLRK scheme (i.e., the midpoint method), we observe numerically that the linear system at each time step is uniformly well-conditioned. For higher-order GLRK methods, we propose a stabilization technique based on the BPX preconditioner from~\cite{BK2020}. While the preconditioned system is not uniformly well-conditioned, the ill-conditioning is mitigated by the action of the proposed operator, allowing for favorable results.

The paper is structured as follows. In Section~\ref{sec:model problem}, we present the variational framework for the model problem under consideration. The construction of the finite element space underlying the QTT-FEM in space is described in Section~\ref{sec:semidiscrete}. Section~\ref{sec:time_discr} provides a brief summary of GLRK time integrators and the rationale for employing these schemes in our setting. In Section~\ref{sec::fully_discrete}, we present a fully-discrete method in a way that is amenable to parametrization by the QTT decomposition. The QTT decomposition itself and related notions are reviewed in Section~\ref{sec:qtt}. Section~\ref{sec:tt in space} illustrates how to represent and solve the devised time-stepping method in the QTT decomposition. In Section~\ref{subsec:initdata}, we describe how to obtain QTT-structured discrete initial data. Section~\ref{subsec:QTT_GLRK-FEM} is devoted to the combination of the QTT-FEM with the time stepping. The issue of ill-conditioning in the case of high-order GLRK schemes, along with a stabilization technique, is discussed in Section~\ref{subsec:high-order_GLRK-FEM}. Finally, in Section~\ref{sec:numerical_experiments}, we present
numerical experiments investigating the energy conservation and exponential accuracy of the QTT-structured discretization. 

\subsection{Notation}
Boldface letters are used to denote arrays. Capitalized letters are used for arrays of order $d>1$, and non-capitalized letters are used for vectors. The array entries are denoted by square brackets, i.e., $\boldsymbol{A}[i_1,\ldots,i_d]$ for arrays of order $d>1$, and $\boldsymbol{a}[i]$ for vectors.

For scalar quantities $A$ and $B$, $A \lesssim B$ is used to denote $A \leq C B$ with a constant $C>0$ that is independent of any parameters explicitly appearing in the expressions $A$ and $B$.

Given a time interval $(a,b) \subset \R$ and a function $w: (a,b) \rightarrow \R$, we denote with $\dot{w}$ 
its derivative in time. Instead, for a space domain $S \subset \R$ and $w: S \rightarrow \R$, we denote the first derivative with $w'$. For a bounded domain $D \subset \R$, $L^2(D)$ is the Lebesgue space of (classes of) real-valued square-integrable functions defined on $D$, endowed with the usual Hilbertian norm $\|\cdot\|_{L^2(D)}$. With the usual notation, we denote with $H^1(D)$ the Hilbert space of (classes of) square-integrable real-valued functions with square-integrable weak derivatives. We also consider $H^1_0(D)$ and its dual $H^{-1}(D)$, the former with the norm $\|w\|_{H^1_0(D)}=\|w'\|_{L^2(D)}$. Additionally, for $n \in \N$, $\mathbb{P}_n(D)$ is the space of polynomials of degree $n$ defined on $D$.

For functions depending on space and time, with possibly different regularities in space and time variables, we employ the usual notion of Bochner spaces of vector-valued functions. Specifically, let $[a,b] \subset \R$ be a time interval and $H$ be separable real Hilbert space. We denote with $C^0([a,b];H)$ and $C^1([a,b];H)$ the spaces of vector-valued functions $w: [a,b] \rightarrow H$ that are, respectively, strongly continuous and continuously differentiable. Additionally, $L^2(a,b;H)$ is the Hilbert space of (classes of) measurable functions $w: (a,b) \rightarrow H$ such that 
$$
 \int_{(a,b)} \|w(t)\|^2_H \ dt  < \infty.
$$
With this notation, and for an integer $k \in \N$, the Bochner Sobolev space $H^k(a,b;H)$ is defined as
$$
H^k(a,b;H) := \{ w \in L^2(a,b;H): \partial^{\boldsymbol{j}}_t w \in L^2(a,b;H) \text{ for all } |\boldsymbol{j}|\leq k\}.
$$

\section{Model problem}\label{sec:model problem}
Given the space domain $\Omega := (0,1)$, and a finite final time $T>0$, the problem under consideration is the one-dimensional acoustic wave problem
\begin{equation}\label{eq_wave}
\begin{cases}
\partial_{tt}u(x,t)-\partial_{xx} u(x,t)=0, \quad &(x,t) \in \Omega \times (0,T),\\
u(x,t)=0, \quad &(x,t) \in \partial\Omega \times (0,T),\\
u(x,0)=u_0(x), \quad
\partial_tu(x,t)_{|t=0}=v_0(x), \quad &x \in \Omega,
\end{cases}
\end{equation}
with initial conditions satisfying $u_0\in H^1_0(\Omega)$ and $v_0\in L^2(\Omega)$. 

As it is well known (see e.g.,~\cite[Ch.~V, \S29]{W1987},~\cite[Ch.~3, \S8]{LM1972}, and~\cite[Ch.~XVIII, \S6.1]{DL1992}), there exists a unique solution of the variational formulation:
\begin{equation*}\label{var_wave_2ndorder}
\begin{aligned}
&\text{Find } u \in 
L^2\left(0,T;H^1_0(\Omega)\right)\cap H^1\left(0,T;L^2(\Omega)\right) \text{ such that:}\\
&\displaystyle 
{}_{H^{-1}(\Omega)}\!\left\langle \partial_{tt}u(\cdot,t),\lambda\right\rangle_{H^1_0(\Omega)}
+ \int_\Omega \partial_x u(x,t) \hspace{0.05cm}  \lambda'(x) \ dx = 0 \quad \text{for all } \lambda \in H^1_0(\Omega),
\end{aligned}
\end{equation*}
for almost all $t \in (0,T)$, together with the initial conditions
\begin{equation*}
u(0) = u_0 \quad \text{in } H^1_0(\Omega), \qquad
\partial_tu(0) = v_0 \quad \text{in}\quad L^2(\Omega).
\end{equation*}
Furthermore, as observed in~\cite[Ch.~XVIII, \S6.1]{DL1992} and~\cite[Rmk.~8.2]{LM1972}, $u \in C^0\left([0,T];H^1_0(\Omega)\right)\cap C^1\left([0,T];L^2(\Omega)\right)
\cap H^2\left(0,T;H^{-1}(\Omega)\right)$.

In what follows, we consider the position--velocity reformulation of problem~\eqref{eq_wave} that has been proposed in~\cite{BL1994,FP1996}, namely,
\begin{equation}\label{eq_wave_ham}
\begin{cases}
\partial_{t}u(x,t) -v(x,t) = 0, \quad &(x,t) \in \Omega \times (0,T),\\
\partial_{t}v(x,t)-\partial_{xx} u(x,t)=0, \quad &(x,t) \in \Omega \times (0,T),\\
u(x,t)=0, \quad &(x,t) \in \partial{\Omega} \times (0,T),\\
u(x,0)=u_0(x), \quad
v(x,0)=v_0(x), \quad &x \in \Omega.
\end{cases}
\end{equation}
The pointwise in time variational formulation of problem~\eqref{eq_wave_ham} reads as:
\begin{equation}\label{var_wave}
    \begin{aligned}
        &\text{Find } u \in L^2\left(0,T;H^1_0(\Omega)\right) \cap H^1\left(0,T;L^2(\Omega)\right) \text{ and } v \in L^2\left(0,T;L^2(\Omega)\right) \text{ such that }\\
        &\begin{cases}
\displaystyle \int_\Omega \partial_tu(x,t)\hspace{0.05cm}\xi(x) \ dx -\int_\Omega v(x,t) \hspace{0.05cm}\xi(x) \ dx = 0 \quad &\text{for all } \xi \in L^2(\Omega),\\
\displaystyle 
{}_{H^{-1}(\Omega)}\!\left\langle \partial_{t}v(\cdot,t),\lambda\right\rangle_{H^1_0(\Omega)}
+ \int_\Omega \partial_x u(x,t) \hspace{0.05cm}  \lambda'(x) \ dx = 0 \quad &\text{for all } \lambda \in H^1_0(\Omega),
\end{cases}
    \end{aligned}
\end{equation}
for almost all $t \in (0,T)$, together with the initial conditions
\begin{equation}\label{in_data}
u(0) = u_0 \quad \text{in } H^1_0(\Omega), \qquad
v(0) = v_0 \quad \text{in } L^2(\Omega).
\end{equation}
Moreover~$u \in 
C^0\left([0,T];H^1_0(\Omega)\right)\cap C^1\left([0,T];L^2(\Omega)\right)$ and $v \in C^0\left([0,T];L^2(\Omega)\right)\cap
H^1\left(0,T;H^{-1}(\Omega)\right)$.

The energy of the Hamiltonian system~\eqref{var_wave} reads as
\begin{equation}\label{Energy}
E(t) := \int_\Omega \left( \frac{1}{2} |\partial_x u(x,t)|^2 + \frac{1}{2} |v(x,t)|^2  \right) dx, \quad t \in [0,T].
\end{equation}
A direct differentiation with respect to time gives that the total energy satisfies 
\begin{equation*}
\dot{E}(t) = 0 \quad \text{for all } t \in [0,T],
\end{equation*}
which implies
\begin{equation*}
E(t) = E(0) = \int_\Omega \left( \frac{1}{2} |u'_0(x)|^2 + \frac{1}{2} |v_0(x)|^2  \right) dx \geq 0 \quad \text{for all } t \in [0,T].
\end{equation*}
Motivated by the loss of accuracy for long-time computations suffered by numerical schemes that increase or dissipate energy, we shall devise a conservative discretization of formulation~\eqref{var_wave}. 

\section{Semi-discrete formulation in space}\label{sec:semidiscrete}
Intending to exploit tensor-structured techniques to offer accurate numerical approximations of the wave equation based on straightforward discretizations, the space discretization of the variational formulation~\eqref{var_wave} is based on a conforming piecewise linear FEM. 
\subsection{Low-order 
discretization in space}\label{sec:var_semidiscrete}
From now on, let $L \in \mathbb{N}$ be the fixed number of levels associated with
the iterative uniform binary partitioning of the spatial domain $\Omega$:
$$
\overline{\Omega} = \bigcup_{i = 1}^{2^L} K_{L,i},\quad\text{with}\quad
K_{L,i}:=[x_{L,i-1},x_{L,i}] \subset \mathbb{R}, \quad i \in \{1,\ldots,2^L\},
$$
where
$
    x_{L,i}=2^{-L}i$
    for $i \in \{0,\ldots,2^L\}.
$
Then,
\begin{equation*}
    \mathcal{T}_L:=\{K_{L,1},\ldots,K_{L,2^L}\}
\end{equation*}
is a uniform mesh of~$\Omega$ with mesh size $2^{-L}$.
As a discretization space, we employ the space $\mathcal{S}_1(\mathcal{T}_L)$ of functions that are continuous on $\overline{\Omega}$ and affine on each $K_{L,i}$, namely,
\begin{equation*}
    \mathcal{S}_1(\mathcal{T}_L) :=\{w_L \in H^1(\Omega): {w_L}_{|K_{L,i}} \in \mathbb{P}_1(K_{L,i}) \ \forall K_{L,i} \in \mathcal{T}_L\}.
\end{equation*}
To accommodate the homogeneous essential boundary conditions, we employ the finite element space
\begin{equation*}
    V_L := \mathcal{S}_1(\mathcal{T}_L) \cap H^1_0(\Omega).
\end{equation*}

The semi-discrete conforming FEM for formulation~\eqref{var_wave} reads as: 
\begin{equation}\label{semidis_wave}
\begin{aligned}
&\text{For any~$t\in(0,T)$, find } u_L(\cdot,t) \in V_L 
\text{ and } v_L(\cdot,t) \in V_L \text{ such that }\\
&\begin{cases}
\displaystyle \int_\Omega \partial_t{u}_L(x,t)\hspace{0.05cm}\xi_L(x) \ dx -\int_\Omega v_L(x,t) \hspace{0.05cm}\xi_L(x) \ dx = 0 \quad &\text{for all } \xi_L \in V_L,\\
\displaystyle \int_\Omega \partial_t{v}_L(x,t) \hspace{0.05cm}\lambda_L(x) \ dx + \int_\Omega \partial_x u_L(x,t) \hspace{0.05cm} \lambda_L'(x) \ dx = 0 \quad &\text{for all } \lambda_L \in V_L,
\end{cases}
\end{aligned}
\end{equation}
%for almost all $t \in (0,T)$,
together with initial conditions
\begin{equation}\label{in_data_semidis}
u_L(0) = P_L u_0, \quad 
v_L(0) = \Pi_L v_0, 
\end{equation}
where $P_L: H^1_0(\Omega) \rightarrow V_L$ is the elliptic projection 
\begin{equation}\label{ell_proj}
    \int_\Omega (P_Lw)'(x) \hspace{0.05cm}\chi'_L(x) \ dx = \int_\Omega w'(x) \hspace{0.05cm}\chi'_L(x) \ dx \quad \text{for all } \chi_L \in V_L,
\end{equation}
and $\Pi_L: L^2(\Omega) \rightarrow V_L$ is the $L^2$ projection 
\begin{equation}\label{l2_proj}
    \int_\Omega \Pi_Lw(x) \hspace{0.05cm}\chi_L(x) \ dx = \int_\Omega w(x) \hspace{0.05cm}\chi_L(x) \ dx \quad \text{for all } \chi_L\in V_L.
\end{equation}

The semi-discrete energy of the conforming FEM~\eqref{semidis_wave} is
\begin{equation}\label{E_semidis}
E_L(t) := \int_\Omega \left(\frac{1}{2} |\partial_x u_L(x,t)|^2 + \frac{1}{2} |v_L(x,t)|^2 \right) dx, \quad  t \in [0,T].
\end{equation}
As for the continuous setting, a direct differentiation with respect to time gives the global conservation of the semi-discrete energy:
\begin{equation*}
E_L(t) = E_L(0) = \int_\Omega \left(\frac{1}{2} \left|\left(P_Lu_0\right)'(x)\right|^2 + \frac{1}{2} \left|\Pi_Lv_0(x)\right|^2 \right) dx \geq 0 \quad \text{for all } t \in [0,T].
\end{equation*}

\subsection{Matrix form of the semi-discrete problem}\label{sec:matrix_form}
Let $N_L := 2^L-1$ be the dimension of the space~$V_L$.
We denote with $\{\widehat{\varphi}_{L,j}\}_{j=1}^{N_L}\subset V_L$ the $L^2$\emph{-normalized nodal basis of hat functions}, i.e., 
the set of basis functions of $V_L$ that are affine on each $K_{L,i} \in \mathcal{T}_L$, continuous on $\overline{\Omega}$, and such that
\begin{equation}\label{basis}
    \widehat{\varphi}_{L,j}(x_{L,i}) := 2^{\frac{L}{2}}\delta_{ij},
\end{equation}
for $i\in \{0,\ldots,N_L+1\}$ and $j \in \{1,\ldots,N_L\}$. The $L$-dependent normalization factor in the right-hand side of~\eqref{basis} implies the spectral equivalence between the mass matrix defined by this basis and the identity matrix, uniformly with respect to $L$.

The finite element spaces are nested: for all $L,\ell \in \mathbb{N}$ such that $\ell \leq L$, we have 
\begin{equation*}
    \widehat{\varphi}_{\ell,k} = \sum_{j=1}^{N_L} \widehat{\boldsymbol{P}}_{\ell,L}[j,k] \ \widehat{\varphi}_{L,j}, \quad k \in \{1,\ldots,N_\ell\},
\end{equation*}
where $\widehat{\boldsymbol{P}}_{\ell,L} \in \mathbb{R}^{N_L \times N_\ell}$ is the matrix representation in the nodal basis~\eqref{basis} of the prolongation operator from $V_\ell$ to $V_L$. The matrix representation of the symmetric version of the classical BPX preconditioner~\cite{BPX1990} has been introduced in~\cite{BK2020} for mixed Dirichlet--Neumann boundary conditions and considered here for Dirichlet boundary conditions. This two-sided version of the classical BPX preconditioner is given by the symmetric positive definite operator
\begin{equation}\label{BPX}
    \boldsymbol{C}_L := \sum_{\ell = 1}^{L} 2^{-\ell} \widehat{\boldsymbol{P}}_{\ell,L}\widehat{\boldsymbol{P}}^T_{\ell,L}.
\end{equation}
Let $\{\varphi_{L,i}\}_{i =1}^{N_L} \subset V_L$ be the hierarchical basis of $V_L$ defined as follows:
\begin{equation}\label{bpx_basis}
    \varphi_{L,i}:=\sum_{j=1}^{N_L} \boldsymbol{C}_L[j,i] \ \widehat{\varphi}_{L,j}, \quad  i \in \{1,\ldots,N_L\},
\end{equation}
which we call \emph{BPX-basis}. We define the degrees of freedom $\boldsymbol{u}_L=(\boldsymbol{u}_{L}[i])_{i}: [0,T]\rightarrow\mathbb{R}^{N_L}$ and $\widehat{\boldsymbol{v}}_L = (\widehat{\boldsymbol{v}}_L[j])_{j}: [0,T]\rightarrow\mathbb{R}^{N_L}$ of the approximations $u_L$ and $v_L$, respectively, as the coefficients of the representations
\begin{equation}\label{semidis_sol}
    u_L(x,t)=\sum_{i=1}^{N_L} (\boldsymbol{u}_{L}[i])(t) \ \varphi_{L,i}(x), \quad v_L(x,t)=\sum_{j=1}^{N_L} (\widehat{\boldsymbol{v}}_L[j])(t) \ \widehat{\varphi}_{L,j}(x).
\end{equation}

Let us denote with $\widehat{\boldsymbol{M}}_L \in \mathbb{R}^{N_L \times N_L}$ and $\widehat{\boldsymbol{A}}_L \in \mathbb{R}^{N_L \times N_L}$, respectively, the mass matrix and stiffness matrix with respect to the $L^2$-normalized nodal basis functions~\eqref{basis}, and $\boldsymbol{u}_{L,0} \in \mathbb{R}^{N_L}$ and $\widehat{\boldsymbol{v}}_{L,0} \in \mathbb{R}^{N_L}$, respectively, the vector representations of the coefficients of $P_Lu_0$ and $\Pi_Lv_0$  in the prescribed bases~\eqref{bpx_basis} and~\eqref{basis}. The matrix form of the space-semidiscrete formulation~\eqref{semidis_wave}
reads as
\begin{equation}\label{ode_wave}
    \begin{cases}
        \widehat{\boldsymbol{M}}_L\boldsymbol{C}_L \dot{\boldsymbol{u}}_L(t)= \widehat{\boldsymbol{M}}_L \widehat{\boldsymbol{v}}_L(t), \quad & t \in (0,T),\\
        \boldsymbol{C}_L\widehat{\boldsymbol{M}}_L \dot{\widehat{\boldsymbol{v}}}_L(t)= - \boldsymbol{C}_L\widehat{\boldsymbol{A}}_L \boldsymbol{C}_L \boldsymbol{u}_L(t), \quad &t \in (0,T),\\
        \boldsymbol{u}_L(0) = \boldsymbol{u}_{L,0},\\
        \widehat{\boldsymbol{v}}_L(0) = \widehat{\boldsymbol{v}}_{L,0}.
    \end{cases}
\end{equation}
In the next section, we devise an energy-conservative numerical integrator of this differential system.

\section{Symplectic time integrators}\label{sec:time_discr}
To provide an energy-preserving time-marching scheme for~\eqref{ode_wave}, we exploit the structure of the semi-discrete energy. 
\begin{definition}[Quadratic invariant~\protect{\cite[Ch. IV, \S2]{HLW2006}}]\label{def:quad_inv}
    Let $\boldsymbol{S}\in \mathbb{R}^{m\times m}$ be a symmetric matrix, and let $\mathcal{Q}:\mathbb{R}^m\to \mathbb{R}$ be the following quadratic function defined by $\boldsymbol{S}$:
    \begin{equation}\label{quad_inv}
        \mathcal{Q}(\boldsymbol{y}):=\boldsymbol{y}^T\boldsymbol{S}\boldsymbol{y}.
    \end{equation}
    $\mathcal{Q}$ is called a quadratic invariant of the ODE system
    \begin{equation}\label{ode_quad_inv}
        \dot{\boldsymbol{y}} = \boldsymbol{f}(\boldsymbol{y})
    \end{equation}
    if $\boldsymbol{S}$ satisfies
    \begin{equation*}
        \boldsymbol{y}^T\boldsymbol{S}\boldsymbol{f}(\boldsymbol{y}) = 0 \quad \text{for all } \boldsymbol{y}\in\mathbb{R}^m.
    \end{equation*}
\end{definition}

\begin{remark}
    The gradient of~$\mathcal{Q}$ defined in \eqref{quad_inv} is given by $\nabla\mathcal{Q}(\boldsymbol{y})=2\boldsymbol{y}^T\boldsymbol{S}$. Hence, being a quadratic invariant of an ODE system implies being constant along every solution of the given ODE.
\end{remark}

\begin{lemma}\label{lemma:quad_inv}
    The semi-discrete energy~\eqref{E_semidis} is a quadratic invariant of the ODE system~\eqref{ode_wave}.
\end{lemma}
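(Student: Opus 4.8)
The plan is to rewrite the semi-discrete energy $E_L(t)$ defined in~\eqref{E_semidis} in terms of the degrees of freedom $\boldsymbol{u}_L$ and $\widehat{\boldsymbol{v}}_L$ introduced in~\eqref{semidis_sol}, identify the associated symmetric matrix $\boldsymbol{S}$, and then verify the defining identity of Definition~\ref{def:quad_inv} using the structure of the right-hand side of the ODE system~\eqref{ode_wave}. Concretely, set $\boldsymbol{y} = (\boldsymbol{u}_L, \widehat{\boldsymbol{v}}_L)^T \in \mathbb{R}^{2N_L}$. First I would substitute the expansions~\eqref{semidis_sol} into~\eqref{E_semidis}: the term $\int_\Omega |\partial_x u_L|^2\,dx$ becomes $\boldsymbol{u}_L^T \boldsymbol{C}_L \widehat{\boldsymbol{A}}_L \boldsymbol{C}_L \boldsymbol{u}_L$, since the BPX-basis is obtained from the nodal basis via the matrix $\boldsymbol{C}_L$ as in~\eqref{bpx_basis} and $\widehat{\boldsymbol{A}}_L$ is the stiffness matrix in the nodal basis; similarly $\int_\Omega |v_L|^2\,dx = \widehat{\boldsymbol{v}}_L^T \widehat{\boldsymbol{M}}_L \widehat{\boldsymbol{v}}_L$ because $v_L$ is expanded in the plain nodal basis with mass matrix $\widehat{\boldsymbol{M}}_L$. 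Hence $E_L(t) = \boldsymbol{y}^T \boldsymbol{S} \boldsymbol{y}$ with the block-diagonal symmetric matrix
\begin{equation*}
\boldsymbol{S} = \frac{1}{2}\begin{pmatrix} \boldsymbol{C}_L \widehat{\boldsymbol{A}}_L \boldsymbol{C}_L & \boldsymbol{0} \\ \boldsymbol{0} & \widehat{\boldsymbol{M}}_L \end{pmatrix},
\end{equation*}
which is symmetric because $\widehat{\boldsymbol{A}}_L$, $\widehat{\boldsymbol{M}}_L$ are symmetric and $\boldsymbol{C}_L$ is symmetric by~\eqref{BPX}.

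Next I would cast~\eqref{ode_wave} in the form $\dot{\boldsymbol{y}} = \boldsymbol{f}(\boldsymbol{y})$. Since $\widehat{\boldsymbol{M}}_L$ and $\boldsymbol{C}_L$ are invertible (the former by the spectral equivalence noted after~\eqref{basis}, the latter as a sum of terms with $\widehat{\boldsymbol{P}}_{L,L} = \boldsymbol{I}$ making it SPD), the first two equations of~\eqref{ode_wave} give
\begin{equation*}
\dot{\boldsymbol{u}}_L = \boldsymbol{C}_L^{-1}\widehat{\boldsymbol{v}}_L, \qquad
\dot{\widehat{\boldsymbol{v}}}_L = -\widehat{\boldsymbol{M}}_L^{-1}\widehat{\boldsymbol{A}}_L\boldsymbol{C}_L\boldsymbol{u}_L,
\end{equation*}
so $\boldsymbol{f}(\boldsymbol{y}) = (\boldsymbol{C}_L^{-1}\widehat{\boldsymbol{v}}_L,\, -\widehat{\boldsymbol{M}}_L^{-1}\widehat{\boldsymbol{A}}_L\boldsymbol{C}_L\boldsymbol{u}_L)^T$. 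Then I would compute $\boldsymbol{y}^T\boldsymbol{S}\boldsymbol{f}(\boldsymbol{y})$ block by block: the top block contributes $\tfrac12\boldsymbol{u}_L^T\boldsymbol{C}_L\widehat{\boldsymbol{A}}_L\boldsymbol{C}_L\cdot\boldsymbol{C}_L^{-1}\widehat{\boldsymbol{v}}_L = \tfrac12\boldsymbol{u}_L^T\boldsymbol{C}_L\widehat{\boldsymbol{A}}_L\widehat{\boldsymbol{v}}_L$, and the bottom block contributes $\tfrac12\widehat{\boldsymbol{v}}_L^T\widehat{\boldsymbol{M}}_L\cdot(-\widehat{\boldsymbol{M}}_L^{-1}\widehat{\boldsymbol{A}}_L\boldsymbol{C}_L\boldsymbol{u}_L) = -\tfrac12\widehat{\boldsymbol{v}}_L^T\widehat{\boldsymbol{A}}_L\boldsymbol{C}_L\boldsymbol{u}_L$. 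Using the symmetry $\widehat{\boldsymbol{A}}_L^T = \widehat{\boldsymbol{A}}_L$, these two scalars are equal up to sign and cancel, giving $\boldsymbol{y}^T\boldsymbol{S}\boldsymbol{f}(\boldsymbol{y}) = 0$ for all $\boldsymbol{y}$, which is exactly the condition in Definition~\ref{def:quad_inv}.

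I do not expect a genuine obstacle here; the statement is essentially a bookkeeping exercise. The one point requiring a little care is the presence of the BPX matrix $\boldsymbol{C}_L$ and the mixed choice of bases (BPX-basis for $u_L$, nodal basis for $v_L$): one must track correctly where $\boldsymbol{C}_L$ and its inverse appear when translating between~\eqref{ode_wave}, which is written with the extra factors $\widehat{\boldsymbol{M}}_L$ and $\boldsymbol{C}_L$ multiplying the time derivatives, and the energy~\eqref{E_semidis}, and then confirm that the cancellation still goes through. An alternative, equivalent route that avoids inverting matrices is to work directly with~\eqref{ode_wave} as written: multiply the identity defining the gradient of $\mathcal{Q}$ along solutions, i.e., compute $\dot{E}_L = \boldsymbol{u}_L^T\boldsymbol{C}_L\widehat{\boldsymbol{A}}_L\boldsymbol{C}_L\dot{\boldsymbol{u}}_L + \widehat{\boldsymbol{v}}_L^T\widehat{\boldsymbol{M}}_L\dot{\widehat{\boldsymbol{v}}}_L$, substitute $\widehat{\boldsymbol{M}}_L\boldsymbol{C}_L\dot{\boldsymbol{u}}_L = \widehat{\boldsymbol{M}}_L\widehat{\boldsymbol{v}}_L$ and $\boldsymbol{C}_L\widehat{\boldsymbol{M}}_L\dot{\widehat{\boldsymbol{v}}}_L = -\boldsymbol{C}_L\widehat{\boldsymbol{A}}_L\boldsymbol{C}_L\boldsymbol{u}_L$ from~\eqref{ode_wave}, and observe the cancellation; this shows $\dot E_L \equiv 0$ and hence, combined with the remark after Definition~\ref{def:quad_inv}, proves that $E_L$ is a quadratic invariant.
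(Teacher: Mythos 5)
Your proof is correct and takes essentially the same approach as the paper: you identify the same block-diagonal symmetric matrix $\boldsymbol{S}=\tfrac12\operatorname{diag}(\boldsymbol{C}_L\widehat{\boldsymbol{A}}_L\boldsymbol{C}_L,\,\widehat{\boldsymbol{M}}_L)$ expressing $E_L$ as a quadratic form in $(\boldsymbol{u}_L,\widehat{\boldsymbol{v}}_L)$, and then appeal to Definition~\ref{def:quad_inv}. The paper is terser---it stops after exhibiting $\boldsymbol{S}$ and says the rest ``follows from the definition''---whereas you explicitly carry out the cancellation $\boldsymbol{y}^T\boldsymbol{S}\boldsymbol{f}(\boldsymbol{y})=0$, which is exactly the step the paper leaves implicit.
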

\begin{proof}
    The representation of the semi-discrete energy in the prescribed bases \eqref{bpx_basis}, \eqref{basis} of the finite element space $V_L$ reads as
    \begin{equation*}
        E_L(t) = \begin{pmatrix}
        {\boldsymbol{u}_L(t)}^T & {\widehat{\boldsymbol{v}}_L(t)}^T
        \end{pmatrix}
        \begin{pmatrix}
        \tfrac{1}{2}\boldsymbol{C}_L\widehat{\boldsymbol{A}}_L\boldsymbol{C}_L &  \\
         & \tfrac{1}{2}\widehat{\boldsymbol{M}}_L
        \end{pmatrix}
        \begin{pmatrix}
        \boldsymbol{u}_L(t)\\
        \widehat{\boldsymbol{v}}_L(t)
        \end{pmatrix},
        \quad t \in [0,T].
    \end{equation*}
Then, the thesis follows from
the definition of quadratic invariant and the fact that $
\begin{pmatrix}
    \boldsymbol{u}_L\\
    \widehat{\boldsymbol{v}}_L
\end{pmatrix}$ solves~\eqref{ode_wave}.
\end{proof}

As a result of Lemma~\ref{lemma:quad_inv}, our discretization in space is well-suited for integration in time with Runge--Kutta methods that preserve quadratic invariants. With this in mind, and being interested in providing a time discretization of arbitrary high-order, GLRK integration is a natural choice. GLRK methods with $q$ stages are known to be implicit, A-stable, collocation schemes of order $2q$, whose collocation points are the nodes of Gauss--Legendre quadrature in $[0,1]$. Furthermore, they are proven to preserve quadratic invariants~\cite[Ch. IV, \S2.1]{HLW2006}, which implies their symplecticity~\cite[Ch. VI, Th. 4.2]{HLW2006}.

Let us recall the standard notation for the Butcher tableau that defines Runge--Kutta methods with $q$ stages, i.e.,\\
\begin{center}
\begin{tabular}[b]{c|c}
$\boldsymbol{c}^{(q)}[1]$ & $\boldsymbol{A}^{(q)}[1,1]$ $\boldsymbol{A}^{(q)}[1,2]$ $\ldots$ $\boldsymbol{A}^{(q)}[1,q]$ \\
$\boldsymbol{c}^{(q)}[2]$ & $\boldsymbol{A}^{(q)}[2,1]$ $\boldsymbol{A}^{(q)}[2,2]$ $\ldots$ $\boldsymbol{A}^{(q)}[2,q]$ \\
$\vdots$ & \hspace{-0.3cm}{$\vdots$} \hspace{1.5cm}{$\vdots$} \hspace{0.6cm}{$\ddots$} \hspace{0.6cm}{$\vdots$} \\
$\boldsymbol{c}^{(q)}[q]$ & $\boldsymbol{A}^{(q)}[q,1]$ $\boldsymbol{A}^{(q)}[q,2]$ $\ldots$ $\boldsymbol{A}^{(q)}[q,q]$ \\
\hline
 & \hspace{0.1cm}{$\boldsymbol{b}^{(q)}[1]$} \hspace{0.3cm}{$\boldsymbol{b}^{(q)}[2]$} \hspace{0.1cm}{$\cdots$} \hspace{0.1cm}{$\boldsymbol{b}^{(q)}[q]$}
\end{tabular}  
\hspace{0.2cm} \raisebox{1.8ex}{=} \hspace{0.2cm}
\begin{tabular}[b]{ c|c } 
    $\boldsymbol{c}^{(q)}$ & $\boldsymbol{A}^{(q)}$ \\
     \hline
           & ${\boldsymbol{b}^{(q)}}^T$
\end{tabular}
\qquad \raisebox{1.8ex}{with $\boldsymbol{A}^{(q)} \in \R^{q \times q}$, and $\boldsymbol{b}^{(q)}, \boldsymbol{c}^{(q)} \in \R^{q}$.}
\end{center}
In the GLRK setting, the collocation points $\mathbf{c}^{(q)} = (\boldsymbol{c}^{(q)}[i])_{i} \in \R^{q}$ are the zeros of the $q^{\text{th}}$ shifted Legendre polynomial 
$$
\frac{d^{q}}{ds^{q}} \left(s^{q}(s-1)^{q}\right), \quad s \in [0,1],
$$
and the Runge--Kutta coefficients $\boldsymbol{A}^{(q)} =(\boldsymbol{A}^{(q)}[i,j])_{i,j} \in \R^{q \times q}$ and $\boldsymbol{b}^{(q)} = (\boldsymbol{b}^{(q)}[i])_{i} \in \R^{q}$ read as
$$
\boldsymbol{A}^{(q)}[i,j] := \int_{0}^{\boldsymbol{c}^{(q)}[i]} \ell^{(q)}_j(s) \ ds, \qquad \boldsymbol{b}^{(q)}[i] := \int_0^1 \ell^{(q)}_i(s) \ ds,
$$
for $i,j \in \{1,\ldots,q\}$, where $\ell^{(q)}_i$ is the Lagrange polynomial $\ell^{(q)}_i(s) := \Pi_{j \ne i} \tfrac{s - \boldsymbol{c}^{(q)}[j]}{\boldsymbol{c}^{(q)}[i] - \boldsymbol{c}^{(q)}[j]}$.

\begin{example}\label{ex:GLRK}
GLRK methods of order $2$ (implicit midpoint rule), $4$ (the method of Hammer \& Hollingsworth~\cite{HH1955}) and $6$, respectively, read as
\begin{center}
\setlength{\extrarowheight}{5pt} % Aggiunge un po' di spazio extra tra le righe
\begin{tabular}{c@{\qquad}c@{\qquad}c} 
    \begin{tabular}[b]{ c|c } % Allineamento in basso
        $\frac{1}{2}$ & $\frac{1}{2}$ \\
        \hline
        & $1$
    \end{tabular}
    &
    \begin{tabular}[b]{ c|c } % Allineamento in basso
        $\frac{1}{2} - \frac{\sqrt{3}}{6}$ & \hspace{0.2cm}{$\frac{1}{4}$} \hspace{0.8cm}{$\frac{1}{4} -  \frac{\sqrt{3}}{6}$}\\
        $\frac{1}{2} + \frac{\sqrt{3}}{6}$ & \hspace{-0.7cm}{$\frac{1}{4} +  \frac{\sqrt{3}}{6}$} \hspace{0.7cm}{$\frac{1}{4}$} \\
        \hline
        & \hspace{-0.35cm}{$\frac{1}{2}$} \hspace{1.25cm}{$\frac{1}{2}$}
    \end{tabular}
    &
    \begin{tabular}[b]{ c|c }  % Allineamento in basso
        $\frac{1}{2} - \frac{\sqrt{15}}{10}$ & \hspace{0.5cm}{$\frac{5}{36}$} \hspace{1cm}{$\frac{2}{9} -  \frac{\sqrt{15}}{15}$} \hspace{0.8cm}{$\frac{5}{36} -  \frac{\sqrt{15}}{30}$} \\
        $\frac{1}{2}$ & \hspace{0.1cm}{$\frac{5}{36} +  \frac{\sqrt{15}}{24}$} \hspace{0.9cm}{$\frac{2}{9}$} \hspace{1.3cm}{$\frac{5}{36} -  \frac{\sqrt{15}}{24}$}\\
        $\frac{1}{2} + \frac{\sqrt{15}}{10}$ & \hspace{-0.5cm}{$\frac{5}{36} +  \frac{\sqrt{15}}{30}$} \hspace{0.45cm}{$\frac{2}{9} +  \frac{\sqrt{15}}{15}$} \hspace{1.2cm}{$\frac{5}{36}$}\\ 
        \hline
        & \hspace{-0.2cm}{$\frac{5}{18}$} \hspace{1.7cm}{$\frac{4}{9}$} \hspace{1.6cm}{$\frac{5}{18}$}
    \end{tabular}
\end{tabular}
\end{center}

\end{example}

\section{Fully-discrete formulation}\label{sec::fully_discrete}
In this section, we combine 
GLRK schemes with the space FEM discretization of Section~\ref{sec:semidiscrete} (GLRK--FEM), providing a family of unconditionally stable, energy-preserving, fully-discrete methods for the wave equation~\eqref{eq_wave_ham}. The unconditional stability is related to the A-stability of Gauss--Legendre integrators, the energy conservation is implied by their preservation of quadratic invariants. 

The ODE-system~\eqref{ode_wave} 
can be written as follows
\begin{equation}\label{ode_wave_compact}
\begin{cases}
    \begin{pmatrix}
        \dot{\boldsymbol{u}}_L(t)\\
        \dot{\widehat{\boldsymbol{v}}}_L(t)
    \end{pmatrix}
    = \boldsymbol{B}_{2L}
    \begin{pmatrix}
        \boldsymbol{u}_L(t)\\
        \widehat{\boldsymbol{v}}_L(t)
    \end{pmatrix},
    \quad  t\in (0,T),\\
    \\
    \begin{pmatrix}
        \boldsymbol{u}_L(0)\\
        \widehat{\boldsymbol{v}}_L(0)
    \end{pmatrix}
    =
    \begin{pmatrix}
        \mathbf{u}_{L,0}\\
        \widehat{\mathbf{v}}_{L,0}
    \end{pmatrix},
\end{cases}
\end{equation}
where we have defined the matrix $\boldsymbol{B}_{2L}$ as
\begin{equation}\label{matrix_midp}
    \boldsymbol{B}_{2L}:=
\begin{pmatrix}
 & \boldsymbol{C}_L^{-1}\\
-\widehat{\boldsymbol{M}}_L^{-1}\widehat{\boldsymbol{A}}_L\boldsymbol{C}_L & 
\end{pmatrix}
\in \R^{2N_L\times2N_L}.
\end{equation}

Let us consider a uniform partition of the time interval $[0,T]$ into $n_t$ subintervals with step size $\tau:=\tfrac{T}{n_t}$, which gives the following discrete times:
\begin{equation}\label{discrete_times}
    0 = t_0 < t_1 < \ldots < t_{n_t} = T, \quad t_n := n\tau \quad \text{for all } n \in \{0,\ldots,n_t\}.
\end{equation}
The GLRK integration of system~\eqref{ode_wave_compact} with step size $\tau$, $q \in \N$ stages, and coefficients $\boldsymbol{A}^{(q)} =(\boldsymbol{A}^{(q)}[i,j])_{i,j} \in \R^{q \times q}$, $\boldsymbol{b}^{(q)} = (\boldsymbol{b}^{(q)}[i])_{i} \in \R^{q}$ reads as: For $n=0,1,\ldots,n_t-1$,
\begin{equation}\label{GLRK_FEM}
    \begin{aligned}
    \begin{pmatrix}
        \boldsymbol{u}_L^{(n+1)}\\
        \widehat{\boldsymbol{v}}_L^{(n+1)}
    \end{pmatrix}
    &= 
    \begin{pmatrix}
        \boldsymbol{u}_L^{(n)}\\
        \widehat{\boldsymbol{v}}_L^{(n)}
    \end{pmatrix}
    + \tau \sum_{i=1}^{q} \boldsymbol{b}^{(q)}[i] \ \boldsymbol{k}^{(n)}_{2L,i},\\
    \text{with} \quad
        \boldsymbol{k}^{(n)}_{2L,i} &= \mathbf{B}_{2L}\left(
        \begin{pmatrix}
         \boldsymbol{u}_L^{(n)}\\
         \widehat{\boldsymbol{v}}_L^{(n)}
    \end{pmatrix}
    + \tau \sum_{j=1}^{q} \boldsymbol{A}^{(q)}[i,j] \ \boldsymbol{k}^{(n)}_{2L,j} \right), \qquad i \in \{1,\ldots,q\},
    \end{aligned}
\end{equation}
where we have assumed that, for each $n \in \{1,\ldots,n_t\}$, $(\boldsymbol{u}_L^{(n)},\widehat{\boldsymbol{v}}_L^{(n)})$ denotes the numerical approximation of $(\boldsymbol{u}_L(t_n),\widehat{\boldsymbol{v}}_L(t_n))$, and $(\boldsymbol{u}_L^{(0)},\widehat{\boldsymbol{v}}_L^{(0)}) := (\mathbf{u}_{L,0}, \widehat{\mathbf{v}}_{L,0})$.

At each time $t_n = n\tau \in [0,T]$, the vectors $\boldsymbol{u}_L^{(n)} = (\boldsymbol{u}_L^{(n)}[i])_i  \in \R^{N_L}$ and $\widehat{\boldsymbol{v}}_L^{(n)} = (\widehat{\boldsymbol{v}}_L^{(n)}[j])_j \in \R^{N_L}$ provide numerical approximations of the time evaluations, respectively, of the semi-discrete position and semi-discrete velocity~\eqref{semidis_sol}, i.e., for $x \in \Omega$, the functions
\begin{equation}\label{fully_discr_sol}
    u^{(n)}_L(x):=\sum_{i=1}^{N_L} \boldsymbol{u}_L^{(n)}[i] \ \varphi_{L,i}(x) \qquad \text{and} \qquad v^{(n)}_L(x):=\sum_{j=1}^{N_L} \widehat{\boldsymbol{v}}_L^{(n)}[j] \ \widehat{\varphi}_{L,j}(x)
\end{equation}
approximate, respectively, 
\begin{equation}\label{semi_discr_sol_eval}
     u_L(x,t_n)=\sum_{i=1}^{N_L} (\boldsymbol{u}_{L}[i])(t_n) \ \varphi_{L,i}(x) \qquad \text{and} \qquad
     v_L(x,t_n)=\sum_{j=1}^{N_L} (\widehat{\boldsymbol{v}}_L[j])(t_n) \ \widehat{\varphi}_{L,j}(x).
\end{equation}

At each time $t_n = n\tau \in [0,T]$, with $n\in \{0,\ldots,n_t\}$, the fully-discrete energy is given by
\begin{equation}\label{E_dis}
\begin{aligned}
    E_L^{(n)} &:= \int_\Omega \left(\frac{1}{2} \left| \left(u^{(n)}_L\right)'(x)\right|^2 + \frac{1}{2} \left|v^{(n)}_L(x)\right|^2 \right) dx\\
    &= \begin{pmatrix}
        \boldsymbol{u}^{(n) T}_L& \widehat{\boldsymbol{v}}^{(n) T}_L
        \end{pmatrix}
        \begin{pmatrix}
        \tfrac{1}{2}\boldsymbol{C}_L\widehat{\boldsymbol{A}}_L\boldsymbol{C}_L &  \\
         & \tfrac{1}{2}\widehat{\boldsymbol{M}}_L
        \end{pmatrix}
        \begin{pmatrix}
        \boldsymbol{u}^{(n)}_L\\
        \widehat{\boldsymbol{v}}^{(n)}_L
        \end{pmatrix}.
\end{aligned}
\end{equation}
Due to the exact preservation of quadratic invariants by GLRK methods in exact arithmetic, the following equality between the fully-discrete and semi-discrete energy is satisfied:
\begin{equation*}
    E^{(n)}_L = E_L(t_n) \quad \text{for all} \quad t_n =n\tau \in [0,T]. 
\end{equation*}
Recalling the global conservation law of the semi-discrete energy, this equality implies
\begin{equation*}
    E^{(n)}_L = E_L(0) = \int_\Omega \left(\frac{1}{2} |(P_Lu_0)'(x)|^2 + \frac{1}{2} |\Pi_Lv_0(x)|^2 \right) dx \geq 0 \quad \text{for all } n \in \{0,\ldots,n_t\},
\end{equation*}
entailing that the error between the fully-discrete and exact energy is only related to the error in the projections~\eqref{ell_proj} and~\eqref{l2_proj} of the initial data when assuming that rounding errors are negligible.

\begin{remark}\label{FP}
It is noted in~\cite{FP1996} that the GLRK--FEM~\eqref{GLRK_FEM} is equivalent to a space--time discretization of the acoustic wave equation~\eqref{eq_wave_ham} analyzed in~\cite{FP1996}. Specifically,
\begin{itemize}
    \item As a result of~\cite[Th. 8]{FP1996}, if the continuous initial data $u_0$ and $v_0$ satisfy suitable compatibility conditions, the fully-discrete solutions~\eqref{fully_discr_sol} approximate the time evaluations of the exact solutions of the 
    variational formulation~\eqref{eq_wave_ham} with accuracy, respectively, $\mathcal{O}(2^{-2L}+\tau^{2q})$ and $\mathcal{O}(2^{-L}+\tau^{2q})$, for $L \rightarrow \infty$ and $q \rightarrow \infty$, in $\|\cdot\|_{L^2(\Omega)}$ and $\|\cdot\|_{H^1_0(\Omega)}$.
    \item For other optimal-order choices of the discrete initial data, the estimates for the position remain valid, while the error estimates for the velocity require $u_L(0) := P_Lu_0$.
\end{itemize}
\end{remark}

Let us reformulate the GLRK--FEM method~\eqref{GLRK_FEM} in a way that is amenable to tensor-structured parametrizations. From~\eqref{GLRK_FEM}, we note that, for each $n = 0,\ldots,n_t-1$, the updated position $\boldsymbol{u}_L^{(n+1)}$ and velocity $\widehat{\boldsymbol{v}}_L^{(n+1)}$ are defined, respectively, by the first $N_L$ components and the last $N_L$ components of  $\boldsymbol{k}^{(n)}_{2L,i} \in \R^{2N_L}$, $i \in \{1,\ldots, q\}$. Motivated by this fact and the block structure of $\boldsymbol{B}_{2L}$ defined in~\eqref{matrix_midp}, for each $n = 0,\ldots,n_t-1$, we introduce the vectors $\boldsymbol{k}_{u,L}^{(n,q)}\in \R^{qN_L}$ and $\boldsymbol{k}_{v,L}^{(n,q)} \in  \R^{qN_L}$ representing, respectively, these first and last $N_L$ components of $\boldsymbol{k}^{(n)}_{2L,i} \in \R^{2N_L}$ for each $i \in \{1,\ldots, q\}$. Namely, we define
\begin{equation*}
\boldsymbol{k}_{u,L}^{(n,q)} := 
\begin{pmatrix}
\boldsymbol{k}_{2L,1}^{(n)}[1,\ldots,N_L]\\
\vdots\\
\boldsymbol{k}_{2L,q}^{(n)}[1,\ldots,N_L]\\
\end{pmatrix}
 \in \R^{qN_L}, \qquad
\boldsymbol{k}_{v,L}^{(n,q)} := 
\begin{pmatrix}
\boldsymbol{k}_{2L,1}^{(n)}[N_L+1,\ldots,2N_L]\\
\vdots\\
\boldsymbol{k}_{2L,q}^{(n)}[N_L+1,\ldots,2N_L]\\
\end{pmatrix}
\in \R^{qN_L}.
\end{equation*}
Then, denoting with $\boldsymbol{I}^{(q)}$ the identity matrix of size $q\times q$, and $\boldsymbol{I}_L$ the identity matrix of size $N_L\times N_L$, the GLRK--FEM method~\eqref{GLRK_FEM} reads as: For $n=0,1,\ldots,n_t-1$,
\begin{subequations}\label{u-v_GLRK-FEM}
\begin{align}
        \widehat{\boldsymbol{u}}^{(n+1)}_L &= \boldsymbol{C}_L \boldsymbol{u}^{(n)}_L + \tau \left(\boldsymbol{b}^{(q)T }\otimes \boldsymbol{I}_L \right) \left(\boldsymbol{I}^{(q)} \otimes \boldsymbol{C}_L\right) \boldsymbol{k}_{u,L}^{(n,q)},\label{GLRK-FEM_uhat}\\
        \boldsymbol{C}_L \widehat{\boldsymbol{A}}_L\boldsymbol{C}_L \boldsymbol{u}^{(n+1)}_L &= \boldsymbol{C}_L \widehat{\boldsymbol{A}}_L \widehat{\boldsymbol{u}}^{(n+1)}_L, \label{GLRK-FEM_u}\\
        \widehat{\boldsymbol{v}}^{(n+1)}_L &= \widehat{\boldsymbol{v}}^{(n)}_L + \tau \left(\boldsymbol{b}^{(q)  T} \otimes \boldsymbol{I}_L \right) \boldsymbol{k}_{v,L}^{(n,q)},
    \end{align}
\end{subequations}
with
\begin{subequations}\label{k1-k2_GLRK-FEM}
\begin{align}
    \left(\boldsymbol{I}^{(q)} \otimes \boldsymbol{C}_L\right) \boldsymbol{k}_{u,L}^{(n,q)} &= \tau \left(\boldsymbol{A}^{(q)} \otimes \boldsymbol{I}_L\right) \boldsymbol{k}_{v,L}^{(n,q)} + (1,\ldots,1)^T \otimes \widehat{\boldsymbol{v}}^{(n)}_L,\label{GLRK-FEM_k1}\\ 
    \begin{split}\label{GLRK-FEM_k2}
    \left(\boldsymbol{I}^{(q)}\otimes\boldsymbol{I}_L + \tau^2 \boldsymbol{A}^{(q)}\boldsymbol{A}^{(q)} \otimes \widehat{\boldsymbol{M}}_L^{-1}\widehat{\boldsymbol{A}}_L\right)\boldsymbol{k}^{(n,q)}_{v,L} &= -\tau\left( \boldsymbol{A}^{(q)} \otimes \widehat{\boldsymbol{M}}_L^{-1}\widehat{\boldsymbol{A}}_L\right)\left((1,\ldots,1)^T \otimes \widehat{\boldsymbol{v}}^{(n)}_L\right)\\ 
    &\qquad- \left( \boldsymbol{I}^{(q)} \otimes \widehat{\boldsymbol{M}}_L^{-1}\widehat{\boldsymbol{A}}_L\right) \left((1,\ldots,1)^T \otimes \boldsymbol{C}_L \boldsymbol{u}^{(n)}_L\right), 
    \end{split} 
    \end{align}
\end{subequations}
where $(1,\ldots,1)^T \in \R^q$. Let us remark that the linear systems we need to solve are just the (asymptotically) uniformly well-conditioned system~\eqref{GLRK-FEM_u}, see Table~\ref{tab::cond_BPX-stiff}, and linear system~\eqref{GLRK-FEM_k2}, which defines the intermediate slopes for velocity. GLRK--FEM iterations for position and velocity~\eqref{u-v_GLRK-FEM}-\eqref{k1-k2_GLRK-FEM} can be implemented as shown in Algorithm~\ref{alg:GLRK-FEM}. These iterations are the ones we are interested in compressing and solving in the tensor-structured format. 

\begin{table}[h!]
    \centering
    \begin{tabular}{cccccc}
        \toprule
        $L$ & $h^{-2}_L$ & $\kappa_2( \widehat{\boldsymbol{A}}_L )$ &  $\kappa_2(\boldsymbol{C}_L \widehat{\boldsymbol{A}}_L\boldsymbol{C}_L )$\\
        \midrule
        1 & 4.0000000e0 & 1.0000000e0 & 1.0000000e0\\
        2 & 1.6000000e1 & 5.8284271e0& 2.5000000e0 \\
        3 & 6.4000000e1 & 2.5274142e1 & 4.1381100e0\\
        4 & 2.5600000e2 & 1.0308687e2 &  5.5799800e0\\
        5 & 1.0240000e3 & 4.1434506e2 &  6.8098300e0\\
        6 & 4.0960000e3 & 1.6593797e3 & 7.8445700e0\\
        7 & 1.6384000e4 & 6.6395184e3 & 8.7181000e0\\
        8 & 6.5536000e4 & 2.6560073e4 & 9.4564400e0\\
        9 & 2.6214400e5 & 1.0624229e5 & 1.0081800e1\\
        10 & 1.0485760e6 & 4.2497118e5 & 1.0613000e1\\
    \bottomrule
    \end{tabular}
    \caption{Comparison between the spectral condition number of the stiffness matrix $\kappa_2( \widehat{\boldsymbol{A}}_L)$ and the preconditioned stiffness matrix $\kappa_2(\boldsymbol{C}_L \widehat{\boldsymbol{A}}_L\boldsymbol{C}_L)$. We denote the uniform mesh size in space with $h_L:=2^{-L}$.}
    \label{tab::cond_BPX-stiff}
\end{table}

\begin{algorithm}
\caption{GLRK--FEM~\eqref{u-v_GLRK-FEM}-\eqref{k1-k2_GLRK-FEM}}\label{alg:GLRK-FEM}
\hspace*{\algorithmicindent} \textbf{Input:} $\left(\boldsymbol{u}^{(n)}_L,\widehat{\boldsymbol{v}}^{(n)}_L\right)$\\
\hspace*{\algorithmicindent} \textbf{Output:} $\left(\boldsymbol{u}^{(n+1)}_L,\widehat{\boldsymbol{v}}^{(n+1)}_L\right)$
\begin{algorithmic}[1]
\State $\widehat{\boldsymbol{u}}^{(n)}_L \gets \boldsymbol{C}_L \boldsymbol{u}^{(n)}_L$
\State $\widehat{\boldsymbol{u}}^{(n,q)}_L \gets (1,\ldots,1)^T \otimes \widehat{\boldsymbol{u}}^{(n)}_L$, with $(1,\ldots,1)^T \in \R^q$
\State $\widehat{\boldsymbol{v}}^{(n,q)}_L \gets (1,\ldots,1)^T \otimes \widehat{\boldsymbol{v}}^{(n)}_L$, with $(1,\ldots,1)^T \in \R^q$
\State $\boldsymbol{k}^{(n,q)}_{v,L} \gets $ Solve \eqref{GLRK-FEM_k2} given $\left(\widehat{\boldsymbol{u}}^{(n,q)}_L, \widehat{\boldsymbol{v}}^{(n,q)}_L\right)$
\State $\widehat{\boldsymbol{v}}^{(n+1)}_L \gets \widehat{\boldsymbol{v}}^{(n)}_L + \tau \left(\boldsymbol{b}^{(q)  T} \otimes \boldsymbol{I}_L \right) \boldsymbol{k}_{v,L}^{(n,q)}$
\State $\left(\boldsymbol{I}^{(q)} \otimes \boldsymbol{C}_L\right) \boldsymbol{k}_{u,L}^{(n,q)} \gets \tau \left(\boldsymbol{A}^{(q)} \otimes \boldsymbol{I}_L\right) \boldsymbol{k}_{v,L}^{(n,q)} + \widehat{\boldsymbol{v}}^{(n,q)}_L $
\State $\widehat{\boldsymbol{u}}^{(n+1)}_L \gets \widehat{\boldsymbol{u}}^{(n)}_L + \tau \left(\boldsymbol{b}^{(q)T }\otimes \boldsymbol{I}_L \right) \left(\boldsymbol{I}^{(q)} \otimes \boldsymbol{C}_L\right) \boldsymbol{k}_{u,L}^{(n,q)}$
\State $\boldsymbol{u}^{(n+1)}_L \gets$ Solve~\eqref{GLRK-FEM_u} given $\widehat{\boldsymbol{u}}^{(n+1)}_L$
\end{algorithmic}
\end{algorithm}

\section{QTT decomposition}\label{sec:qtt}
In view of the model problem formulated in Section~\ref{sec:model problem} and discretized by the GLRK--FEM~\eqref{u-v_GLRK-FEM}-\eqref{k1-k2_GLRK-FEM}, we review the Quantized Tensor Train (QTT) approximation of vectors from $\mathbb{R}^{2^L}$ with $L \in \N$.

\begin{definition}[TT decomposition~\protect{\cite{O2011}}]
    Let $\boldsymbol{W} \in \mathbb{R}^{n_1 \times \ldots \times n_L}$ be a tensor with $L \in \mathbb{N}$ dimensions and mode sizes $n_1,\ldots,n_L \in \mathbb{N}$. Let $r_0,\ldots,r_{L} \in \mathbb{N}$ with $r_0=1$ and $r_L=1$, and, for $\ell \in \{1,\ldots,L\}$, let $\boldsymbol{W}_\ell$ be three-dimensional arrays of size $r_{\ell-1}\times n_\ell\times r_\ell$. The $L^{\text{th}}$-order tensor $\boldsymbol{W}$ is said to be represented in the Tensor Train (TT) decomposition with TT ranks $r_1,\ldots,r_{L-1}$, and cores, or factors, $\boldsymbol{W}_1,\ldots,\boldsymbol{W}_L$ if
    \begin{equation}\label{TT}
    \begin{split}
        \boldsymbol{W}[j_1,\ldots,j_L] = \sum_{\alpha_1 = 1}^{r_1} \cdots \sum_{\alpha_{L-1}=1}^{r_{L-1}}\boldsymbol{W}_1[\alpha_0,&j_1,\alpha_1]\boldsymbol{W}_2[\alpha_1,j_2,\alpha_2]\cdots\\ &\boldsymbol{W}_{L-1}[\alpha_{L-2},j_{L-1},\alpha_{L-1}]\boldsymbol{W}_L[\alpha_{L-1},j_L,\alpha_L],
    \end{split}
    \end{equation}
for all $j_k \in \{1,\ldots,n_k\}$ with $k \in \{1,\ldots,L\}$, where $\alpha_0\equiv1$ and $\alpha_L\equiv 1$ are dummy indices.
\end{definition}

A representation of the form~\eqref{TT} allows to parametrize a tensor $\boldsymbol{W} \in \mathbb{R}^{n_1 \times \ldots \times n_L}$ by 
\begin{equation}\label{TT-param}
\sum_{\ell=1}^L r_{\ell-1}n_\ell r_\ell \leq L \hspace{0.02cm} n_{\max}r^2_{\max}
\end{equation}
parameters from $\mathbb{R}$, where $n_{\max}:=\max\{n_1,\ldots,n_L\}$ and $r_{\max}:=\max\{r_1,\ldots,r_{L-1}\}$, instead of storing
$
    \displaystyle \Pi_{\ell=1}^L n_\ell \leq n^L_{\max}
$
entries. The number of parameters of the TT representation, formally linear in $L$, is mainly
governed by the TT ranks, which can in principle still
scale exponentially with respect to $L$. Therefore, proper rank bounds are necessary for actually avoiding the so-called \textit{curse of dimensionality} that arises from the entrywise storage of high-dimensional arrays. As shown by theoretical considerations and numerical evidence, the boundedness of the TT ranks 
is fulfilled in many applications; see, e.g.,~\cite{GKT2013}. 

In the present work, we use the TT decomposition to represent a given coefficient vector $\boldsymbol{w} \in \mathbb{R}^{2^L}$ in a low-rank tensor format. Our approach combines 
two ingredients: \emph{i)} the so-called \textit{quantization}, which is an isometric isomorphism $ \mathbb{R}^{2^L}\simeq \mathbb{R}^{2\times\cdots\times2} $ reshaping the $2^L$-component vector into an $L$-dimensional tensor; \emph{ii)} the TT
decomposition~\eqref{TT} for the resulting $L$-dimensional tensor. Precisely, for $2^L$-component vectors, we work with representations of the form
\begin{subequations}\label{QTT}
\begin{equation}
\begin{split}
    \boldsymbol{w}[j] = \sum_{\alpha_1 = 1}^{r_1} \cdots &\sum_{\alpha_{L-1}=1}^{r_{L-1}}\boldsymbol{W}_1[1,j_1,\alpha_1]\boldsymbol{W}_2[\alpha_1,j_2,\alpha_2]\cdots\\
    &\boldsymbol{W}_{L-1}[\alpha_{L-2},j_{L-1},\alpha_{L-1}]\boldsymbol{W}_L[\alpha_{L-1},j_L] \quad \text{for all } j \in \{1,\ldots,2^L\},
\end{split}
\end{equation}
where the identification of the \textit{physical} index $j$ with the \textit{virtual}, or mode, indices $j_1,\ldots,j_L \in \{1,2\}$ is provided by the lexicographic ordering
\begin{equation}\label{QTT-quant}
j := \overline{j_1,\ldots,j_L} := 1 + \sum_{\ell=1}^L 2^{L-\ell}(j_\ell-1) \in \{1,\ldots,2^L\} \quad \longleftrightarrow \quad (j_1,\ldots,j_L) \in \{1,2\}^{\times L}.
\end{equation}
\end{subequations}
For a vector $\boldsymbol{w}\in \mathbb{R}^{2^L}$, a representation of
the form~\eqref{QTT} is called a \textit{Quantized Tensor Train} (QTT) decomposition with \textit{cores}, or \textit{factors}, $\boldsymbol{W}_1,\ldots,\boldsymbol{W}_L$, and \textit{QTT ranks} $r_1,\ldots,r_{L-1}$. As for~\eqref{TT}, whose number of parameters~\eqref{TT-param} can drastically reduce the storage requirements associated with a given tensor $\boldsymbol{W} \in \mathbb{R}^{n_1 \times \ldots \times n_L}$, a representation of the form~\eqref{QTT} allows us to parametrize a vector $\boldsymbol{w} \in \mathbb{R}^{2^L}$ by
\begin{equation*}
    2r_1+\sum_{\ell=2}^{L-1}2r_{\ell-1}r_\ell + 2r_{\ell-1} \leq 2Lr^2_{\max}
\end{equation*}
parameters from $\mathbb{R}$, where $r_{\max}:=\max\{r_1,\ldots,r_{L-1}\}$, instead of storing $2^L$ entries. Notably, if the QTT ranks have a polynomial dependence on $L$, the number of parameters is still polynomial in $L$. 

One can extend the definition of QTT decomposition to matrices of size $2^L\times 2^L$ in such a way that matrix-by-vector multiplications can be efficiently performed in the format by acting on the $L$ cores independently. Specifically, for a given matrix $\boldsymbol{A} \in \mathbb{R}^{2^L \times 2^L}$, we work with representations of the form 
\begin{equation}\label{QTT-mat}
\begin{split}
    \boldsymbol{A}[i,j] = \sum_{\beta_1 = 1}^{p_1} \cdots &\sum_{\beta_{L-1}=1}^{p_{L-1}} \boldsymbol{A}_1[1,i_1,j_1,\beta_1]\boldsymbol{A}_2[\beta_1,i_2,j_2,\beta_2]\cdots\\
    &\boldsymbol{A}_{L-1}[\beta_{L-2},i_{L-1},j_{L-1},\beta_{L-1}]\boldsymbol{A}_L[\beta_{L-1},i_L,j_L] \quad \text{for all } i,j \in \{1,\ldots,2^L\},
\end{split}
\end{equation}
where the physical indices $i$ and $j$ are mapped into the virtual indices $i_1,...,i_L \in \{1,2\}$
and $j_1,...,j_L \in \{1,2\}$ under the same binary quantization of~\eqref{QTT-quant}. As for $2^L$-component vectors, for a matrix $\boldsymbol{A}\in \mathbb{R}^{2^L \times 2^L}$, a representation of
the form~\eqref{QTT-mat} is called a QTT decomposition with cores, or factors, $\boldsymbol{A}_1,\ldots,\boldsymbol{A}_L$, and ranks $p_1,\ldots,p_{L-1}$, and its total number of parameters is bounded by $4Lp^2_{\max}$, where $p_{\max}:=\max\{p_1,\ldots,p_{L-1}\}$.

\subsection{Useful insights into the QTT decomposition}\label{subsec:insights_QTT}
In this section, we address
linear algebra operations in the QTT format and highlight 
that they preserve the QTT structure. For brevity, several properties are stated for matrices, as they can be easily specialized for vectors, the latter being seen as one-column matrices. A comprehensive
presentation of the following properties is provided in~\cite{O2011}, which is our main reference for this section.

\begin{property}[Addition and multiplication by a scalar]\label{prop:add_QTT}
    Let $\boldsymbol{A} \in \R^{2^L \times 2^L}$ and $\boldsymbol{B} \in \R^{2^L \times 2^L}$ with QTT decompositions of ranks $r_1,\ldots,r_{L-1}$ and $p_1,\ldots,p_{L-1}$, respectively. Then, for all $\alpha,\beta \in \mathbb{R}$, the linear combination $\alpha \boldsymbol{A}+\beta \boldsymbol{B}$ admits a QTT decomposition of ranks $r_1+p_1,\ldots,r_{L-1}+p_{L-1}$. Furthermore, the addition requires virtually no operations, and the multiplication by a scalar just needs to scale one of the cores by the same scalar.
\end{property}

\begin{property}[Diagonalization of a vector]\label{Prop::QTT-diag}
Let $\boldsymbol{w} \in \mathbb{R}^{2^L}$ be a vector represented in a QTT decomposition with certain ranks. Then, its diagonalization, which is the square matrix whose diagonal coincides with $\boldsymbol{w}$, admits a QTT decomposition with the same ranks.
\end{property}

\begin{property}[Matrix-by-matrix product]\label{Prop::QTT-mat-by-vec}
    Let $\boldsymbol{A} \in \R^{2^L \times 2^L}$ and $\boldsymbol{B} \in \R^{2^L \times 2^L}$ with QTT decompositions of ranks $r_1,\ldots,r_{L-1}$ and $p_1,\ldots,p_{L-1}$, respectively. Then, their product $\boldsymbol{A}\boldsymbol{B}$ can be represented in the QTT format with ranks $r_1p_1,\ldots,r_{L-1}p_{L-1}$. The total number of operations $N_{\textrm{op}}$ for such a matrix-by-matrix product satisfies $N_{\textrm{op}} \lesssim Lr^2_{\max}p^2_{\max}$, where $r_{\max}:=\max\{r_1,\ldots,r_{L-1}\}$, and $p_{\max}:=\max\{p_1,\ldots,p_{L-1}\}$.
\end{property}

\begin{property}[Hadamard product]
Let $\boldsymbol{A} \in \R^{2^L \times 2^L}$ and $\boldsymbol{B} \in \R^{2^L \times 2^L}$, given, respectively, in QTT decompositions of ranks $r_1,\ldots,r_{L-1}$ and $p_1,\ldots,p_{L-1}$. Then, the Hadamard, or elementwise, product $\boldsymbol{A}\odot\boldsymbol{B}$ admits a QTT decomposition of ranks $r_1p_1,\ldots,r_{L-1}p_{L-1}$.
\end{property}

\begin{property}[Frobenius norm]
    Let $\boldsymbol{A} \in \R^{2^L \times 2^L}$, given in a QTT decomposition with ranks $r_1,\ldots,r_{L-1}$. Then, the Frobenius norm $\|\boldsymbol{A}\|_F = \sqrt{\sum_{i,j}\boldsymbol{A}[i,j]^2}$ can be computed with total number of operations $N_{\textrm{op}} \lesssim Lr_{\max}^3$, where $r_{\max}:=\max\{r_1,\ldots,r_{L-1}\}$.
\end{property}

As observed above, performing basic linear algebra operations with QTT tensors yields results in the QTT format, but with increased QTT ranks. Thus, to avoid rank growth, one aims to reduce the ranks while maintaining accuracy. A powerful method for this task is the Schmidt decomposition~\cite[Algo. 2]{O2011}, a stable and fast (number of operations $N_{\textrm{op}} \lesssim Lr^3_{\max}$) rounding procedure acting directly on the cores of the given QTT representation.

\subsection{Core notation and strong Kronecker product}
In this section, we recall useful notation and definitions to work in the QTT format, see, e.g.,~\cite{BK2020,KK2012} and references therein. 

\begin{definition}[Core]
    For each $\alpha \in \{1,\ldots,p\}$ and $\beta \in \{1,\ldots,q\}$, let $\boldsymbol{W}^{[\alpha,\beta]} \in \R^{m\times n}$. We call a core of rank $p\times q$ and mode size $m\times n$ the $4$-dimensional array $\boldsymbol{W} \in \R^{p \times m \times n \times q}$ defined as
    \begin{equation*}
        \boldsymbol{W}[\alpha,i,j,\beta] := \boldsymbol{W}^{[\alpha,\beta]}[i,j],  
    \end{equation*}
    for each $\alpha \in \{1,\ldots,p\}$, $i \in \{1,\ldots,m\}$, $j \in \{1,\ldots,n\}$, $\beta \in \{1,\ldots,q\}$. Additionally, for any core $\boldsymbol{W}$ of rank $p \times q$ and mode size $m \times n$, we refer to each matrix $\boldsymbol{W}^{[\alpha,\beta]} \in \R^{m \times n}$, with $\alpha \in \{1,\ldots,p\}$ and $\beta \in \{1,\ldots,q\}$, as block $(\alpha,\beta)$ of the core $\boldsymbol{W}$.
\end{definition}

Any core $\boldsymbol{W} \in \R^{p \times m \times n \times q}$ can be explicitly defined through its blocks by representing the core as a matrix whose entries are the blocks, i.e., using the notation
\begin{equation*}
    \boldsymbol{W} = 
    \begin{bmatrix}
        \boldsymbol{W}^{[1,1]} & \ldots & \boldsymbol{W}^{[1,q]}\\
        \vdots & \ddots & \vdots\\
        \boldsymbol{W}^{[p,1]} & \ldots & \boldsymbol{W}^{[p,q]}
    \end{bmatrix},
\end{equation*}
where square brackets are used to distinguish cores from ordinary matrices. This block notation can be combined with the notion of \emph{strong Kronecker product}, introduced in~\cite{dLS1994}, to get a compact notation for the QTT decomposition.

\begin{definition}[Strong Kronecker product]
    Let $\boldsymbol{W}_1$ and $\boldsymbol{W}_2$ be cores of ranks $r_0 \times r_1$ and $r_1 \times r_2$, composed, respectively, of blocks $\boldsymbol{W}^{[\alpha_0,\alpha_1]}_1 \in \R^{m_1 \times n_1}$ and $\boldsymbol{W}^{[\alpha_1,\alpha_2]}_2 \in \R^{m_2 \times n_2}$, $\alpha_k \in \{1,\ldots,r_k\}$ for $k \in \{0,1,2\}$. The strong Kronecker product $\boldsymbol{W}_1 \Join \boldsymbol{W}_2$  is the core of rank $r_0 \times r_2$ and mode size $m_1m_2 \times n_1 n_2$ consisting of blocks
    \begin{equation*}
        (\boldsymbol{W}_1 \Join \boldsymbol{W}_2)^{[\alpha_0,\alpha_1]} := \sum_{\alpha_1 =1}^{r_1} \boldsymbol{W}^{[\alpha_0,\alpha_1]}_1 \otimes \boldsymbol{W}^{[\alpha_1,\alpha_2]}_2, \qquad \alpha_k \in \{1,\ldots,r_k\}, \ k \in \{0,2\}.
    \end{equation*}
\end{definition}
This definition means that we define $\boldsymbol{W}_1 \Join \boldsymbol{W}_2$ as the usual matrix-by-matrix product of the block forms of $\boldsymbol{W}_1$ and $\boldsymbol{W}_2$, where their entries (blocks) are multiplied using their Kronecker products. For example, for two cores of rank $2 \times 2$, we have
$$
\begin{bmatrix}
    \boldsymbol{U}^{[1,1]} & \boldsymbol{U}^{[1,2]}\\
    \boldsymbol{U}^{[2,1]} & \boldsymbol{U}^{[2,2]}
\end{bmatrix}
\Join
\begin{bmatrix}
    \boldsymbol{V}^{[1,1]} & \boldsymbol{V}^{[1,2]}\\
    \boldsymbol{V}^{[2,1]} & \boldsymbol{V}^{[2,2]}
\end{bmatrix}
=
\begin{bmatrix}
    \boldsymbol{U}^{[1,1]} \otimes \boldsymbol{V}^{[1,1]} + \boldsymbol{U}^{[1,2]}\otimes \boldsymbol{V}^{[2,1]} & \boldsymbol{U}^{[1,1]}\otimes \boldsymbol{V}^{[1,2]} + \boldsymbol{U}^{[1,2]} \otimes \boldsymbol{V}^{[2,2]}\\
    \boldsymbol{U}^{[2,1]} \otimes \boldsymbol{V}^{[1,1]} + \boldsymbol{U}^{[2,2]} \otimes \boldsymbol{V}^{[2,1]} & \boldsymbol{U}^{[2,1]} \otimes \boldsymbol{V}^{[1,2]} + \boldsymbol{U}^{[2,2]}\otimes \boldsymbol{V}^{[2,2]}
\end{bmatrix}.
$$

As one can easily verify, decompositions~\eqref{QTT} and \eqref{QTT-mat} can be recast by using the strong Kronecker product as follows:
\begin{equation*}
    \boldsymbol{w} = [\boldsymbol{w}] = \boldsymbol{W}_1 \Join \cdots \Join \boldsymbol{W}_L, \qquad \boldsymbol{A} = [\boldsymbol{A}] = \boldsymbol{A}_1 \Join \cdots \Join \boldsymbol{A}_L,
\end{equation*}
where, with the first equalities, we indicate that any array of size $m \times n$ can be identified with a core of rank $1 \times 1 $ and mode size $m \times n$. 

\begin{remark}
Writing the QTT decomposition using the strong Kronecker product of cores highlights the deep connection between QTT representations and low-rank Kronecker decompositions of matrices, as we will see in Example~\ref{ex:stiffness_Dir}. This link is highly exploited in our computations to provide exact and explicit low-rank QTT decompositions of wave operators, where by explicit we mean that all the cores involved are provided in closed form.
\end{remark}

We conclude this section by recalling useful properties for matrices and vectors represented in the QTT format, which follow immediately using the definition of strong Kronecker product.

\begin{property}\label{QTT-properties}
    Let $\boldsymbol{A} := \boldsymbol{A}_1 \Join \cdots \Join \boldsymbol{A}_L$ and $\boldsymbol{B}:= \boldsymbol{B}_1 \Join \cdots \Join \boldsymbol{B}_L$ be two matrices of size $2^L \times 2^L$. Furthermore, let $\boldsymbol{w} := \boldsymbol{W}_1 \Join \cdots \Join \boldsymbol{W}_L$ be a $2^L$-component vector.
    \begin{itemize}
        \item The transpose $\boldsymbol{A}^T$ of $\boldsymbol{A}$ is given by the strong Kronecker product of the cores definining $\boldsymbol{A}$ with transposed blocks. Namely, $\boldsymbol{A}^T = \boldsymbol{A}^T_1 \Join \cdots \Join \boldsymbol{A}^T_L$, where, for each $\ell \in \{1,\ldots,L\}$, the entries of  $\boldsymbol{A}^T$ are defined as $\boldsymbol{A}^T_\ell[\alpha_{\ell-1},i_\ell,j_\ell,\alpha_\ell]:=\boldsymbol{A}_\ell[\alpha_{\ell-1},j_\ell,i_\ell,\alpha_\ell]$.
        \item For each $\alpha,\beta \in \mathbb{R}$, the linear combination $\alpha \boldsymbol{A}+\beta \boldsymbol{B}$ admits the following QTT decomposition:
        $$
        \alpha \boldsymbol{A}+\beta \boldsymbol{B} = 
        \begin{bmatrix}
            \boldsymbol{A}_1 & \boldsymbol{B}_1 
        \end{bmatrix}
        \Join
        \begin{bmatrix}
            \boldsymbol{A}_2 & \\ & \boldsymbol{B}_2 
        \end{bmatrix}
        \Join
        \cdots
        \Join
        \begin{bmatrix}
            \boldsymbol{A}_{\ell-1} & \\ & \boldsymbol{B}_{\ell-1} 
        \end{bmatrix}
        \Join
        \begin{bmatrix}
            \alpha\boldsymbol{A}_\ell\\
            \beta\boldsymbol{B}_\ell 
        \end{bmatrix}.
        $$
        \item The tensor product $\boldsymbol{A} \otimes \boldsymbol{B}$ of $\boldsymbol{A}$ and $\boldsymbol{B}$ is given by $\boldsymbol{A} \otimes \boldsymbol{B} =  \boldsymbol{A}_1 \Join \cdots \Join \boldsymbol{A}_L \Join \boldsymbol{B}_1 \Join \cdots \Join \boldsymbol{B}_L$. 
        \item The diagonalization $\mathcal{D}(\boldsymbol{w})$ of $\boldsymbol{w}$ admits a QTT decomposition with blocks given by the diagonalization of the blocks of $\boldsymbol{w}$. Namely, we have the representation $\mathcal{D}(\boldsymbol{w}) = \boldsymbol{D}_1 \Join \cdots \Join \boldsymbol{D}_L$, where, for $\ell \in \{1,\ldots,L\}$, each core $\boldsymbol{D}_\ell$ is defined through the blocks of $\boldsymbol{W}_\ell$ as $\boldsymbol{D}_\ell[\alpha_{\ell-1},i_\ell,j_\ell,\alpha_\ell]:=\delta_{i_\ell j_\ell}\boldsymbol{W}^{[\alpha_{\ell-1},\alpha_\ell]}[i_\ell]$, where $\delta_{i_\ell j_\ell}$ is the Kronecker delta.
        \item The matrix-by-matrix product $\boldsymbol{A} \boldsymbol{B}$ of $\boldsymbol{A}$ and $\boldsymbol{B}$ admits a QTT decomposition $\boldsymbol{A} \boldsymbol{B}= \boldsymbol{C}_1 \Join \cdots \Join \boldsymbol{C}_L $ with blocks $\boldsymbol{C}^{[\overline{\alpha_{\ell-1}\beta_{\ell-1}},\overline{\alpha_{\ell}\beta_{\ell}}]}_\ell := \boldsymbol{A}^{[\alpha_{\ell-1},\alpha_\ell]}_\ell \boldsymbol{B}^{[\beta_{\ell-1},\beta_\ell]}_\ell$, where the lexicographic ordering is used for the indices identifying the blocks of the product $\boldsymbol{A} \boldsymbol{B}$.
        \item The Hadamard product $\boldsymbol{A}\odot\boldsymbol{B}$ of $\boldsymbol{A}$ and $\boldsymbol{B}$ admits a QTT decomposition $\boldsymbol{A}\odot\boldsymbol{B} = \boldsymbol{C}_1 \Join \cdots \Join \boldsymbol{C}_L $ with blocks $\boldsymbol{C}^{[\overline{\alpha_{\ell-1}\beta_{\ell-1}},\overline{\alpha_{\ell}\beta_{\ell}}]}_\ell := \boldsymbol{A}^{[\alpha_{\ell-1},\alpha_\ell]}_\ell \odot \boldsymbol{B}^{[\beta_{\ell-1},\beta_\ell]}_\ell$, where the lexicographic ordering is used for the indices identifying the blocks of the elementwise product $\boldsymbol{A} \odot \boldsymbol{B}$.
    \end{itemize} 
\end{property}

\subsection{Quasi-optimality and computation of QTT decompositions}
The link between the QTT decomposition and the low-rank factorization of matrices allows us to compute a TT representation of any array using an SVD-type algorithm~\cite[Algo. 1]{O2011}. Additionally, this TT representation is proved to be quasi-optimal~\cite[Cor. 2.4]{O2011}. Let us recall this quasi-optimality in the context of matrices.

\begin{proposition}[Quasi-optimality]
    Let $\boldsymbol{A} \in \R^{2^L \times 2^L}$ be a given matrix and $r_1,\ldots,r_{L-1}$ be given rank bounds. Then, the best QTT approximation $\boldsymbol{A}^{\text{best}}$ to $\boldsymbol{A}$ in the Frobenius norm with QTT ranks bounded by $r_1,\ldots,r_{L-1}$ always exists. Additionally, the TT-SVD algorithm~\cite[Algo. 1]{O2011} computes a QTT approximation $\widetilde{\boldsymbol{A}}$ to $\boldsymbol{A}$, with QTT ranks bounded by $r_1,\ldots,r_{L-1}$, satisfying the quasi-optimality estimate
    \begin{equation*}
        \|\boldsymbol{A}-\widetilde{\boldsymbol{A}}\|_F \leq \sqrt{L-1} \|\boldsymbol{A}-\boldsymbol{A}^{\text{best}}\|_F.
    \end{equation*}
\end{proposition}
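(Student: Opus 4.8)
The plan is to reduce the statement to the classical SVD-based quasi-optimality result for the TT decomposition, which is stated in the excerpt as \cite[Cor. 2.4]{O2011}, and then note that the QTT case is just the TT case applied after the fixed isometric quantization isomorphism. First I would unfold the matrix $\boldsymbol{A} \in \R^{2^L \times 2^L}$ into an $L$-th order tensor of mode sizes $2 \times \cdots \times 2$ (or, treating row and column indices jointly as in~\eqref{QTT-mat}, a tensor of mode sizes $4 \times \cdots \times 4$) by means of the binary quantization~\eqref{QTT-quant}. Since this reshaping is an isometric isomorphism, the Frobenius norm of a matrix equals the Euclidean/Frobenius norm of its quantized tensor, and a QTT decomposition of $\boldsymbol{A}$ with ranks $r_1,\dots,r_{L-1}$ is precisely a TT decomposition of the quantized tensor with the same ranks. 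Hence every statement about QTT approximations of $\boldsymbol{A}$ is equivalent to the corresponding statement about TT approximations of the quantized tensor, and it suffices to invoke the TT results.

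Next I would establish existence of the best QTT approximation $\boldsymbol{A}^{\mathrm{best}}$. The set of $L$-th order tensors whose TT ranks are bounded componentwise by $r_1,\dots,r_{L-1}$ is closed in $\R^{2\times\cdots\times 2}$ (it is the image of a polynomial parametrization by the cores subject to rank constraints, and a standard compactness argument along a minimizing sequence—normalizing the cores—shows the infimum of the Frobenius distance is attained). Equivalently one may cite the fact that bounded-TT-rank sets are closed, which is implicit in \cite[Cor. 2.4]{O2011}. Therefore a minimizer exists, proving the first assertion. Then, for the algorithmic bound, I would recall the structure of the TT-SVD algorithm~\cite[Algo. 1]{O2011}: it proceeds through $L-1$ successive matricizations (unfoldings) of the tensor, truncating each to rank $r_\ell$ via a singular value decomposition. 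A telescoping/triangle-inequality argument over these $L-1$ truncation steps—each of which discards, in Frobenius norm, no more than the best rank-$r_\ell$ truncation error of the corresponding unfolding, which in turn is bounded by $\|\boldsymbol{A}-\boldsymbol{A}^{\mathrm{best}}\|_F$—yields
\begin{equation*}
\|\boldsymbol{A}-\widetilde{\boldsymbol{A}}\|_F^2 \;\le\; \sum_{\ell=1}^{L-1} \varepsilon_\ell^2 \;\le\; (L-1)\,\|\boldsymbol{A}-\boldsymbol{A}^{\mathrm{best}}\|_F^2,
\end{equation*}
and taking square roots gives the claimed factor $\sqrt{L-1}$. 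This is exactly the content of \cite[Cor. 2.4]{O2011} transported through the quantization.

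The main obstacle, if one wanted a fully self-contained proof rather than a citation, would be the per-step error control: showing that the Frobenius error incurred when truncating the $\ell$-th unfolding inside the recursive sweep is dominated by the global best-approximation error $\|\boldsymbol{A}-\boldsymbol{A}^{\mathrm{best}}\|_F$, despite the fact that earlier truncations have already modified the tensor. The resolution uses the orthogonality invariant maintained by the algorithm—after processing cores $1,\dots,\ell-1$ one keeps an orthonormal ``left'' basis—so that the $\ell$-th truncation error equals the tail of the singular values of a matrix whose best rank-$r_\ell$ approximation error is no larger than that of the original $\ell$-th unfolding, and the latter is at most the distance from $\boldsymbol{A}$ to any tensor of TT rank $\le (r_1,\dots,r_{L-1})$, in particular to $\boldsymbol{A}^{\mathrm{best}}$. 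Since all of this is precisely \cite[Algo. 1]{O2011} and \cite[Cor. 2.4]{O2011}, for the purposes of this paper I would simply cite those results and remark that the quantization map~\eqref{QTT-quant} is an isometry, so the quasi-optimality transfers verbatim to the QTT setting for matrices in $\R^{2^L\times 2^L}$.
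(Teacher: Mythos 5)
Your proposal matches the paper's approach: the paper states this proposition as a direct recall of \cite[Cor. 2.4]{O2011} without providing its own proof, relying implicitly on the observation you make explicit, namely that the quantization~\eqref{QTT-quant} is an isometry so the TT-SVD quasi-optimality transfers verbatim, and your sketch of the telescoping argument over the $L-1$ SVD truncations correctly reproduces the content of the cited result. One small slip: a matrix in $\R^{2^L\times 2^L}$ has $2^{2L}$ entries, so it quantizes to an $L$-th order tensor of mode sizes $4\times\cdots\times 4$ (pairing one row bit and one column bit per core, as in~\eqref{QTT-mat}), not $2\times\cdots\times 2$ as your first sentence suggests; your parenthetical gives the correct interpretation, and it is precisely this $L$-core, $(L-1)$-rank structure that yields the factor $\sqrt{L-1}$.
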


Even if it is always possible to provide QTT approximations of matrices and vectors by using the TT-SVD algorithm, in our discretization we do not rely on this strategy, whose computational cost in terms of both floating point operations and memory requirements is exponential in $L$. Instead, we provide \emph{offline} explicit QTT decompositions of the finite element operators by exploiting their low-rank Kronecker structures. As such, we avoid storing their entrywise representations, drastically reducing the storage requirements, and we avoid an exponential number of floating point operations. 

We end this section by showing two examples of computations of QTT decompositions. In what follows, we use the following $2 \times 2$ matrices as QTT blocks of the QTT decompositions of the matrices under consideration:
\begin{equation*}
    \boldsymbol{I} := 
    \begin{pmatrix}
        1 & 0 \\
        0 & 1
    \end{pmatrix},
    \qquad
    \boldsymbol{I}_1 := 
    \begin{pmatrix}
        1 & 0\\
        0 & 0
    \end{pmatrix},
    \quad 
    \boldsymbol{I}_2 :=
    \begin{pmatrix}
        0 & 0\\
        0 & 1
    \end{pmatrix},
    \quad 
    \boldsymbol{J} := 
    \begin{pmatrix}
        0 & 1\\
        0 & 0
    \end{pmatrix}.
\end{equation*}
Furthermore, for a given matrix $\boldsymbol{A}$, and $k \in \N$, we denote by $\boldsymbol{A}^{\otimes k}$ 
the $k^{\text{th}}$ tensor power of $\boldsymbol{A}$. Similarly, 
we use this convention for the strong Kronecker product operation $\Join$.

\begin{example}\label{ex::Laplacian}
    Let $L \in \N$ be such that $L\geq 2$. We consider the negative Laplacian with homogeneous Dirichlet boundary conditions on $(0,1)$, discretized by piecewise linear finite elements on a uniform grid with $2^L$ interior nodes. Using the standard hat basis functions, and omitting the scaling factor depending on the uniform mesh size, the discrete Laplacian 
    \begin{equation*}
    \boldsymbol{\Delta}_{L} := 
    \begin{pmatrix}
        2 & -1 & & \\
        -1 & 2 & \ddots &  \\
           & \ddots & \ddots & -1 \\
           & & -1 & 2 \\
    \end{pmatrix}
    \end{equation*}
    admits the following QTT decomposition of $L$ factors and ranks $3$:
    \begin{equation}\label{QTT_Laplacian}
        \boldsymbol{\Delta}_L = 
        \begin{bmatrix}
            \boldsymbol{I} & \boldsymbol{J} & \boldsymbol{J}^T
        \end{bmatrix}
        \Join
        \begin{bmatrix}
            \boldsymbol{I} & \boldsymbol{J} & \boldsymbol{J}^T \\
            & \boldsymbol{J}^T & \\
            & & \boldsymbol{J}
        \end{bmatrix}^{\Join (L-2)}
        \Join 
        \begin{bmatrix}
            2\boldsymbol{I} - \boldsymbol{J} - \boldsymbol{J}^T\\
            - \boldsymbol{J}^T\\
            - \boldsymbol{J}
        \end{bmatrix}.
    \end{equation}
This decomposition can be provided by exploiting the tridiagonal Toeplitz band structure of $\boldsymbol{\Delta}_L$, see~\cite[Cor. 3.2]{KK2012}.
\end{example}

\begin{example}\label{ex:stiffness_Dir}
    Let $L \in \N$ be such that $L\geq 2$. Let $\widehat{\boldsymbol{A}}_L \in \R^{2^L \times 2^L}$ be the padded stiffness matrix in the $L^2$-normalized basis functions~\eqref{basis}, i.e., 
    \begin{equation*}
        \widehat{\boldsymbol{A}}_L = 2^{2L}
        \begin{pmatrix}
            2 & -1 & 0 & \cdots & 0\\
            -1 & 2 & \ddots & &\vdots\\
            0 & \ddots & \ddots & -1 & \vdots \\
            \vdots & & -1 & 2 & 0\\
            0 & \cdots & \cdots & 0 &0
        \end{pmatrix}.
    \end{equation*}
    Then, $\widehat{\boldsymbol{A}}_L$ admits the following QTT decomposition of $L$ factors and ranks $4$:
    \begin{equation*}
        \widehat{\boldsymbol{A}}_L = 
        \begin{bmatrix}
            \boldsymbol{I}_1 & \boldsymbol{I}_2 & \boldsymbol{J} & \boldsymbol{J}^T
        \end{bmatrix}
        \Join
        \begin{bmatrix}
            \boldsymbol{I} & & \boldsymbol{J} & \boldsymbol{J}^T \\
            \boldsymbol{I}_1 & \boldsymbol{I}_2 & \boldsymbol{J} & \boldsymbol{J}^T\\
            & & \boldsymbol{J}^T & \\
            & & & \boldsymbol{J}
        \end{bmatrix}^{\Join (L-2)}
        \Join 
        \begin{bmatrix}
            2^{2L+1} \boldsymbol{I} - 2^{2L}\boldsymbol{J} - 2^{2L}\boldsymbol{J}^T\\
            2^{2L+1} \boldsymbol{I}\\
            - 2^{2L}\boldsymbol{J}^T\\
            - 2^{2L}\boldsymbol{J}
        \end{bmatrix}.
    \end{equation*}
    As we mentioned earlier, this decomposition can be provided by exploiting the low-rank Kronecker structure of the matrix under consideration. Let us explain the details. Recalling that rescaling a QTT decomposition is equivalent to rescaling one of its factor, it is sufficient to provide a QTT representation of the rescaled matrix:
    \begin{equation*}
        \widetilde{\boldsymbol{A}}_L = 2^{-2L} \widehat{\boldsymbol{A}}_L.
    \end{equation*}
    As in Example~\ref{ex::Laplacian}, for $k \in \N$, we denote with $\boldsymbol{\Delta}_{k} \in \R^{2^k \times 2^k}$ the negative Laplacian with homogeneous Dirichlet boundary conditions on $(0,1)$, discretized by piecewise linear finite elements on a uniform grid with $2^k$ interior nodes.
    For $k \in \{2,\ldots,L\}$, we have the recursive block structure
    \begin{equation*}
        \begin{aligned}
            \widetilde{\boldsymbol{A}}_k &=
        \left( \begin{array}{c | c}
           \boldsymbol{\Delta}_{k-1}  & -\boldsymbol{J}^{T \otimes(k-1)} \\
           \hline
            -\boldsymbol{J}^{\otimes(k-1)} & \widetilde{\boldsymbol{A}}_{k-1}
        \end{array} \right)\\
        &= \boldsymbol{I}_1 \otimes \boldsymbol{\Delta}_{k-1} + \boldsymbol{I}_2 \otimes \widetilde{\boldsymbol{A}}_{k-1} + \boldsymbol{J} \otimes \left(-\boldsymbol{J}^{T \otimes(k-1)} \right) + \boldsymbol{J}^T \otimes \left(-\boldsymbol{J}^{ \otimes(k-1)} \right),
        \end{aligned}
    \end{equation*}
which can be recast by using the strong Kronecker product as follows:
\begin{equation}\label{Ak}
    \widetilde{\boldsymbol{A}}_k = 
        \begin{bmatrix}
            \boldsymbol{I}_1 & \boldsymbol{I}_2 & \boldsymbol{J} & \boldsymbol{J}^T
        \end{bmatrix}
        \Join
        \begin{bmatrix}
            \boldsymbol{\Delta}_{k-1} \\
            \widetilde{\boldsymbol{A}}_{k-1}\\
            -\boldsymbol{J}^{T \otimes(k-1)}\\
            -\boldsymbol{J}^{ \otimes(k-1)}
        \end{bmatrix}.
\end{equation}
By using this representation with the equalities $-\boldsymbol{J}^{T \otimes k} = \boldsymbol{J}^T \otimes \left(-\boldsymbol{J}^{T \otimes(k-1)} \right)$, $-\boldsymbol{J}^{ \otimes k} = \boldsymbol{J} \otimes \left(-\boldsymbol{J}^{\otimes(k-1)} \right)$, and QTT decomposition~\eqref{QTT_Laplacian}, we obtain
\begin{equation}\label{dec}
    \begin{bmatrix}
            \boldsymbol{\Delta}_{k} \\
            \widetilde{\boldsymbol{A}}_{k}\\
            -\boldsymbol{J}^{T \otimes k}\\
            -\boldsymbol{J}^{ \otimes k}
        \end{bmatrix}
        = 
        \begin{bmatrix}
            \boldsymbol{I} & & \boldsymbol{J} & \boldsymbol{J}^T \\
            \boldsymbol{I}_1 & \boldsymbol{I}_2 & \boldsymbol{J} & \boldsymbol{J}^T\\
            & & \boldsymbol{J}^T & \\
            & & & \boldsymbol{J}
        \end{bmatrix}
        \Join
        \begin{bmatrix}
            \boldsymbol{\Delta}_{k-1} \\
            \widetilde{\boldsymbol{A}}_{k-1}\\
            -\boldsymbol{J}^{T \otimes(k-1)}\\
            -\boldsymbol{J}^{ \otimes(k-1)}
        \end{bmatrix}
\end{equation}
for $k \in \{2,\ldots,L\}$. Then, by taking $k = L$ in~\eqref{Ak}, and by applying decomposition~\eqref{dec} to itself recursively, we get the following representation:
\begin{equation*}
    \begin{aligned}
    \widehat{\boldsymbol{A}}_L &= 
    \begin{bmatrix}
            \boldsymbol{I}_1 & \boldsymbol{I}_2 & \boldsymbol{J} & \boldsymbol{J}^T
        \end{bmatrix}
        \Join
        \begin{bmatrix}
            \boldsymbol{\Delta}_{L-1} \\
            \widetilde{\boldsymbol{A}}_{L-1}\\
            -\boldsymbol{J}^{T \otimes(L-1)}\\
            -\boldsymbol{J}^{ \otimes(L-1)}
        \end{bmatrix}\\
        &=  \begin{bmatrix}
            \boldsymbol{I}_1 & \boldsymbol{I}_2 & \boldsymbol{J} & \boldsymbol{J}^T
        \end{bmatrix}
        \Join
        \begin{bmatrix}
            \boldsymbol{I} & & \boldsymbol{J} & \boldsymbol{J}^T \\
            \boldsymbol{I}_1 & \boldsymbol{I}_2 & \boldsymbol{J} & \boldsymbol{J}^T\\
            & & \boldsymbol{J}^T & \\
            & & & \boldsymbol{J}
        \end{bmatrix}
        \Join
        \begin{bmatrix}
            \boldsymbol{\Delta}_{L-2} \\
            \widetilde{\boldsymbol{A}}_{L-2}\\
            -\boldsymbol{J}^{T \otimes(L-2)}\\
            -\boldsymbol{J}^{ \otimes(L-2)}
        \end{bmatrix}\\
        &=
        \begin{bmatrix}
            \boldsymbol{I}_1 & \boldsymbol{I}_2 & \boldsymbol{J} & \boldsymbol{J}^T
        \end{bmatrix}
        \Join 
        \begin{bmatrix}
            \boldsymbol{I} & & \boldsymbol{J} & \boldsymbol{J}^T \\
            \boldsymbol{I}_1 & \boldsymbol{I}_2 & \boldsymbol{J} & \boldsymbol{J}^T\\
            & & \boldsymbol{J}^T & \\
            & & & \boldsymbol{J}
        \end{bmatrix}
        \Join
        \begin{bmatrix}
            \boldsymbol{I} & & \boldsymbol{J} & \boldsymbol{J}^T \\
            \boldsymbol{I}_1 & \boldsymbol{I}_2 & \boldsymbol{J} & \boldsymbol{J}^T\\
            & & \boldsymbol{J}^T & \\
            & & & \boldsymbol{J}
        \end{bmatrix}
        \begin{bmatrix}
            \boldsymbol{\Delta}_{L-3} \\
            \widetilde{\boldsymbol{A}}_{L-3}\\
            -\boldsymbol{J}^{T \otimes(L-3)}\\
            -\boldsymbol{J}^{ \otimes(L-3)}
        \end{bmatrix}\\
        &= \ldots = 
        \begin{bmatrix}
            \boldsymbol{I}_1 & \boldsymbol{I}_2 & \boldsymbol{J} & \boldsymbol{J}^T
        \end{bmatrix}
        \Join 
        \begin{bmatrix}
            \boldsymbol{I} & & \boldsymbol{J} & \boldsymbol{J}^T \\
            \boldsymbol{I}_1 & \boldsymbol{I}_2 & \boldsymbol{J} & \boldsymbol{J}^T\\
            & & \boldsymbol{J}^T & \\
            & & & \boldsymbol{J}
        \end{bmatrix}^{\Join (L-2)}
        \Join
        \begin{bmatrix}
            \boldsymbol{\Delta}_{1} \\
            \widetilde{\boldsymbol{A}}_{1}\\
            -\boldsymbol{J}^{T }\\
            -\boldsymbol{J}
        \end{bmatrix}.
    \end{aligned}
\end{equation*}
We can conclude observing that $\boldsymbol{\Delta}_{1} = 2\boldsymbol{I}-\boldsymbol{J}-\boldsymbol{J}^T$, and $\widetilde{\boldsymbol{A}}_{1} = 2 \boldsymbol{I}_1$.
\end{example}

\section{Solving the GLRK--FEM in the QTT format }\label{sec:tt in space}
In this section, we describe how to compress and solve the iterations presented in Algorithm~\ref{alg:GLRK-FEM} using the QTT decomposition. 

To work in the QTT format, from now on we properly pad with zeros all the matrices and vectors associated with the space discretization so that their sizes are powers of $2$. This means that, letting $L \in \N$ be the number of refinement levels in space, and recalling that the effective number of degrees of freedom is $N_L = 2^L-1$, we now consider $N_L+1 = 2^L$ entries for both the position and velocity variables and force their last entries to be equal to zero.

\subsection{Tensor-structured approximation of the initial data}\label{subsec:initdata}
Let $L \in \N$ be the fixed number of levels in space. Let $\{\widehat{\varphi}_{L,j}\}_{j=1}^{2^L}$ be the $L^2$-normalized nodal basis of hat functions of the space $\{w_L \in \mathcal{S}(\mathcal{T}_L): w_L(0) = 0\}$. These basis functions coincide with the ones defined in~\eqref{basis}, with the additional hat function $\widehat{\varphi}_{L,L}$ corresponding to the endpoint $x_{L,L} = 1$. Furthermore, let us denote with $\widehat{\boldsymbol{f}}_L \in \R^{2^L}$ and $\widehat{\boldsymbol{g}}_L \in \R^{2^L}$ the vectors whose entries are defined by the following inner products:
\begin{equation}\label{RHS}
    \widehat{\boldsymbol{f}}_L[j] := \int_{\Omega} u'_0(x) \hspace{0.05cm} \widehat{\varphi}'_{L,j}(x) \ dx, \qquad \widehat{\boldsymbol{g}}_L[j] := \int_{\Omega} v_0(x) \hspace{0.05cm} \widehat{\varphi}_{L,j}(x) \ dx, 
\end{equation}
for $j \in \{1,\ldots,2^L\}$. As we have discussed in Section~\ref{sec:semidiscrete} and Remark~\ref{FP}, the discrete initial data for position and velocity are defined, respectively, by the elliptic projection~\eqref{ell_proj} and $L^2$ projection~\eqref{l2_proj} of the continuous initial data. As such, the vector representation $\boldsymbol{u}_{L,0} \in \R^{2^L}$ of the discrete initial position in the BPX basis~\eqref{bpx_basis} is QTT-compressed by solving the following uniformly well-conditioned system in the QTT format:
\begin{equation}\label{lin_syst_u0}
    \begin{pmatrix}
        \boldsymbol{C}_L\widehat{\boldsymbol{A}}_L\boldsymbol{C}_L & \\
        & 0
    \end{pmatrix} 
    \boldsymbol{u}_{L,0} = 
    \begin{pmatrix}
        \boldsymbol{C}_L & \\
        & 0
    \end{pmatrix}
    \widehat{\boldsymbol{f}}_L.
\end{equation}
As in~\cite{BK2020}, exact and explicit QTT decompositions, with QTT ranks independent of $L$, can be derived for the system matrix and the padded BPX preconditioner. The vector representation $\widehat{\boldsymbol{v}}_{L,0} \in \R^{2^L}$ of the discrete initial velocity in the $L^2$-normalized basis~\eqref{basis} is QTT-compressed by computing the following matrix-by-vector product in the QTT format:
\begin{equation}\label{linear_syst_v0}
    \widehat{\boldsymbol{v}}_{L,0} =
    \begin{pmatrix}
        \widehat{\boldsymbol{M}}^{-1}_L & \\
        & 0 
    \end{pmatrix}
    \widehat{\boldsymbol{g}}_L,
\end{equation}
where an exact and explicit QTT decomposition of the padded inverse mass matrix with QTT ranks up to $14$ can be obtained using the Sherman--Morrison--Woodbury formula~\cite[Pag. 65]{GVL2013} similarly to as in~\cite{KK2012}. Therefore, the remaining ingredients for providing low-rank QTT approximations of $\boldsymbol{u}_{L,0}$ and $\widehat{\boldsymbol{v}}_{L,0}$ are, respectively, low-rank QTT approximations of the inner products $\widehat{\boldsymbol{f}}_L$ and $\widehat{\boldsymbol{g}}_L$ defined in~\eqref{RHS}, to the construction of which the remainder of this section is devoted. 

As a first step, we provide low-rank approximations of the weak derivative of the initial position $u'_0 \in L^2(\Omega)$, and the initial velocity $v_0 \in L^2(\Omega)$. Let $\widetilde{L} \in \N$ be a fixed number of space levels, and let $\mathcal{T}_{\widetilde{L}}$ be the corresponding mesh on $\Omega$. Let us introduce the space $\mathcal{S}_0(\mathcal{T}_{\widetilde{L}})$ of functions that are piecewise constants on $\mathcal{T}_{\widetilde{L}}$, i.e., 
\begin{equation}\label{piecewise_const}
    \mathcal{S}_0(\mathcal{T}_{\widetilde{L}}) := \{w_{\widetilde{L}} \in L^2(\Omega): w_{\widetilde{L}|K_{\widetilde{L},i}} \in \mathbb{P}_0(K_{\widetilde{L},i}) \ \forall K_{\widetilde{L},i} \in \mathcal{T}_{\widetilde{L}} \}.
\end{equation}
Furthermore, let $\Pi_{\widetilde{L},0}: L^2(\Omega)\rightarrow \mathcal{S}_0(\mathcal{T}_{\widetilde{L}})  $ be the $L^2$ projection through which we approximate $u'_0$ and $v_0$. For a given $w \in L^2(\Omega)$, denoting with $\boldsymbol{\Pi}^{w}_{\widetilde{L},0} \in \R^{2^{\widetilde{L}}}$ the vector representing its $L^2$ projection $\Pi_{\widetilde{L},0}(w) \in \mathcal{S}_0(\mathcal{T}_{\widetilde{L}})$ in the characteristic basis, 
    it holds that \begin{equation}\label{L2_proj_const}
        \boldsymbol{\Pi}^{w}_{\widetilde{L},0}[i] := 2^{\widetilde{L}}\int_{K_{\widetilde{L},i}} w(x) \ dx, \qquad i \in \{1,\ldots,2^{\widetilde{L}}\}.
    \end{equation}
     The following result details sufficient conditions for having low-rank QTT representations of $\boldsymbol{\Pi}^{u'_0}_{\widetilde{L},0} \in \R^{2^{\widetilde{L}}}$ and $\boldsymbol{\Pi}^{v_0}_{\widetilde{L},0} \in \R^{2^{\widetilde{L}}}$.
\begin{theorem}\label{th::low_rank_functions}
    For a fixed number of levels $\widetilde{L} \in \N$, and $w \in L^2(\Omega)$, let $\boldsymbol{\Pi}^{w}_{\widetilde{L},0} \in \R^{2^{\widetilde{L}}}$ be the vector defined in~\eqref{L2_proj_const}. Recall that~$\Omega:=(0,1)$, fix the reference domain $D :=(-1,1)$, and define the affine functions $\phi_0: D \rightarrow \Omega$ and $\phi_{\ell,i_\ell}: D \rightarrow D$, with $i_\ell \in \{1,2\}$ and $\ell \in \{1,\ldots,\widetilde{L}\}$, such that
    \begin{equation*}
        \phi_0(s) := \frac{s+1}{2}, \quad \phi_{\ell,1}(s) := \frac{s-1}{2}, \quad \phi_{\ell,2}(s) := \frac{s+1}{2}, \qquad s \in D.
    \end{equation*}
    Let the following assumptions be satisfied:
    \begin{itemize}
        \item [(i)] There exists a finite-dimensional space $\mathcal{W}_0 \subset L^2(D)$ such that $w \circ \phi_0 \in \mathcal{W}_0$.
        \item [(ii)] There exists a sequence of finite-dimensional spaces $\mathcal{W}_1,\ldots,\mathcal{W}_{\widetilde{L}-1}$ such that, for each $\ell \in \{1,\ldots,\widetilde{L}-1\}$, and $i_\ell \in \{1,2\}$, the following inclusion holds true:
        $$
        \{z \circ \phi_{\ell,i_\ell}: z \in \mathcal{W}_{\ell-1} \} \subseteq \mathcal{W}_{\ell}.
        $$
    \end{itemize}
    Then, $\boldsymbol{\Pi}^{w}_{\widetilde{L},0}$ admits a QTT decomposition with $\widetilde{L}+1$ cores, and each QTT rank $r_\ell$ bounded by the dimension of $\mathcal{W}_{\ell-1}$, for $\ell \in \{1,\ldots,\widetilde{L}\}$.
\end{theorem}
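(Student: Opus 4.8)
The plan is to exploit the self-similar dyadic structure of $\mathcal{T}_{\widetilde{L}}$ together with the nestedness hypotheses~(i)--(ii): these isolate the dependence on $w$ in a single leading core, while all remaining cores become ``transfer matrices'' that depend only on the scale index $\ell$ and on the reference maps $\phi_{\ell,\cdot}$. First I would fix the lexicographic identification $i \leftrightarrow (i_1,\ldots,i_{\widetilde{L}}) \in \{1,2\}^{\widetilde{L}}$ from~\eqref{QTT-quant} and verify the elementary geometric identity
\begin{equation*}
    K_{\widetilde{L},i} = \bigl(\phi_0 \circ \phi_{1,i_1} \circ \cdots \circ \phi_{\widetilde{L},i_{\widetilde{L}}}\bigr)(D),
\end{equation*}
i.e., that selecting the left ($i_\ell = 1$) or right ($i_\ell = 2$) dyadic half at each scale $\ell$ reproduces the binary expansion of $i-1$. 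Since all the maps involved are affine and the composition $\Phi_i := \phi_0 \circ \phi_{1,i_1} \circ \cdots \circ \phi_{\widetilde{L},i_{\widetilde{L}}} : D \to K_{\widetilde{L},i}$ has constant Jacobian $|K_{\widetilde{L},i}| / |D| = 2^{-\widetilde{L}-1}$, the change of variables $x = \Phi_i(s)$ in~\eqref{L2_proj_const} gives
\begin{equation*}
    \boldsymbol{\Pi}^{w}_{\widetilde{L},0}[i] = 2^{\widetilde{L}} \int_{K_{\widetilde{L},i}} w(x) \, dx = \frac{1}{2} \int_D \bigl( (w \circ \phi_0) \circ \phi_{1,i_1} \circ \cdots \circ \phi_{\widetilde{L},i_{\widetilde{L}}} \bigr)(s) \, ds .
\end{equation*}
The only mildly delicate point so far is the index bookkeeping in the first identity; the rest is a routine affine computation.

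Next I would propagate the function $g_0 := w \circ \phi_0$ across the scales. By~(i), $g_0 \in \mathcal{W}_0$; I fix a basis $\{\theta^{(\ell)}_k\}_{k=1}^{\dim\mathcal{W}_\ell}$ of $\mathcal{W}_\ell$ for $\ell \in \{0,\ldots,\widetilde{L}-1\}$ and write $g_0 = \sum_k \boldsymbol{a}[k] \, \theta^{(0)}_k$, so that the vector $\boldsymbol{a} \in \R^{\dim\mathcal{W}_0}$ carries all the dependence on $w$. By~(ii), for every $\ell \in \{1,\ldots,\widetilde{L}-1\}$ and $j \in \{1,2\}$ one has $\theta^{(\ell-1)}_k \circ \phi_{\ell,j} \in \mathcal{W}_\ell$, hence there exist matrices $\boldsymbol{T}_{\ell,j} \in \R^{\dim\mathcal{W}_{\ell-1} \times \dim\mathcal{W}_\ell}$, depending only on $\ell$ and $j$, such that $\theta^{(\ell-1)}_k \circ \phi_{\ell,j} = \sum_m \boldsymbol{T}_{\ell,j}[k,m] \, \theta^{(\ell)}_m$. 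Iterating the composition and re-expanding in the bases at each step yields
\begin{equation*}
    (w \circ \phi_0) \circ \phi_{1,i_1} \circ \cdots \circ \phi_{\widetilde{L}-1,i_{\widetilde{L}-1}} = \sum_m \bigl( \boldsymbol{a}^T \boldsymbol{T}_{1,i_1} \cdots \boldsymbol{T}_{\widetilde{L}-1,i_{\widetilde{L}-1}} \bigr)[m] \, \theta^{(\widetilde{L}-1)}_m .
\end{equation*}
Composing once more with $\phi_{\widetilde{L},i_{\widetilde{L}}}$, integrating over $D$, and setting $\boldsymbol{b}_j[m] := \tfrac{1}{2} \int_D \bigl( \theta^{(\widetilde{L}-1)}_m \circ \phi_{\widetilde{L},j} \bigr)(s) \, ds$ for $j \in \{1,2\}$ (a vector in $\R^{\dim\mathcal{W}_{\widetilde{L}-1}}$, well defined because $\mathcal{W}_{\widetilde{L}-1} \subset L^2(D) \subset L^1(D)$), I arrive at the matrix-product expression
\begin{equation*}
    \boldsymbol{\Pi}^{w}_{\widetilde{L},0}[i] = \boldsymbol{a}^T \, \boldsymbol{T}_{1,i_1} \, \boldsymbol{T}_{2,i_2} \cdots \boldsymbol{T}_{\widetilde{L}-1,i_{\widetilde{L}-1}} \, \boldsymbol{b}_{i_{\widetilde{L}}}, \qquad (i_1,\ldots,i_{\widetilde{L}}) \in \{1,2\}^{\widetilde{L}} .
\end{equation*}
Observe that~(ii) is invoked only up to level $\widetilde{L}-1$: the last factor requires no nesting, just integration, which is exactly why the rank bound involves $\mathcal{W}_0,\ldots,\mathcal{W}_{\widetilde{L}-1}$ and not $\mathcal{W}_{\widetilde{L}}$.

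It remains to read off the cores. The displayed product is exactly a QTT decomposition: $\boldsymbol{a}$ is a leading core of rank $1 \times \dim\mathcal{W}_0$ carrying a trivial, size-one mode; for each $\ell \in \{1,\ldots,\widetilde{L}-1\}$, stacking $\boldsymbol{T}_{\ell,1}$ and $\boldsymbol{T}_{\ell,2}$ along the mode index $i_\ell$ gives a core of rank $\dim\mathcal{W}_{\ell-1} \times \dim\mathcal{W}_\ell$; and stacking $\boldsymbol{b}_1, \boldsymbol{b}_2$ along $i_{\widetilde{L}}$ gives the final core, of rank $\dim\mathcal{W}_{\widetilde{L}-1} \times 1$. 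Composing these $\widetilde{L}+1$ cores with the strong Kronecker product reproduces $\boldsymbol{\Pi}^{w}_{\widetilde{L},0}$ and has internal ranks $r_1, \ldots, r_{\widetilde{L}}$ with $r_\ell \le \dim\mathcal{W}_{\ell-1}$, as claimed. The conceptual core of the argument is the $w$-independence of the transfer matrices $\boldsymbol{T}_{\ell,j}$ --- exactly what hypothesis~(ii) provides --- and the one place where care is genuinely needed is verifying the dyadic index identity and the reference-map composition in the first step.
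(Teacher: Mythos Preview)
Your proof is correct and follows essentially the same approach as the paper: both establish the dyadic parametrisation $\boldsymbol{\Pi}^{w}_{\widetilde{L},0}[i] = \tfrac{1}{2}\int_D (w\circ\phi_0\circ\phi_{1,i_1}\circ\cdots\circ\phi_{\widetilde{L},i_{\widetilde{L}}})(s)\,ds$, expand $w\circ\phi_0$ in a basis of $\mathcal{W}_0$, propagate via the transfer matrices guaranteed by~(ii), and absorb the last composition-plus-integral into the terminal core. The only cosmetic difference is that the paper packages the integral as the $L^2$ projection $\pi_{\widetilde{L},0}$ onto constants on $D$, whereas you write it out directly.
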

\begin{proof}
    For notation convenience, for each $\ell \in \{1,\ldots,\widetilde{L}\}$, let us define the affine transformation
    \begin{equation}\label{aff_transf}
        \Phi_{\ell,i} := \phi_0 \circ \phi_{1,i_1} \circ \cdots \circ \phi_{\ell,i_\ell},
    \end{equation}
    for $i_1,\ldots,i_\ell \in \{1,2\}$, with the lexicographic ordering:
    \begin{equation*}
        i := \overline{i_1,\ldots,i_\ell} := 1 + \sum_{k=1}^{\ell} 2^{\ell-k}(i_k-1) \in \{1,\ldots,2^\ell\}.
    \end{equation*}
    Let $\pi_{\widetilde{L},0}: L^2(D) \rightarrow \mathbb{P}_0(D)$ be the $L^2$ projection onto the constant functions on $D$. A straightforward computation shows that 
    \begin{equation}\label{proj_zoom}
        \boldsymbol{\Pi}^{w}_{\widetilde{L},0}[i] = \pi_{\widetilde{L},0}(w \circ \Phi_{\widetilde{L},i}),
    \end{equation}
    for each $i \in \{1,\ldots,2^{\widetilde{L}}\}$. Thus, from now on, we work on~$\pi_{\widetilde{L},0}(w \circ \Phi_{\widetilde{L},i})$. For each $\ell \in \{0,\ldots,\widetilde{L}-1\}$, let $\{\psi_{\ell,\alpha_\ell}\}_{\alpha_\ell = 1}^{r_\ell}$ denote a basis of $\mathcal{W}_\ell$. As a result of \emph{(i)}, there exists an array $\boldsymbol{W}_0 \in \R^{1\times 1 \times r_0}$ such that:
    \begin{equation*}
        w \circ \phi_0 = \sum_{\alpha_0 = 1}^{r_0} \boldsymbol{W}_0[1,1,\alpha_0] \psi_{0,\alpha_0}.
    \end{equation*}
    Furthermore, from \emph{(ii)}, we know that, for each $\ell \in \{1,\ldots,\widetilde{L}-1\}$, there exists an array $\boldsymbol{W}_\ell \in \R^{r_{\ell-1} \times 2 \times r_\ell}$ such that
    \begin{equation*}
        \psi_{\ell-1,\alpha_{\ell-1}} \circ \phi_{\ell,i_\ell} = \sum_{\alpha_\ell = 1 }^{r_\ell} \boldsymbol{W}_\ell[\alpha_{\ell-1},i_\ell,\alpha_\ell] \psi_{\ell,\alpha_\ell},
    \end{equation*}
    for each $\alpha_{\ell-1} \in \{1,\ldots,r_{\ell-1}\}$, and each $i_\ell \in \{1,2\}$. Therefore, proceeding recursively, we get the following representation:
    \begin{equation*}
        w \circ \Phi_{\widetilde{L}-1,i_{\widetilde{L}-1}} = \sum_{\alpha_0 = 1}^{r_0} \sum_{\alpha_1 = 1}^{r_1} \ldots \sum_{\alpha_{\widetilde{L}-1}=1}^{r_{\widetilde{L}-1}} \boldsymbol{W}_0[1,1,\alpha_0] \boldsymbol{W}_1[\alpha_0,i_1,\alpha_1] \cdots \boldsymbol{W}_{\widetilde{L}-1}[\alpha_{\widetilde{L}-2},i_{\widetilde{L}-1},\alpha_{\widetilde{L}-1}] \psi_{\widetilde{L}-1,\alpha_{\widetilde{L}-1}}.
    \end{equation*}
    Using the definition of the projection $\pi_{\widetilde{L},0}$, there exists an array $\boldsymbol{W}_{\widetilde{L}} \in \R^{r_{\widetilde{L}-1} \times 2}$ such that
    \begin{equation*}
        \pi_{\widetilde{L},0}(\psi_{\widetilde{L}-1,\alpha_{\widetilde{L}-1}} \circ \phi_{\widetilde{L},i_{\widetilde{L}}}) = \boldsymbol{W}_{\widetilde{L}}[\alpha_{\widetilde{L}-1},i_{\widetilde{L}}]
    \end{equation*}
    for each $\alpha_{\widetilde{L}-1} \in \{1,\ldots,r_{\widetilde{L}-1}\}$ and $i_{\widetilde{L}} \in \{1,2\}$. As a result of this fact, and the linearity of $\pi_{\widetilde{L},0}$, we get the following representation:
    \begin{equation*}
        \pi_{\widetilde{L},0}(w \circ \Phi_{\widetilde{L},i}) = \sum_{\alpha_0 = 1}^{r_0} \sum_{\alpha_1 = 1}^{r_1} \ldots \sum_{\alpha_{\widetilde{L}-1}=1}^{r_{\widetilde{L}-1}} \boldsymbol{W}_0[1,1,\alpha_0] \boldsymbol{W}_1[\alpha_0,i_1,\alpha_1] \cdots \boldsymbol{W}_{\widetilde{L}-1}[\alpha_{\widetilde{L}-2},i_{\widetilde{L}-1},\alpha_{\widetilde{L}-1}] \boldsymbol{W}_{\widetilde{L}}[\alpha_{\widetilde{L}-1},i_{\widetilde{L}}].
    \end{equation*}
    Recalling equality~\eqref{proj_zoom}, the statement is proved.
\end{proof}

\begin{example}\label{example:low_rank_functions}
The following two families of functions satisfy the assumptions of Theorem~\ref{th::low_rank_functions}.
    \begin{itemize}
        \item [(1)] Let $n \in \N$. Each function $w \in \mathbb{P}_n(\Omega)$ satisfies Theorem~\ref{th::low_rank_functions} with QTT ranks bounded by $n+1$.
        \item [(2)] Let $a,b,\kappa \in \R$. Each trigonometric function given by $w(x) = a\cos(2\pi\kappa x) + b\sin(2\pi \kappa x) $, with $x \in \Omega$, satisfies Theorem~\ref{th::low_rank_functions} with QTT ranks bounded by $2$.
    \end{itemize}
\end{example}

In what follows, we show how to exploit Theorem~\ref{th::low_rank_functions} to provide low-rank QTT representations of the inner products in~\eqref{RHS}. 

For $\widetilde{L} \in \N$ such that $\widetilde{L} \geq L$, let us introduce the following matrices:
\begin{subequations}\label{zoom_basis_full}
    \begin{align}
    \widehat{\boldsymbol{D}'}_{\widetilde{L}} &:= 2^{\frac{3}{2}\widetilde{L}}
    \begin{pmatrix}
        1 & & & \\
        -1 & \ddots & & \\
           & \ddots & \ddots & \\
           &        & -1     & 1
    \end{pmatrix} \in \R^{2^{\widetilde{L}} \times 2^{\widetilde{L}}},\label{zoom_basis_full_D'}\\
    \widehat{\boldsymbol{D}}_{\widetilde{L}} &:= 2^{\frac{1}{2}\widetilde{L}-1} 
    \begin{pmatrix}
        1 & & & \\
        1 & \ddots & & \\
           & \ddots & \ddots & \\
           &        & 1     & 1
    \end{pmatrix} 
    \otimes 
    \begin{pmatrix}
        1\\
        0
    \end{pmatrix}
    + 2^{\frac{1}{2}\widetilde{L}-1} 
    \begin{pmatrix}
        1 & & & \\
        -1 & \ddots & & \\
           & \ddots & \ddots & \\
           &        & -1     & 1
    \end{pmatrix} 
    \otimes
    \begin{pmatrix}
        0\\
        1
    \end{pmatrix} \in \R^{2^{\widetilde{L}+1} \times 2^{\widetilde{L}}},\label{zoom_basis_full_D}
    \end{align}
\end{subequations}
which admit the following QTT decompositions:
\begin{subequations}\label{zoom_basis}
    \begin{align}
        \widehat{\boldsymbol{D}'}_{\widetilde{L}} &= 2^{\frac{3}{2}\widetilde{L}} 
        \begin{bmatrix}
            1 & 0 
        \end{bmatrix}
        \Join 
        \begin{bmatrix}
            \boldsymbol{I} & \boldsymbol{J}^T \\
            & \boldsymbol{J}
        \end{bmatrix}^{\Join \widetilde{L}}
        \Join
        \begin{bmatrix}
            1\\
            -1
        \end{bmatrix},\label{zoom_basis_D'}\\
        \widehat{\boldsymbol{D}}_{\widetilde{L}} &= 2^{\frac{1}{2}\widetilde{L}-1}
        \begin{bmatrix}
            1 & 0
        \end{bmatrix}
        \Join 
        \begin{bmatrix}
            \boldsymbol{I} & \boldsymbol{J}^T\\
            & \boldsymbol{J}
        \end{bmatrix}^{\Join \widetilde{L}}
        \Join
        \begin{bmatrix}
            \begin{pmatrix}
                1\\
                1
            \end{pmatrix}
            \\
            \begin{pmatrix}
                1\\
                -1
            \end{pmatrix}
        \end{bmatrix}\label{zoom_basis_D}.
    \end{align}
\end{subequations}
As a simple computation shows, the following equalities are satisfied:
\begin{equation}\label{zoom_basis_2}
    \widehat{\varphi}'_{\widetilde{L},j} \circ \Phi_{\widetilde{L},i} = \widehat{\boldsymbol{D}'}_{\widetilde{L}}[i,j], \qquad \widehat{\varphi}_{\widetilde{L},j} \circ \Phi_{\widetilde{L},i} = \sum_{\alpha=1}^2 \widehat{\boldsymbol{D}}_{\widetilde{L}}[\overline{i\alpha},j] \psi_{\widetilde{L},\alpha},
\end{equation}
for all $ i,j \in \{1,\ldots,2^{\widetilde{L}}\}$, where, for each $s \in D$, $\psi_{\widetilde{L},1}(s) := 1$ and $\psi_{\widetilde{L},2}(s) := s$, and $\Phi_{\widetilde{L},i}$ is the affine transformation defined in~\eqref{aff_transf}. Furthermore, let us consider the extension matrix $\widehat{\boldsymbol{P}}_{L,\widetilde{L} } \in \R^{2^{\widetilde{L}}\times 2^{L}}$ from $\{w_L \in \mathcal{S}(\mathcal{T}_L): w_L(0) = 0\}$ to $\{w_{\widetilde{L}} \in \mathcal{S}(\mathcal{T}_{\widetilde{L}}): w_{\widetilde{L}}(0) = 0\}$ with respect to the basis $\{\widehat{\varphi}_j\}_{j=1}^{2^L}$. This matrix reads as
\begin{equation*}
    \widehat{\boldsymbol{P}}_{L,\widetilde{L} } := 2^{\frac{1}{2}(L-\widetilde{L})}
    \begin{pmatrix}
        1 & & & \\
          & \ddots & & \\
          &        & \ddots &\\
          &        &         & 1\\
    \end{pmatrix}
    \otimes
    \begin{pmatrix}
        2^{L-\widetilde{L}}\\
        2^{1 + L-\widetilde{L}}\\
        \vdots\\
        1 - 2^{L-\widetilde{L}}\\
        1
    \end{pmatrix}
    + 2^{\frac{1}{2}(L-\widetilde{L})}
    \begin{pmatrix}
        0 & & & \\
        1 & \ddots & & \\
          & \ddots & \ddots & \\
          & & 1 & 0\\
    \end{pmatrix}
    \otimes
    \begin{pmatrix}
        1 - 2^{L-\widetilde{L}}\\
        1 - 2^{1 + L-\widetilde{L}}\\
        \vdots\\
        2^{L-\widetilde{L}}\\
        0\\
    \end{pmatrix},
\end{equation*}
and the following QTT decomposition holds:
\begin{equation}\label{QTT-ext_mat}
    \widehat{\boldsymbol{P}}_{L,\widetilde{L} }=2^{\frac{1}{2}(L-\widetilde{L})}
    \begin{bmatrix}
        1 & 0 
    \end{bmatrix}
    \Join
    \begin{bmatrix}
            \boldsymbol{I} & \boldsymbol{J}^T \\
            & \boldsymbol{J}
    \end{bmatrix}^{\Join L}
    \Join
    \begin{bmatrix}
        \begin{pmatrix}
            \frac{1}{2} \\
            1
        \end{pmatrix}
        & 
        \begin{pmatrix}
            0  \\
            \frac{1}{2}
        \end{pmatrix}\\
        \begin{pmatrix}
            \frac{1}{2} \\
            0
        \end{pmatrix}
        &
        \begin{pmatrix}
            1  \\
            \frac{1}{2}
        \end{pmatrix}
    \end{bmatrix}^{\Join (\widetilde{L}-L)}
    \Join
    \begin{bmatrix}
        1\\
        0
    \end{bmatrix}.
\end{equation}
Representations~\eqref{zoom_basis}, \eqref{zoom_basis_2}, and \eqref{QTT-ext_mat} 
guarantee that the inner products in~\eqref{RHS} admit low-rank QTT approximations if the initial data $u'_0$ and $v_0$ satisfy the assumptions of Theorem~\ref{th::low_rank_functions}, as established in the following corollary. 

\begin{corollary}
    Let $L \in \N$ be the refinement level in space, and let $\widetilde{L} \in \N$ be such that $\widetilde{L} \geq L$. Let the initial data $u'_0 \in L^2(\Omega)$ and $v_0 \in L^2(\Omega)$ of wave problem~\eqref{eq_wave_ham} satisfy Theorem~\ref{th::low_rank_functions}. Then, the right-hand sides $\widehat{\boldsymbol{f}}_L$ and $\widehat{\boldsymbol{g}}_L$ defined in~\eqref{RHS} admit QTT representations with QTT ranks independent of $L$ and $\widetilde{L}$.
\end{corollary}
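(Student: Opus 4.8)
The plan is to write each of $\widehat{\boldsymbol{f}}_L$ and $\widehat{\boldsymbol{g}}_L$ as a product of a few matrix/vector factors, each of which carries an \emph{explicit} QTT decomposition whose ranks are bounded independently of $L$ and $\widetilde L$. Since the QTT rank of a product is at most the product of the ranks of the factors (Property~\ref{Prop::QTT-mat-by-vec} for matrix--vector products, which are one-column matrices, and Property~\ref{QTT-properties} for transposition and Kronecker products), the conclusion then follows at once. The only factor depending on the initial data will be the low-rank QTT vector supplied by Theorem~\ref{th::low_rank_functions}; all the others will be the fixed operators $\widehat{\boldsymbol{D}'}_{\widetilde L}$, $\widehat{\boldsymbol{D}}_{\widetilde L}$ and $\widehat{\boldsymbol{P}}_{L,\widetilde L}$ of~\eqref{zoom_basis} and~\eqref{QTT-ext_mat}, which have QTT ranks equal to $2$.

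For $\widehat{\boldsymbol{f}}_L$, I would first expand the coarse nodal functions in the fine nodal basis via the extension matrix, $\widehat\varphi_{L,j}=\sum_i\widehat{\boldsymbol{P}}_{L,\widetilde L}[i,j]\,\widehat\varphi_{\widetilde L,i}$, hence $\widehat\varphi_{L,j}'=\sum_i\widehat{\boldsymbol{P}}_{L,\widetilde L}[i,j]\,\widehat\varphi_{\widetilde L,i}'$. Since $\widetilde L\ge L$, each $\widehat\varphi_{\widetilde L,i}'$ is piecewise constant on $\mathcal{T}_{\widetilde L}$, so the $L^2$-orthogonality of $\Pi_{\widetilde L,0}$ gives $\int_\Omega u_0'\,\widehat\varphi_{\widetilde L,i}'=\int_\Omega\Pi_{\widetilde L,0}(u_0')\,\widehat\varphi_{\widetilde L,i}'$ and no accuracy is lost. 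Pulling back each cell integral to $D$ by the affine map $\Phi_{\widetilde L,k}$ that carries $D$ onto $K_{\widetilde L,k}$, using the first identity in~\eqref{zoom_basis_2} (the pull-back of $\widehat\varphi_{\widetilde L,i}'$ is the constant $\widehat{\boldsymbol{D}'}_{\widetilde L}[k,i]$) and the definition~\eqref{L2_proj_const} of $\boldsymbol{\Pi}^{u_0'}_{\widetilde L,0}$, one obtains the closed form $\widehat{\boldsymbol{f}}_L=2^{-\widetilde L}\,\widehat{\boldsymbol{P}}_{L,\widetilde L}^{T}\,\widehat{\boldsymbol{D}'}_{\widetilde L}^{\,T}\,\boldsymbol{\Pi}^{u_0'}_{\widetilde L,0}$. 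By Theorem~\ref{th::low_rank_functions} the ranks of $\boldsymbol{\Pi}^{u_0'}_{\widetilde L,0}$ are bounded by $\max_\ell\dim\mathcal{W}_\ell$, and~\eqref{zoom_basis_D'},~\eqref{QTT-ext_mat} give rank-$2$ decompositions of $\widehat{\boldsymbol{D}'}_{\widetilde L}$ and $\widehat{\boldsymbol{P}}_{L,\widetilde L}$; hence $\widehat{\boldsymbol{f}}_L$ admits a QTT decomposition of rank at most $4\max_\ell\dim\mathcal{W}_\ell$, independent of $L$ and $\widetilde L$.

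For $\widehat{\boldsymbol{g}}_L$ I would run the same argument, reaching $\widehat{\boldsymbol{g}}_L=\widehat{\boldsymbol{P}}_{L,\widetilde L}^{T}\boldsymbol{q}$ with $\boldsymbol{q}[i]=\int_\Omega v_0\,\widehat\varphi_{\widetilde L,i}$. Here $\widehat\varphi_{\widetilde L,i}$ is only piecewise affine, so by the second identity in~\eqref{zoom_basis_2} the pull-back couples $v_0$ to \emph{both} local functions $\psi_{\widetilde L,1}\equiv1$ and $\psi_{\widetilde L,2}(s)=s$; introducing the moment vector $\boldsymbol{m}\in\R^{2^{\widetilde L+1}}$, $\boldsymbol{m}[\overline{k\alpha}]:=\int_D(v_0\circ\Phi_{\widetilde L,k})(s)\,\psi_{\widetilde L,\alpha}(s)\,ds$, gives $\widehat{\boldsymbol{g}}_L=2^{-\widetilde L-1}\,\widehat{\boldsymbol{P}}_{L,\widetilde L}^{T}\,\widehat{\boldsymbol{D}}_{\widetilde L}^{\,T}\,\boldsymbol{m}$ with $\widehat{\boldsymbol{D}}_{\widetilde L}$ of rank $2$ by~\eqref{zoom_basis_D}. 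It remains to check that $\boldsymbol{m}$ itself has bounded QTT rank, which I would obtain by rerunning the proof of Theorem~\ref{th::low_rank_functions} with its final step — the projection $\pi_{\widetilde L,0}$ onto constants — replaced by the fixed finite-rank linear map sending a function on $D$ (which, after the last zoom, lies in a space of dimension at most $\max_\ell\dim\mathcal{W}_\ell$) to its two moments against $1$ and $s$: the first $\widetilde L$ cores are unchanged, only one extra mode of size $2$, carrying the index $\alpha$, is appended, so the QTT ranks of $\boldsymbol{m}$ are still bounded by $\max(\max_\ell\dim\mathcal{W}_\ell,2)$. (Alternatively, replacing $v_0$ by $\Pi_{\widetilde L,0}(v_0)$ annihilates the $\psi_{\widetilde L,2}$ contribution and reduces $\boldsymbol{m}$ to $\boldsymbol{\Pi}^{v_0}_{\widetilde L,0}$, using Theorem~\ref{th::low_rank_functions} verbatim at the price of an $\mathcal{O}(2^{-\widetilde L})$ perturbation of $\widehat{\boldsymbol{g}}_L$.)

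Everything else is bookkeeping: keeping track of the normalization powers of $2$ in~\eqref{basis},~\eqref{zoom_basis} and~\eqref{QTT-ext_mat}, and observing that Properties~\ref{Prop::QTT-mat-by-vec} and~\ref{QTT-properties}, although phrased for square $2^L\times2^L$ matrices, apply verbatim to the rectangular factors $\widehat{\boldsymbol{P}}_{L,\widetilde L}$ and $\widehat{\boldsymbol{D}}_{\widetilde L}$ straight from the strong-Kronecker-product definition. The main obstacle is precisely the bounded-rank statement for the moment vector $\boldsymbol{m}$ — the observation that replacing the cell average by the first two moments on $D$ does not couple refinement levels and therefore preserves the self-similar QTT structure furnished by Theorem~\ref{th::low_rank_functions}.
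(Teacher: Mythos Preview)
Your argument for $\widehat{\boldsymbol{f}}_L$ is exactly the paper's: expand through $\widehat{\boldsymbol{P}}_{L,\widetilde L}$, pull back cellwise, use~\eqref{zoom_basis_2} for the derivative, and arrive at $\widehat{\boldsymbol{f}}_L=2^{-\widetilde L}(\widehat{\boldsymbol{D}'}_{\widetilde L}\widehat{\boldsymbol{P}}_{L,\widetilde L})^{T}\boldsymbol{\Pi}^{u_0'}_{\widetilde L,0}$, with all factors of rank bounded independently of $L,\widetilde L$.

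For $\widehat{\boldsymbol{g}}_L$ your \emph{alternative} is precisely what the paper does: replace $v_0$ by $\Pi_{\widetilde L,0}(v_0)$, which kills the $\psi_{\widetilde L,2}$ contribution by $L^2$-orthogonality and leaves $2^{-\widetilde L}\bigl(\widehat{\boldsymbol{D}}^{(\alpha=1)}_{\widetilde L}\widehat{\boldsymbol{P}}_{L,\widetilde L}\bigr)^{T}\boldsymbol{\Pi}^{v_0}_{\widetilde L,0}$, where $\widehat{\boldsymbol{D}}^{(\alpha=1)}_{\widetilde L}$ is the rank-$2$ matrix of odd rows of $\widehat{\boldsymbol{D}}_{\widetilde L}$. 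The paper is explicit (cf.\ Remark~\ref{remark::L2proj_const}) that this yields only an \emph{approximation} of $\widehat{\boldsymbol{g}}_L$, accurate to $\mathcal{O}(2^{-\widetilde L})$. Your \emph{main} route---keeping both moments in the vector $\boldsymbol{m}\in\R^{2^{\widetilde L+1}}$ and observing that appending the map $\psi_{\widetilde L-1,\alpha_{\widetilde L-1}}\!\circ\phi_{\widetilde L,i_{\widetilde L}}\mapsto\bigl(\int_D(\cdot)\,1,\int_D(\cdot)\,s\bigr)$ to the construction of Theorem~\ref{th::low_rank_functions} only adds one extra mode of size $2$---is correct and in fact delivers a bounded-rank QTT representation of the \emph{exact} $\widehat{\boldsymbol{g}}_L$, so it is slightly stronger than what the paper establishes. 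The trade-off is that the paper's route uses Theorem~\ref{th::low_rank_functions} verbatim, while yours reopens its proof for one additional core.
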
 
\begin{proof}
     Let $\Pi_{\widetilde{L},0}: L^2(\Omega)\rightarrow \mathcal{S}_0(\mathcal{T}_{\widetilde{L}})$ be the $L^2$ projection onto the space $\mathcal{S}_0(\mathcal{T}_{\widetilde{L}})$ of piecewise constant functions defined in~\eqref{piecewise_const}. As above, we denote with $\boldsymbol{\Pi}^{u'_0}_{\widetilde{L},0} \in \R^{2^{\widetilde{L}}}$ and $\boldsymbol{\Pi}^{v_0}_{\widetilde{L},0} \in \R^{2^{\widetilde{L}}}$, respectively, the vector
     representations of $\Pi_{\widetilde{L},0}(u'_0) \in \mathcal{S}_0(\mathcal{T}_{\widetilde{L}})$ and $\Pi_{\widetilde{L},0}(v_0) \in \mathcal{S}_0(\mathcal{T}_{\widetilde{L}})$ in the characteristic basis. Furthermore, for each $i \in \{1,\ldots,2^{\widetilde{L}}\}$, let $\Phi_{\widetilde{L},i}$ be the affine transformation defined in~\eqref{aff_transf}, and fix the reference domain $D =(-1,1)$. For each $j \in \{1,\ldots,2^L\}$, the following relations are satisfied:
     \begin{equation}\label{approx_f}
         \begin{aligned}
             \widehat{\boldsymbol{f}}_L[j] &:= \int_{\Omega} u'_0(x) \hspace{0.05cm} \widehat{\varphi}'_{L,j}(x) \ dx = \sum_{k=1}^{2^{\widetilde{L}}} \widehat{\boldsymbol{P}}_{L,\widetilde{L} }[k,j] \int_{\Omega} u'_0(x) \hspace{0.05cm} \widehat{\varphi}'_{\widetilde{L},k}(x) \ dx\\
             &=2^{-(\widetilde{L}+1)}\sum_{k=1}^{2^{\widetilde{L}}} \widehat{\boldsymbol{P}}_{L,\widetilde{L} }[k,j] \sum_{i=1}^{2^{\widetilde{L}}} \int_{D}(u'_0 \circ \Phi_{\widetilde{L},i})(s)  \hspace{0.05cm} (\widehat{\varphi}'_{\widetilde{L},k} \circ \Phi_{\widetilde{L},i})(s) \ ds \\
             &=2^{-(\widetilde{L}+1)} \sum_{k=1}^{2^{\widetilde{L}}} \sum_{i=1}^{2^{\widetilde{L}}} \widehat{\boldsymbol{P}}_{L,\widetilde{L} }[k,j] \widehat{\boldsymbol{D}'}_{\widetilde{L}}[i,k] \int_{D} (u'_0 \circ \Phi_{\widetilde{L},i})(s) \ ds\\
             &= 2^{-\widetilde{L}} \sum_{k=1}^{2^{\widetilde{L}}} \sum_{i=1}^{2^{\widetilde{L}}} \widehat{\boldsymbol{P}}_{L,\widetilde{L} }[k,j] \widehat{\boldsymbol{D}'}_{\widetilde{L}}[i,k]\boldsymbol{\Pi}^{u'_0}_{\widetilde{L},0}[i].
         \end{aligned}
     \end{equation}
     Namely, $\widehat{\boldsymbol{f}}_L$ is represented by the matrix-by-vector product $2^{-\widetilde{L}}(\widehat{\boldsymbol{D}'}_{\widetilde{L}}\widehat{\boldsymbol{P}}_{L,\widetilde{L}})^T\boldsymbol{\Pi}^{u'_0}_{\widetilde{L},0}$, which can be efficiently perfomed in the QTT format by using Properties~\ref{QTT-properties}, Theorem~\ref{th::low_rank_functions}, and representations~\eqref{zoom_basis_D'}, \eqref{QTT-ext_mat}. 
     
     The square matrix $\widehat{\boldsymbol{D}}^{(\alpha=1)}_{\widetilde{L}}$ defined by the odd rows of the matrix $\widehat{\boldsymbol{D}}_{\widetilde{L}}$ defined in~\eqref{zoom_basis_full_D} admits the QTT decomposition
     \begin{equation}\label{odd_D}             \widehat{\boldsymbol{D}}^{(\alpha=1)}_{\widetilde{L}} =  2^{\frac{1}{2}\widetilde{L}-1}
        \begin{bmatrix}
            1 & 0 
        \end{bmatrix}
        \Join 
        \begin{bmatrix}
            \boldsymbol{I} & \boldsymbol{J}^T \\
            & \boldsymbol{J}
        \end{bmatrix}^{\Join \widetilde{L}}
        \Join
        \begin{bmatrix}
            1\\
            1
        \end{bmatrix}.
     \end{equation}
     For~$\widehat{\boldsymbol{g}}_L$, with similar calculations as in~\eqref{approx_f} we obtain
     \begin{equation}\label{approx_g}
         \begin{aligned}
             \widehat{\boldsymbol{g}}_L[j] &:= \int_{\Omega} v_0(x) \hspace{0.05cm} \widehat{\varphi}_{L,j}(x) \ dx = \sum_{k=1}^{2^{\widetilde{L}}} \widehat{\boldsymbol{P}}_{L,\widetilde{L} }[k,j] \int_{\Omega} v_0(x) \hspace{0.05cm} \widehat{\varphi}_{\widetilde{L},k}(x) \ dx\\
             &=2^{-(\widetilde{L}+1)}\sum_{k=1}^{2^{\widetilde{L}}} \widehat{\boldsymbol{P}}_{L,\widetilde{L} }[k,j] \sum_{i=1}^{2^{\widetilde{L}}} \int_{D}(v_0 \circ \Phi_{\widetilde{L},i})(s)  \hspace{0.05cm} (\widehat{\varphi}_{\widetilde{L},k} \circ \Phi_{\widetilde{L},i})(s) \ ds \\
              &=2^{-(\widetilde{L}+1)}\sum_{k=1}^{2^{\widetilde{L}}} \sum_{i=1}^{2^{\widetilde{L}}} \widehat{\boldsymbol{P}}_{L,\widetilde{L} }[k,j] \sum_{\alpha=1}^2\widehat{\boldsymbol{D}}_{\widetilde{L}}[\overline{i\alpha},k]  \int_{D}(v_0 \circ \Phi_{\widetilde{L},i})(s)  \hspace{0.05cm} \psi_{\widetilde{L},\alpha}(s) \ ds,
         \end{aligned}
     \end{equation}
     which can be approximated by $2^{-\widetilde{L}} \sum_{k=1}^{2^{\widetilde{L}}} \sum_{i=1}^{2^{\widetilde{L}}} \widehat{\boldsymbol{P}}_{L,\widetilde{L} }[k,j] \widehat{\boldsymbol{D}}^{(\alpha=1)}_{\widetilde{L}}[i,k]\boldsymbol{\Pi}^{v_0}_{\widetilde{L},0}[i]$, using the orthogonality of Legendre polynomials $\psi_{\widetilde{L},1}(s) = 1$ and $\psi_{\widetilde{L},2}(s) = s$, $s\in D$.
      Namely, the approximation of $v_0 \in L^2(\Omega)$ with $\Pi_{\widetilde{L},0}(v_0) \in \mathcal{S}_0(\mathcal{T}_{\widetilde{L}})$ results in the approximation of $\widehat{\boldsymbol{g}}_L$ by the matrix-by-vector product $2^{-\widetilde{L}}\left(\widehat{\boldsymbol{D}}^{(\alpha=1)}_{\widetilde{L}}\widehat{\boldsymbol{P}}_{L,\widetilde{L}}\right)^{T}\boldsymbol{\Pi}^{v_0}_{\widetilde{L},0}$, which can be efficiently perfomed in the QTT format by using Properties~\ref{QTT-properties}, Theorem~\ref{th::low_rank_functions}, and representations~\eqref{QTT-ext_mat} and \eqref{odd_D}.
\end{proof}

\begin{remark}
Whenever the pull-back of the initial data~$u'_0\circ\phi_0$ and~$v_0\circ\phi_0$ do not belong to a finite dimensional space, one could work with approximations of them in suitable finite dimensional spaces.
\end{remark}

\begin{remark}\label{remark::L2proj_const}
    The inner product $\widehat{\boldsymbol{f}}_L$ for the initial position is explicitly and exactly represented in the QTT format. In contrast, $\widehat{\boldsymbol{g}}_L$ is only approximated. The accuracy in this approximation depends on the magnitude of $\widetilde{L}$, which determines the accuracy of the $L^2$ projection $\Pi_{\widetilde{L},0}: L^2(\Omega)\rightarrow \mathcal{S}_0(\mathcal{T}_{\widetilde{L}})$. Relying on Theorem~\ref{th::low_rank_functions}, we can handle \emph{huge} values for $\widetilde{L}$ (e.g., $\widetilde{L} = 60$, see Section~\ref{subsec:initial_data_computation}), providing a highly accurate QTT approximation of $\widehat{\boldsymbol{g}}_L$.
\end{remark}

\begin{remark}
    While a low-rank QTT approximation of the initial velocity is provided by performing the matrix-by-vector product~\eqref{linear_syst_v0} in the QTT format, a low-rank QTT approximation of the initial position is provided by considering a low-rank QTT compression of the linear system~\eqref{lin_syst_u0}, then solved with a suitable optimization solver. Since the matrix $\boldsymbol{C}_L\widehat{\boldsymbol{A}}_L\boldsymbol{C}_L$ is symmetric, positive definite, and (asymptotically) uniformly well-conditioned, in our computations we solve system~\eqref{lin_syst_u0} in the QTT format by using the Richardson iteration.
\end{remark}

\subsection{Tensor-structured GLRK--FEM iterations}\label{subsec:QTT_GLRK-FEM}
In this section, we describe 
the implementation of the GLRK--FEM iterations in~\eqref{u-v_GLRK-FEM}-\eqref{k1-k2_GLRK-FEM} in the QTT format.

Each matrix involved in the GLRK--FEM iterations~\eqref{u-v_GLRK-FEM}-\eqref{k1-k2_GLRK-FEM} admits an exact and explicit QTT decomposition with ranks independent of $L$. We give explicit rank estimates based on Property~\ref{QTT-properties}, the results of~\cite{BK2020}, and the Sherman--Morrison--Woodbury formula. As discussed at the beginning of this section, in our computations we properly pad with zeros all the matrices associated with the space discretization, so that their sizes are $2^L\times2^L$, see, e.g., equations~\eqref{lin_syst_u0} and \eqref{linear_syst_v0}. In stating the following property, for the sake of simplicity, we avoid introducing additional notation for the padded matrices, and keep the same notation that we have used in the GLRK--FEM iterations~\eqref{u-v_GLRK-FEM}-\eqref{k1-k2_GLRK-FEM}. 

\begin{property}\label{prop::wave-op_rank_bounds}
Let $L \in \N$ be the number of space levels, and $q \in \N$ be the number of GLRK stages. The following rank bounds are satisfied:
\begin{itemize}
    \item[(1)] The BPX preconditioner $\boldsymbol{C}_L$ defined in~\eqref{BPX} admits a QTT decomposition with $L$ cores and ranks up to $13$.
    \item[(2)] The matrix $\boldsymbol{b}^{(q)T }\otimes \boldsymbol{I}_L$ on the right-hand side of~\eqref{GLRK-FEM_uhat} admits a QTT decomposition with $L+1$ cores and ranks up to $1,2,\ldots,2$.
    \item[(3)] The preconditioned stiffness matrix $\boldsymbol{C}_L \widehat{\boldsymbol{A}}_L\boldsymbol{C}_L$ defining system~\eqref{GLRK-FEM_u} admits a QTT decomposition with $L$ cores and ranks up to $169$.
    \item[(4)] The matrix $\boldsymbol{C}_L \widehat{\boldsymbol{A}}_L $ on the right-hand side of~\eqref{GLRK-FEM_u} admits a QTT decomposition with $L$ cores and ranks up to $39$.
    \item[(5)] The matrix $\boldsymbol{A}^{(q)} \otimes \boldsymbol{I}_L$ on the right-hand side of~\eqref{GLRK-FEM_k1} admits a QTT decomposition with $L+1$ cores and ranks up to $1,2,\ldots,2$.
    \item[(6)] The matrix $\boldsymbol{I}^{(q)}\otimes\boldsymbol{I}_L + \tau^2 \boldsymbol{A}^{(q)}\boldsymbol{A}^{(q)} \otimes \widehat{\boldsymbol{M}}_L^{-1}\widehat{\boldsymbol{A}}_L$ defining system~\eqref{GLRK-FEM_k2} admits a QTT decomposition with $L+1$ cores and ranks up to $2,128,\ldots,128$.
    \item[(7)] The matrices $\boldsymbol{A}^{(q)} \otimes \widehat{\boldsymbol{M}}_L^{-1}\widehat{\boldsymbol{A}}_L$ and $\boldsymbol{I}^{(q)} \otimes \widehat{\boldsymbol{M}}_L^{-1}\widehat{\boldsymbol{A}}_L$ on the right-hand side of~\eqref{GLRK-FEM_k2} admit QTT decompositions with $L+1$ cores and ranks up to $1,126,\ldots,126$. 
\end{itemize}
\end{property}

The code for computing the explicit QTT decompositions whose rank bounds are detailed in Property~\ref{prop::wave-op_rank_bounds} is available for verification in~\cite{QTTwave.jl}.

\begin{remark}
The method requires to perform matrix-by-vector multiplications with
     the matrices in Property~\ref{prop::wave-op_rank_bounds}. If, during this process, one applies low-rank re-approximations~\cite[Algo. 2]{O2011}, the (high) QTT rank bounds in Property~\ref{prop::wave-op_rank_bounds} may not affect the resulting inexact scheme.
\end{remark}

From now on, we make the following assumption.
\begin{assumption}\label{assumpt::q}
    Let $q$ be the number of GLRK stages, and $\ceil{\cdot}$ be the ceiling operator. The uniform time step $\tau$ is defined as $\tau=2^{-\ceil{L/q}}$, where $L$ is the number of space levels.
\end{assumption}
Assumption~\ref{assumpt::q} ensures that, if the continuous initial data $u_0$ and $v_0$ satisfy suitable compatibility conditions~\cite[Th. 8]{FP1996}, the fully-discrete solutions~\eqref{fully_discr_sol} approximate the exact solutions 
at each discrete time with accuracy $\mathcal{O}(2^{-2L})$ and $\mathcal{O}(2^{-L})$, respectively, in $\|\cdot\|_{L^2(\Omega)}$ and $\|\cdot\|_{H^1_0(\Omega)}$, for $L \rightarrow \infty$ and $q \rightarrow \infty$. Then, assume that the number of GLRK stages $q$ is sufficiently large. In that case, we can obtain exponential convergence of the GLRK--FEM~\eqref{u-v_GLRK-FEM}-\eqref{k1-k2_GLRK-FEM} in the number of levels $L$. In this setting, the computational complexity reduction provided by the QTT decomposition permits the use of a high number of levels, allowing to benefit from the exponential convergence of the method.

As discussed in Section~\ref{sec::fully_discrete}, the linear systems we solve in the GLRK--FEM iterations~\eqref{u-v_GLRK-FEM}-\eqref{k1-k2_GLRK-FEM} are system~\eqref{GLRK-FEM_u}, whose matrix is uniformly well-conditioned, and system~\eqref{GLRK-FEM_k2}, which defines the intermediate slopes for velocity. According to the conditioning behavior of the latter, we split the remaining discussion of this subsection into two parts: the first part is devoted to the one-stage GLRK--FEM, and the second part addresses the GLRK--FEM with $q>1$ stages.

\subsubsection{One-stage GLRK--FEM}\label{subsec:midpoint_matrix}

Let $L \in \N$ denote the number of space levels. Let us consider the GLRK--FEM iterations~\eqref{u-v_GLRK-FEM}-\eqref{k1-k2_GLRK-FEM} with one stage ($q=1$). Namely, we consider the implicit midpoint method whose Butcher tableau is detailed in Example~\ref{ex:GLRK}. In this setting, the systems defining the intermediate slopes for velocity read as
\begin{equation}\label{syst_k2_q1}
    \left(\boldsymbol{I}_L + \frac{\tau^2}{4}\widehat{\boldsymbol{M}}^{-1}_L\widehat{\boldsymbol{A}}_L \right)\boldsymbol{k}^{(n,q)}_{v,L}= -\frac{\tau}{2} \widehat{\boldsymbol{M}}^{-1}_L\widehat{\boldsymbol{A}}_L \widehat{\boldsymbol{v}}^{(n)}_L - \widehat{\boldsymbol{M}}^{-1}_L\widehat{\boldsymbol{A}}_L \boldsymbol{C}_L \boldsymbol{u}^{(n)}_L \quad \text{for } n= 0,1,\ldots,n_t-1.
\end{equation}
Under Assumption~\ref{assumpt::q}, 
the matrices~$\boldsymbol{I}_L + \frac14 \tau^2\widehat{\boldsymbol{M}}^{-1}_L\widehat{\boldsymbol{A}}_L$ are uniform well-conditioning in $L$, as reported in Table~\ref{tab::cond_q1}. Therefore, the QTT representation of each system in~\eqref{syst_k2_q1} can be solved using a suitable gradient-based optimization method for the least square formulation of~\eqref{syst_k2_q1}, such as gradient descent.
\begin{table}[h!]
    \centering
    \begin{tabular}{ccccc}
        \toprule
        $L$ & $h_L$ & $\sigma_{\min}$ & $\sigma_{\max}$ & $\kappa_2$\\
        \midrule
        1 & 5.0000000e-1 & 1.7500000e0 & 1.7500000e0 & 1.0000000e0\\
        2 & 2.5000000e-1 & 1.1622913e0 & 2.9805659e0 & 2.5643881e0\\
        3 & 1.2500000e-1 & 1.0390511e0 & 3.6816879e0 & 3.5433175e0\\
        4 & 6.2500000e-2 & 1.0096693e0 & 3.9151639e0 & 3.8776696e0\\
        5 & 3.1250000e-2 & 1.0024115e0 & 3.9784351e0 & 3.9688642e0\\
        6 & 1.5625000e-2 & 1.0006025e0 & 3.9945861e0 & 3.9921808e0\\
        7 & 7.8125000e-3 & 1.0001506e0 & 3.9986451e0 & 3.9980429e0\\
        8 & 3.9062500e-3 & 1.0000377e0 & 3.9996612e0 & 3.9995104e0\\
        9 & 1.9531250e-3 & 1.0000094e0 & 3.9999153e0 & 3.9998777e0\\
        10 & 9.7656250e-4 & 1.0000024e0 & 3.9999788e0 & 3.9999692e0\\
    \bottomrule
    \end{tabular}
    \caption{Spectral condition numbers $\kappa_2$ of the system matrix in~\eqref{syst_k2_q1}. We denote with $h_L=2^{-L}$ the uniform mesh size in space, and with $\sigma_{\min}$ and $\sigma_{\max}$, respectively, the minimal and maximal singular values of the considered matrix.}
    \label{tab::cond_q1}
\end{table}

\subsubsection{High-order GLRK--FEM}\label{subsec:high-order_GLRK-FEM}

Under Assumption~\ref{assumpt::q}, and aiming at obtaining exponential convergence of the GLRK--FEM~\eqref{u-v_GLRK-FEM}-\eqref{k1-k2_GLRK-FEM} with respect to $L$, relying on one-stage Runge Kutta iterations to approximate time evolution of wavefronts might be computationally unaffordable. Therefore, considering GLRK--FEMs with $q>1$ stages is more suitable for our setting. 

Let us define the following matrix:
\begin{equation}\label{syst_k2_generalq}
    \boldsymbol{H}^{(q)}_L := \boldsymbol{I}^{(q)}\otimes\boldsymbol{I}_L + \tau^2 \boldsymbol{A}^{(q)}\boldsymbol{A}^{(q)} \otimes \widehat{\boldsymbol{M}}_L^{-1}\widehat{\boldsymbol{A}}_L,
\end{equation}
which is the operator that defines the intermediate slopes for velocity in~\eqref{GLRK-FEM_k2}, and which coincides with the system matrix in~\eqref{syst_k2_q1} when $q=1$. As shown in Tables~\ref{tab::cond_q2},~\ref{tab::cond_q5}, and~\ref{tab::cond_q8}, the matrix in~\eqref{syst_k2_generalq} is  not uniformly well conditioned for $q>1$. Numerically, we observe the spectral condition number to grow as $\mathcal{O}\left(q^{4/3}(2^{L-\ceil{L/q}})^{2}\right)$ for $L \rightarrow \infty$, like the maximal singular value. Therefore, computing discrete solutions for large $L$ requires preconditioning. 

\begin{remark}
    The GLRK--FEM iterations~\eqref{u-v_GLRK-FEM}-\eqref{k1-k2_GLRK-FEM} have been written exploiting the block structure of the matrix~$\boldsymbol{B}_{2L}$ defined in~\eqref{matrix_midp}. We have observed numerically that the coefficient matrix of the linear system in~\eqref{GLRK_FEM}, from which we derive the GLRK--FEM~\eqref{u-v_GLRK-FEM}-\eqref{k1-k2_GLRK-FEM}, presents the same ill-conditioning behavior as the matrix~$\boldsymbol{H}^{(q)}_L$ in~\eqref{syst_k2_generalq} for $L \rightarrow \infty$.
\end{remark}

\begin{table}[h!]
\centering
\caption*{$q = 2$}
\begin{tabular}{cccccc}
    \toprule
        $L$ & $h_L$ & $q^{4/3}\left(h_L^{-1}\tau^{(q)}_L\right)^{2}$ & $\sigma_{\min}$ & $\sigma_{\max}$ & $\kappa_2$\\
    \midrule
    1 & 5.0000000e-1 & 2.5198421e0 & 8.3045635e-1 & 1.5804564e0 & 1.9031179e0\\
        2 & 2.5000000e-1 & 1.0079368e1& 7.9128785e-1 & 9.0899623e0 & 1.1487555e1\\
        3 & 1.2500000e-1 & 1.0079368e1& 7.6896481e-1 & 1.2155002e1 & 1.5806968e1\\
        4 & 6.2500000e-2 & 4.0317474e1& 7.7338419e-1 & 5.1643689e1 & 6.6776241e1\\
        5 & 3.1250000e-2 & 4.0317474e1& 7.6373267e-1 & 5.2757857e1 & 6.9078958e1\\
        6 & 1.5625000e-2 & 1.6126989e2& 7.6373499e-1 & 2.1127753e2 & 2.7663723e2\\
        7 & 7.8125000e-3 & 1.6126989e2& 7.6373499e-1 & 2.1156352e2 & 2.7701169e2\\
        8 & 3.9062500e-3 & 6.4507958e2& 7.6376779e-1 & 8.4569342e2 & 1.1072651e3\\
        9 & 1.9531250e-3 & 6.4507958e2 & 7.6376779e-1& 8.4576504e2& 1.1073589e3\\
        10 & 9.7656250e-4 &  2.5803183e3 & 7.6377799e-1& 3.3822959e3 & 4.4283757e3\\
    \bottomrule
\end{tabular}
    \caption{Spectral condition numbers $\kappa_2$ of matrix $\boldsymbol{H}^{(q)}_L$ defined in~\eqref{syst_k2_generalq} with $q=2$. We denote with $h_L=2^{-L}$ the uniform mesh size in space, $\tau^{(q)}_{L}:=2^{-\ceil{L/q}}$ the uniform step size, and with $\sigma_{\min}$ and $\sigma_{\max}$, respectively, the minimal and maximal singular values of the considered matrix.}
    \label{tab::cond_q2}
\end{table}

\begin{table}[h!]
\centering
    \caption*{$q = 5$}
    \begin{tabular}{cccccc}
        \toprule
            $L$ & $h_L$ & $q^{4/3}\left(h_L^{-1}\tau^{(q)}_L\right)^{2}$ & $\sigma_{\min}$ & $\sigma_{\max}$ & $\kappa_2$\\
        \midrule
        1 & 5.0000000e-1 & 8.5498797e0 & 7.4999052e-1 & 1.6479997e0 & 2.1973607e0\\
        2 & 2.5000000e-1 & 3.4199519e1 & 3.9812187e-1 & 1.0072352e1 & 2.5299670e1\\
        3 & 1.2500000e-1 & 1.3679808e2& 3.6108045e-1 & 5.3102702e1 & 1.4706612e2\\
        4 & 6.2500000e-2 & 5.4719230e2 & 3.6054521e-1 & 2.3009071e2 & 6.3817436e2\\
        5 & 3.1250000e-2 & 2.1887692e3 & 3.5915199e-1 & 9.3963938e2 & 2.6162722e3\\
        6 & 1.5625000e-2 & 2.1887692e3 & 3.5915199e-1 & 9.4473349e2 & 2.6304559e3\\
        7 & 7.8125000e-3 & 8.7550769e3 & 3.5922799e-1 & 3.7833867e3 & 1.0531993e4\\
        8 & 3.9062500e-3 & 3.5020307e4 & 3.5927318e-1 & 1.5138009e4 & 4.2135093e4\\
        9 & 1.9531250e-3 & 1.4008123e5& 3.5928609e-1 & 6.0556501e4 & 1.6854675e5\\
        10 & 9.7656250e-4 & 5.6032492e5 & 3.5928942e-1 & 2.4223047e5 & 6.7419316e5\\
        \bottomrule
    \end{tabular}
        \caption{Spectral condition numbers $\kappa_2$ of matrix $\boldsymbol{H}^{(q)}_L$ defined in~\eqref{syst_k2_generalq} with $q=5$.}
        \label{tab::cond_q5}
\end{table}

\begin{table}[h!]
\centering
\caption*{$q = 8$}
    \begin{tabular}{cccccc}
        \toprule
            $L$ & $h_L$ & $q^{4/3}\left(h_L^{-1}\tau^{(q)}_L\right)^{2}$ & $\sigma_{\min}$ & $\sigma_{\max}$ & $\kappa_2$\\
        \midrule
        1 & 5.0000000e-1 & 1.5999999e1& 7.4258759e-1 & 1.6713376e0 & 2.2506942e0\\
        2 & 2.5000000e-1 & 6.3999999e1& 3.8306418e-1 & 1.0394605e1 & 2.7135414e1\\
        3 & 1.2500000e-1 & 2.5599999e2& 2.1174765e-1 & 5.4874372e1 & 2.5914985e2\\
        4 & 6.2500000e-2 & 1.0239999e3& 2.1399304e-1 & 2.3780644e2 & 1.1112812e3\\
        5 & 3.1250000e-2 & 4.0959999e3& 2.1193086e-1 & 9.7118174e2 & 4.5825405e3\\
        6 & 1.5625000e-2 & 1.6383999e4& 2.1279012e-1 & 3.9051284e3 & 1.8352019e4\\
        7 & 7.8125000e-3 & 6.5535999e4& 2.1312087e-1 & 1.5641029e4 & 7.3390415e4\\
        8 & 3.9062500e-3 & 2.6214399e5& 2.1321093e-1 & 6.2584659e4 & 2.9353401e5\\
        9 & 1.9531250e-3 & 2.6214399e5 & 2.1210756e-1& 6.2589961e4& 2.9508595e5\\
        10 & 9.7656250e-4 & 1.0485759e6 & 2.1209649e-1& 2.5036449e5& 1.1804273e6\\
        \bottomrule
    \end{tabular}
        \caption{Spectral condition numbers $\kappa_2$ of matrix $\boldsymbol{H}^{(q)}_L$ defined in~\eqref{syst_k2_generalq} with $q=8$.}
        \label{tab::cond_q8}
\end{table}

The ill-conditioning of $\boldsymbol{H}^{(q)}_L$ in $L$ is mainly due to the stiffness matrix $\widehat{\boldsymbol{A}}_L$, which is computed with respect to the $L^2$-normalized nodal basis~\eqref{basis}. Recalling that $\boldsymbol{C}_L\widehat{\boldsymbol{A}}_L\boldsymbol{C}_L$, with $\boldsymbol{C}_L$ the BPX preconditioner~\eqref{BPX}, is well conditioned, we employ 
\begin{equation}\label{precond}
    \boldsymbol{Q}^{(q)}_L:= \boldsymbol{I}^{(q)} \otimes \boldsymbol{C}_L,
\end{equation}
as a symmetric preconditioner,
and consider the preconditioned problems: For $n=0,1,\ldots,n_t-1$, find $\widetilde{\boldsymbol{k}}^{(n,q)}_{v,L}$ such that
\begin{equation}\label{GLKR-FEM_k2_precond}
        \begin{split}
    \boldsymbol{Q}^{(q)}_L \boldsymbol{H}^{(q)}_L \boldsymbol{Q}^{(q)}_L \widetilde{\boldsymbol{k}}^{(n,q)}_{v,L} &= -\tau\boldsymbol{Q}^{(q)}_L\left( \boldsymbol{A}^{(q)} \otimes \widehat{\boldsymbol{M}}_L^{-1}\widehat{\boldsymbol{A}}_L\right)\left((1,\ldots,1)^T \otimes \widehat{\boldsymbol{v}}^{(n)}_L\right)\\ 
    &\qquad- \boldsymbol{Q}^{(q)}_L\left( \boldsymbol{I}^{(q)} \otimes \widehat{\boldsymbol{M}}_L^{-1}\widehat{\boldsymbol{A}}_L\right) \left((1,\ldots,1)^T \otimes \boldsymbol{C}_L \boldsymbol{u}^{(n)}_L\right), 
    \end{split} 
\end{equation}
with $(1,\ldots,1)^T \in \R^q$. Then, for each $n=0,1,\ldots,n_t-1$, $\boldsymbol{k}^{(n,q)}_{v,L} := \boldsymbol{Q}^{(q)}_L \widetilde{\boldsymbol{k}}^{(n,q)}_{v,L}$ solves~\eqref{GLRK-FEM_k2}, i.e., 
\begin{equation*}
\begin{split}
\boldsymbol{H}^{(q)}_L \boldsymbol{k}^{(n,q)}_{v,L} &=-\tau\left( \boldsymbol{A}^{(q)} \otimes \widehat{\boldsymbol{M}}_L^{-1}\widehat{\boldsymbol{A}}_L\right)\left((1,\ldots,1)^T \otimes \widehat{\boldsymbol{v}}^{(n)}_L\right)\\ &\qquad-\left( \boldsymbol{I}^{(q)} \otimes \widehat{\boldsymbol{M}}_L^{-1}\widehat{\boldsymbol{A}}_L\right) \left((1,\ldots,1)^T \otimes \boldsymbol{C}_L \boldsymbol{u}^{(n)}_L\right).
\end{split}
\end{equation*}

\begin{remark}
    The matrix $\boldsymbol{Q}^{(q)}_L$ defined in~\eqref{precond} admits a QTT decomposition with $L+1$ cores and QTT ranks up to $1,13,\ldots,13$, as a result of Property~\ref{prop::wave-op_rank_bounds}.
\end{remark}

While not an optimal preconditioner, it can be seen in Figure~\ref{fig:precond_q5} 
that, for~$q=5,8$, the preconditioner~\eqref{precond} reduces the condition number of $\boldsymbol{H}^{(q)}_L $. Indeed, after initial growth, the condition number of $\boldsymbol{Q}^{(q)}_L \boldsymbol{H}^{(q)}_L \boldsymbol{Q}^{(q)}_L$ saturates. In contrast, we have observed that this is not true for $q=2$.

\begin{figure}[h!]
    \centering
    \includegraphics[width=0.48\linewidth]{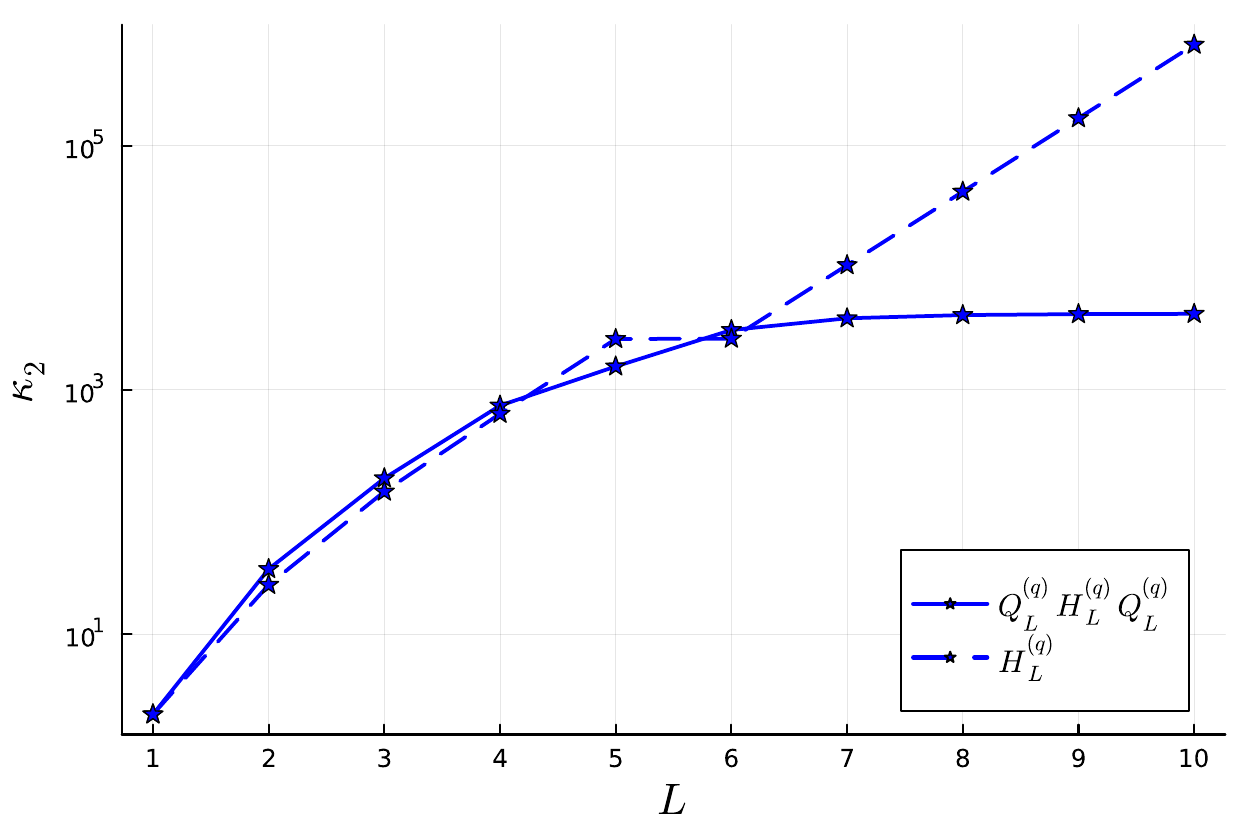}
    \includegraphics[width=0.48\linewidth]{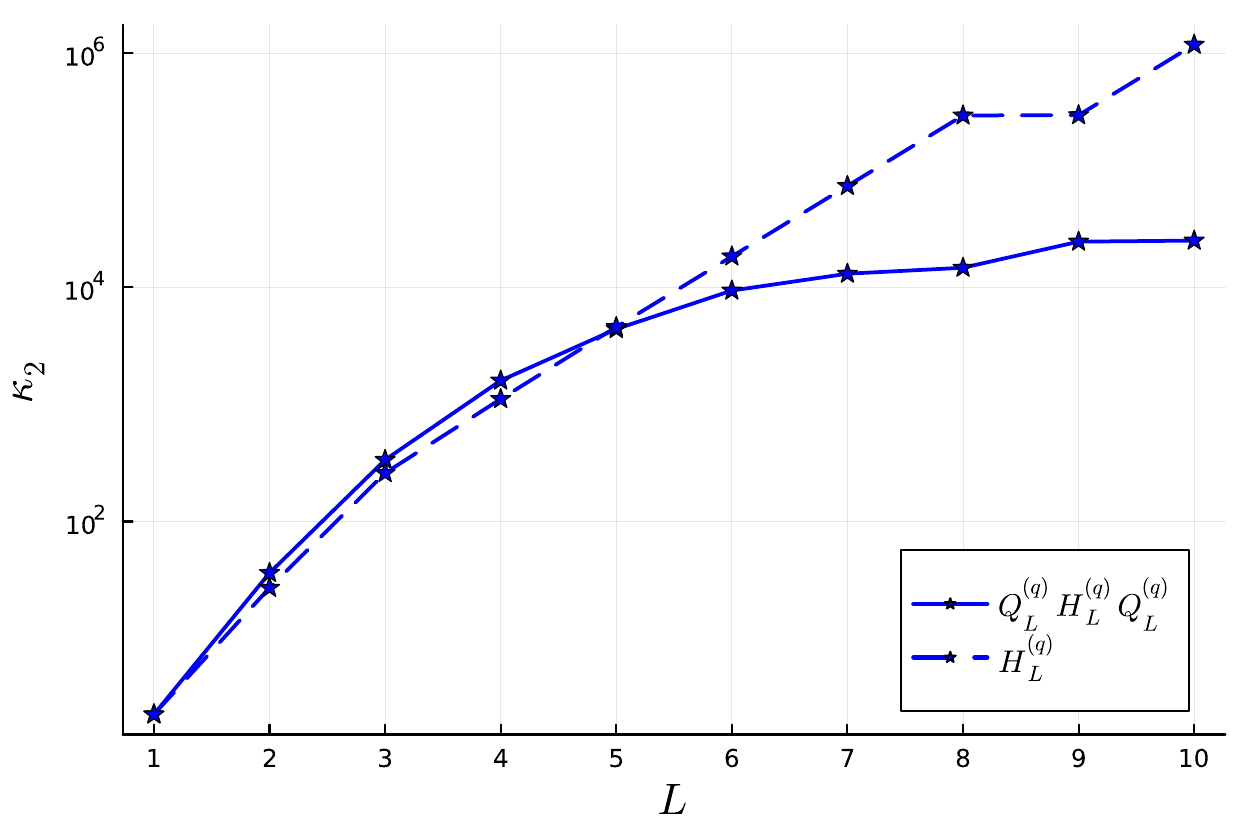}
    
    \caption{Spectral condition numbers $\kappa_2$ of matrix $\boldsymbol{Q}^{(q)}_L \boldsymbol{H}^{(q)}_L \boldsymbol{Q}^{(q)}_L$ (solid line), with $\boldsymbol{Q}^{(q)}_L$ defined in~\eqref{precond}, and $\boldsymbol{H}^{(q)}_L$ defined in~\eqref{syst_k2_generalq} (dashed line). The number of Runge--Kutta stages is $q=5$ (left plot) and $q=8$ (right plot).}
    \label{fig:precond_q5}
\end{figure}

The preconditioned GLRK--FEM iterations are implemented as shown in Algorithm~\ref{alg:GLRK-FEM_precond}. 

\begin{remark}
    If $q=1$, we compress and solve Algorithm~\ref{alg:GLRK-FEM} iterations in the QTT format. For $q>1$, we compress the iterations presented in Algorithm~\ref{alg:GLRK-FEM_precond}.
\end{remark}

\begin{algorithm}
\caption{Preconditioned GLRK--FEM}\label{alg:GLRK-FEM_precond}
\hspace*{\algorithmicindent} \textbf{Input:} $\left(\boldsymbol{u}^{(n)}_L,\widehat{\boldsymbol{v}}^{(n)}_L\right)$\\
\hspace*{\algorithmicindent} \textbf{Output:} $\left(\boldsymbol{u}^{(n+1)}_L,\widehat{\boldsymbol{v}}^{(n+1)}_L\right)$
\begin{algorithmic}[1]
\State $\widehat{\boldsymbol{u}}^{(n)}_L \gets \boldsymbol{C}_L \boldsymbol{u}^{(n)}_L$
\State $\widehat{\boldsymbol{u}}^{(n,q)}_L \gets (1,\ldots,1)^T \otimes \widehat{\boldsymbol{u}}^{(n)}_L$, with $(1,\ldots,1)^T \in \R^q$
\State $\widehat{\boldsymbol{v}}^{(n,q)}_L \gets (1,\ldots,1)^T \otimes \widehat{\boldsymbol{v}}^{(n)}_L$, with $(1,\ldots,1)^T \in \R^q$
\State $\widetilde{\boldsymbol{k}}^{(n,q)}_{v,L} \gets $ Solve \eqref{GLKR-FEM_k2_precond} given $\left(\widehat{\boldsymbol{u}}^{(n,q)}_L, \widehat{\boldsymbol{v}}^{(n,q)}_L\right)$
\State $\boldsymbol{k}^{(n,q)}_{v,L} \gets \boldsymbol{Q}^{(q)}_L \widetilde{\boldsymbol{k}}^{(n,q)}_{v,L}$
\State $\widehat{\boldsymbol{v}}^{(n+1)}_L \gets \widehat{\boldsymbol{v}}^{(n)}_L + \tau \left(\boldsymbol{b}^{(q)  T} \otimes \boldsymbol{I}_L \right) \boldsymbol{k}_{v,L}^{(n,q)}$
\State $\boldsymbol{Q}^{(q)}_L \boldsymbol{k}_{u,L}^{(n,q)} \gets \tau \left(\boldsymbol{A}^{(q)} \otimes \boldsymbol{I}_L\right) \boldsymbol{k}_{v,L}^{(n,q)} + \widehat{\boldsymbol{v}}^{(n,q)}_L $
\State $\widehat{\boldsymbol{u}}^{(n+1)}_L \gets \widehat{\boldsymbol{u}}^{(n)}_L + \tau \left(\boldsymbol{b}^{(q)T }\otimes \boldsymbol{I}_L \right) \boldsymbol{Q}^{(q)}_L\boldsymbol{k}_{u,L}^{(n,q)}$
\State $\boldsymbol{u}^{(n+1)}_L \gets$ Solve~\eqref{GLRK-FEM_u} given $\widehat{\boldsymbol{u}}^{(n+1)}_L$
\end{algorithmic}
\end{algorithm}

\section{Numerical experiments}\label{sec:numerical_experiments}

In this section, we present numerical experiments demonstrating the efficiency of the QTT-compressed GLRK--FEM. All computations are carried out in Julia. The Julia package~\cite{TensorRefinement.jl} is extensively exploited to perform low-rank computations in the QTT format.

\subsection{Example 1: low-frequency oscillations}\label{sec:low-freq}
Throughout this section, the time-harmonic, standing wave
\begin{equation}\label{exact_sol_u}
    u(x,t) = (\cos(\pi t) + \sin(\pi t)) \sin(\pi x), \quad (x,t) \in \Omega \times (0,T)=(0,1)^2
\end{equation}
and its time derivative
\begin{equation}\label{exact_sol_v}
    v(x,t) = \pi (\cos{(\pi t)} - \sin{(\pi t)}) \sin(\pi x), \quad (x,t) \in \Omega \times (0,T)
\end{equation}
are taken as the reference exact solutions of the wave propagation problem~\eqref{eq_wave_ham}. 
The corresponding initial data $(u_0,v_0)$ given by
\begin{subequations}\label{initial_data_trig}
\begin{align}
    u_0(x) &= \sin{(\pi x)}, & x &\in \Omega,\\
    v_0(x) &= \pi \sin{(\pi x)}, & x &\in \Omega,
\end{align}
\end{subequations}
satisfy the assumptions of Theorem~\ref{th::low_rank_functions}, as observed in Example~\ref{example:low_rank_functions}, and the compatibility conditions of~\cite[Th. 8]{FP1996} hold. Furthermore, the function in~\eqref{exact_sol_u} and its time derivative~\eqref{exact_sol_v} are sinusoidal functions with different amplitudes at each fixed time. Therefore, the QTT rank bounds we consider for their QTT approximations do not change over time.

\subsubsection{Construction of the discrete initial data}\label{subsec:initial_data_computation}

Let $L\in \N$ be the number of space levels. With abuse of notation, we denote with $\boldsymbol{u}_{L,0}$ and $\widehat{\boldsymbol{v}}_{L,0}$ the QTT representations of the initial data that we compute solving~\eqref{lin_syst_u0} and \eqref{linear_syst_v0} in the QTT format. To compute $\boldsymbol{u}_{L,0}$ and $\widehat{\boldsymbol{v}}_{L,0}$, the following approach is used.
\begin{itemize}
    \item The inner products in~\eqref{RHS} are approximated with $\widetilde{L} = 60$ levels and the scheme described in Section~\ref{subsec:initdata}.
    \item System~\eqref{lin_syst_u0} is solved using the truncated Richardson iteration with $150$ maximal number of iterations.
    \item TT rank bounds for the discrete position are up to $14$ at each  Richardson iteration. These bounds were obtained heuristically. Accordingly, TT rank bounds for the residuals are up to $28$ at each iteration.
\end{itemize}

As shown in Figure~\ref{fig:QTT_initial_data}, exponential convergence in $L$ is provided for the discrete initial position and the discrete initial velocity. Exploiting the QTT parametrization of a straightforward space discretization allows us to reach a very high
accuracy and affordable computational complexity across the range $\{1,\ldots,20\}$ of the number of levels $L$. Specifically, to compute $\widehat{\boldsymbol{v}}_{L,0}$, we perform the following matrix-by-matrix multiplication in the QTT format:
\begin{equation*}
    \widehat{\boldsymbol{v}}_{L,0} =
    \begin{pmatrix}
        \widehat{\boldsymbol{M}}^{-1}_L & \\
        & 0 
    \end{pmatrix}
    \widehat{\boldsymbol{g}}_L.
\end{equation*}
The cost of this operation is \emph{linear} in $L$, instead of being \emph{exponential}, and is mainly governed by the square of the product between the QTT ranks of the padded inverse mass matrix and the QTT ranks of $\widehat{\boldsymbol{g}}_L$, which are both uniformly bounded with respect to $L$. To compute $\boldsymbol{u}_{L,0}$, we solve the following padded linear system using  Richardson iteration:
\begin{equation*}
    \begin{pmatrix}
        \boldsymbol{C}_L\widehat{\boldsymbol{A}}_L\boldsymbol{C}_L & \\
        & 0
    \end{pmatrix} 
    \boldsymbol{u}_{L,0} = 
    \begin{pmatrix}
        \boldsymbol{C}_L & \\
        & 0
    \end{pmatrix}
    \widehat{\boldsymbol{f}}_L,
\end{equation*}
whose fast convergence is ensured by the uniform well-conditioning behavior in $L$ of the matrix $\boldsymbol{C}_L\widehat{\boldsymbol{A}}_L\boldsymbol{C}_L$. The computational cost of each iteration is mainly governed by the rounding procedure that reduces the ranks of the updated position and residual. For each iteration, this cost is \emph{linear} in $L$, and mainly governed by the cubes of the involved QTT ranks, which are uniformly bounded with respect to $L$. 

\begin{figure}[h!]
    \centering
    \includegraphics[width=0.6\linewidth]{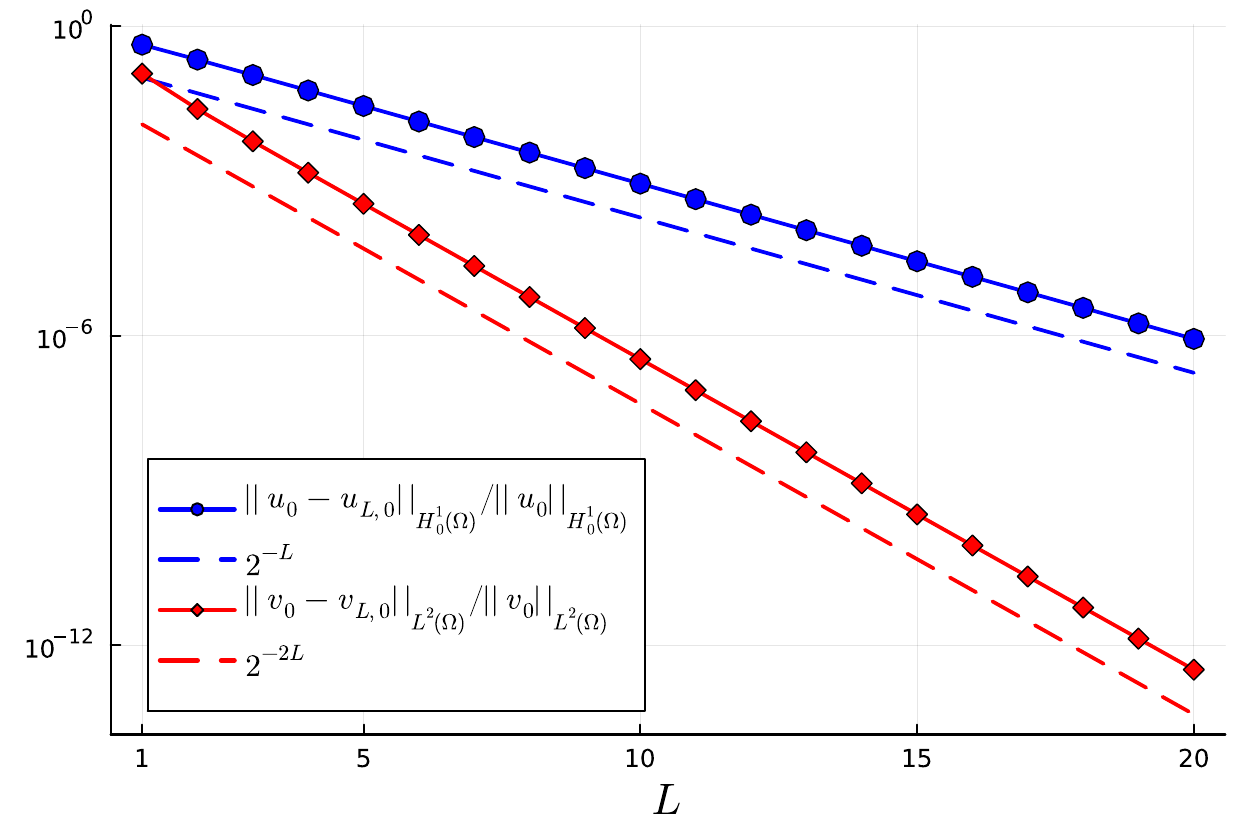}
    \caption{Horizontal axis: number of space levels L. Vertical axis: relative errors between the continuous initial data~\eqref{initial_data_trig} and their approximations from~\eqref{lin_syst_u0} and \eqref{linear_syst_v0} when compressed in the QTT format. The error of the initial position in $\|\cdot\|_{H^1_0(\Omega)}$ (solid-blue line with round markers) decreases as $2^{-L}$ (dashed-blue line). The error of the initial velocity in $\|\cdot\|_{L^2(\Omega)}$ (solid-red line with diamond markers) decreases as $2^{-2L}$ (dashed-red line).}
    \label{fig:QTT_initial_data}
\end{figure}

\begin{remark}\label{remrk:err_initial_data}
    To compute the errors between the discrete and exact initial data we need accurate QTT approximations of the functions defined in~\eqref{initial_data_trig}. Low-rank QTT approximations of these functions are provided following a similar construction as the one in Section~\ref{subsec:initdata}. In this case, instead of considering the $L^2$ projection onto piecewise constant functions, we consider the linear interpolation between the endpoints of $2^{\widetilde{L}}$ subintervals of $\Omega$, with $\widetilde{L} = 60$ as for the inner products~\eqref{RHS}. 
\end{remark}

\subsubsection{Accuracy in the energy norm}\label{subsec:accuracy_test}
In this section, we test the accuracy of the GLRK--FEM when compressed in the QTT format. Let $L \in \N$ be the number of space levels. Let $n_t$ be the number of time steps of the GLRK method with $q \in \N$ stages. With abuse of notation, we denote with $\boldsymbol{u}^{(n)}_L \in \R^{2^L}$ and $\widehat{\boldsymbol{v}}^{(n)}_L \in \R^{2^L}$ the QTT-compressed vectors representing the coefficients of the finite element functions that approximate, respectively, the exact position and exact velocity at each discrete time $t_n = n\tau$, for $n \in \{0,\ldots,n_t\}$. These QTT approximations are provided by applying Algorithm~\ref{alg:GLRK-FEM_precond} in the format if $q>1$, or Algorithm~\ref{alg:GLRK-FEM} if $q=1$. As such, at each discrete time $t_n = n\tau$, the numerical solution is given by
\begin{equation}\label{err_L2H10}
\begin{aligned}
    u^{(n)}_L(x)&=\sum_{i=1}^{N_L} \boldsymbol{u}_L^{(n)}[i] \ \varphi_{L,i}(x), \quad &&x \in \Omega,\\
    v^{(n)}_L(x)&=\sum_{j=1}^{N_L} \widehat{\boldsymbol{v}}_L^{(n)}[j] \ \widehat{\varphi}_{L,j}(x), \quad &&x \in \Omega,
\end{aligned}
\end{equation}
with $N_L=2^L-1$. We
investigate the following errors:
\begin{equation}\label{err_norms}
    \big{\|}u(\cdot,t_{n})-u^{(n)}_L\big{\|}_{H^1_0(\Omega)}, \qquad 
    \big{\|}v(\cdot,t_{n})-v^{(n)}_L\big{\|}_{L^2(\Omega)},
\end{equation}
for $n \in \{0,\ldots,n_t\}$. In computing these errors, QTT representations of the continuous functions at each discrete time are devised as observed in Remark~\ref{remrk:err_initial_data}. Additionally, the GLRK--FEM computations in the format are performed respecting the following assumptions:
\begin{itemize}
    \item A QTT representation of the initial data is provided as described in Section~\ref{subsec:initial_data_computation}.
    \item The step size $\tau$ of the GLRK method satisfies Assumption~\ref{assumpt::q}, i.e., $\tau = 2^{-\ceil{L/q}}$.
    \item System~\eqref{GLKR-FEM_k2_precond} ($q>1$), or~\eqref{GLRK-FEM_k2} ($q=1$), is solved in the QTT format by considering normal equations solved using the truncated gradient descent with $200$ maximal number of iterations.
    \item System~\eqref{GLRK-FEM_u} is solved in the QTT format using the truncated Richardson iteration with $200$ maximal number of iterations.
    \item For each $n \in \{1,\ldots,n_t\}$, the QTT ranks for $\boldsymbol{u}^{(n)}_L \in \R^{2^L}$ are up to $14$. The same rank bounds are used for the intermediate slopes for the velocity when solving system~\eqref{GLRK-FEM_k2}, or its preconditioned version~\eqref{GLKR-FEM_k2_precond}.
\end{itemize}

For $L=15$ space levels and $q=5$ GLRK stages, Table~\ref{tab:err_GLRK-FEM} shows the errors~\eqref{err_L2H10} at each discrete time $t_n = n\tau \in [0,T]=[0,1]$, for $n \in \{0,\ldots,n_t\}$, where $n_t = \tau^{-1} = 2^3$. We can see that there is no error blow-up in time.
However, the convergence rates expected for a smooth solution (Remark~\ref{FP}) are not reached. This might be related to the conditioning behavior of system~\eqref{GLKR-FEM_k2_precond}, whose ill-conditioning compared to system~\eqref{GLRK-FEM_k2} is only mitigated, see~Figure~\ref{fig:precond_q5}. This conditioning behavior slows down the convergence of gradient descent for the normal equations associated with system~\eqref{GLRK-FEM_k2}, meaning that we would need a much larger number of iterations than $200$ to see optimal accuracy for large $L$. Therefore, providing an optimal preconditioner for system~\eqref{GLRK-FEM_k2} is desirable, and its construction is left to future work.

The conditioning behavior of system~\eqref{GLKR-FEM_k2_precond} as the source of suboptimal accuracy, rather than the QTT compression, is corroborated by the results in Figure~\ref{fig:err_GLRK-FEM_q1}, where the relative errors at the final time $T=1$ for $L = 1,\ldots,6$, and $q=1$ are shown. In this case, the matrix defining system~\eqref{GLRK-FEM_k2} is uniformly well-conditioned, as discussed in Section~\ref{subsec:midpoint_matrix}. Therefore, no preconditioning is required, and the iterations compressed in the QTT format are those presented in Algorithm~\ref{alg:GLRK-FEM}. As we can see, optimal convergence rates are achieved for both position and velocity.

\begin{table}[h!]
    \centering
    \caption*{$L = 15$, $q=5$}
    \begin{tabular}{c|c}
    \toprule
    $h_L =$ 3.0517578e-5 & $h_L^2 = $ 9.3132258e-10\\
    \bottomrule
    \end{tabular}
    \\
    \begin{tabular}{lcc}
    \toprule
    $t_n$ & $\big{\|}u(x,t_{n})-u^{(n)}_L\big{\|}_{H^1_0(\Omega)} 
    $ & $\big{\|}v(x,t_{n})-v^{(n)}_L\big{\|}_{L^2(\Omega)}$\\
     \midrule
    0 & 6.1481462e-5 & 7.6097169e-10 \\
    0.125 & 2.5140404e-3 & 3.1326202e-3 \\
    0.25 & 1.5408062e-3 & 7.8311771e-4\\
    0.375 & 2.2194197e-3 & 1.5227397e-3\\
    0.5 & 1.3508461e-3 & 7.1919243e-4 \\
    0.625 & 1.5531260e-3 & 1.1687711e-3\\
    0.75 & 1.8450044e-3 & 1.7954836e-3\\
    0.875 & 2.7934582e-3 & 1.8307384e-3 \\
    1 & 2.6643520e-3 & 1.8919887e-3\\ 
        \bottomrule
    \end{tabular}
    \caption{Time evolution of the errors in $\|\cdot\|_{H^1_0(\Omega)}$ and $\|\cdot\|_{L^2(\Omega)}$for the QTT-compressed discrete position and velocity, respectively, associated with the QTT-compressed iterations described in Algorithm~\ref{alg:GLRK-FEM_precond}, with $L=15$ and $q=5$. We denote with $h_L=2^{-L}$ the uniform mesh size in space.}
    \label{tab:err_GLRK-FEM}
\end{table}

\begin{figure}[h!]
    \centering
    \includegraphics[width=0.6\linewidth]{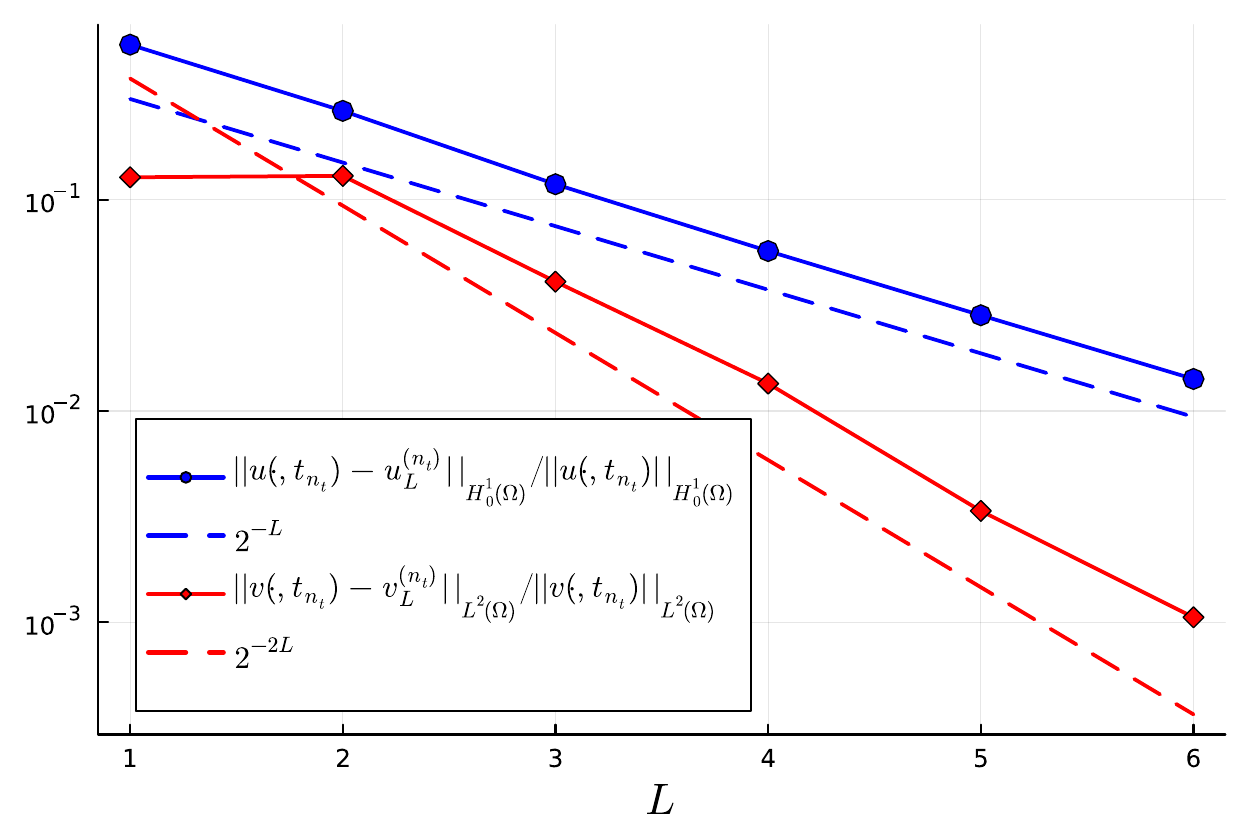}
    \caption{Horizontal axis: number of space levels $L$. Vertical axis: relative errors at the final time $t_{n_t} = T$ between the exact solution~\eqref{exact_sol_u}-\eqref{exact_sol_v} and their discrete approximations computed using Algorithm~\ref{alg:GLRK-FEM}, when compressed in the QTT format. The number $q$ of GLRK stages is $q=1$, i.e., the implicit midpoint method is employed. The position error in $\|\cdot\|_{H^1_0(\Omega)}$ (solid-blue line with round markers) decreases as $2^{-L}$ (dashed-blue line). The velocity error in $\|\cdot\|_{L^2(\Omega)}$ (solid-red line with diamond markers) decreases as $2^{-2L}$ (dashed-red line).}
    \label{fig:err_GLRK-FEM_q1}
\end{figure}

\subsubsection{Energy conservation}\label{subsec:energy_low-freq}
In this section, we demonstrate the energy conservation of the QTT-compressed GLRK--FEM. As previously mentioned, the GLRK--FEM iterations we use are energy-preserving due to the preservation of quadratic invariants guaranteed by Gauss–Legendre integrators. We aim to show that this energy conservation is not affected by the QTT compression.

As in Section~\ref{subsec:accuracy_test}, we denote with $\boldsymbol{u}^{(n)}_L \in \R^{2^L}$ and $\widehat{\boldsymbol{v}}^{(n)}_L \in \R^{2^L}$ the vectors represented in the QTT format that approximate, respectively, the exact position and exact velocity at each discrete time. The computations are carried out under the same assumptions as in Section~\ref{subsec:accuracy_test}. The discrete energy $E^{(n)}_L$~\eqref{E_dis} is computed by performing the following operation in the QTT format:
    \begin{equation*}
        E^{(n)}_L = \frac{1}{2}\boldsymbol{u}^{(n)T}_L 
        \begin{pmatrix}
        \boldsymbol{C}_L\widehat{\boldsymbol{A}}_L\boldsymbol{C}_L &  \\
         & 0\\
         \end{pmatrix}
         \boldsymbol{u}^{(n)}_L + \frac{1}{2}\widehat{\boldsymbol{v}}^{(n)T}_L 
         \begin{pmatrix}
         \widehat{\boldsymbol{M}}_L & \\
         & 0 \\
        \end{pmatrix}
        \widehat{\boldsymbol{v}}^{(n)}_L.
    \end{equation*}
The %total 
energy $E$~\eqref{Energy} of the exact solution~\eqref{exact_sol_u}-\eqref{exact_sol_v} satisfies $E(t)\equiv\frac{\pi^2}{2}$. 

Let $L=15$ and $q=5$. Assumption~\ref{assumpt::q} implies $n_t = 2^3$. In Figure~\ref{fig:energy}, we show the energy behavior of the GLRK--FEM iterations of Algorithm~\ref{alg:GLRK-FEM_precond} compressed in the QTT format. As one can observe, the energy is preserved with a relative error that remains bounded by $10^{-4}$. Additionally, in the first few time steps, the energy relative error is bounded by the mesh size $2^{-L}$. As we approach the final time, there is a slight increase in the discrete energy (Figure~\ref{fig:energy1}) and the corresponding relative error (Figure~\ref{fig:energy2}), which might be related to the conditioning behavior of system~\eqref{GLKR-FEM_k2_precond}. Although its condition number is lower than that of system~\eqref{GLRK-FEM_k2}, it is still of significant magnitude; see~Figure~\ref{fig:precond_q5}. 

For a comparison, we consider the GLRK--FEM with~$q=1$. In this case, no preconditioning is required and the iterations we compress in the format are those in Algorithm~\ref{alg:GLRK-FEM}. In Figure~\ref{fig:energy_q1}, we plot the relative error in the energy obtained with~$L=6$ levels and observe that it remains bounded by the square of the spatial mesh size, $2^{-2L}$, in contrast with~$2^{-L}$ obtained for~$q>1$.

\begin{figure}[h!]
    \centering
    \begin{subfigure}[b]{0.45\linewidth}
        \centering
        \includegraphics[width=\linewidth]{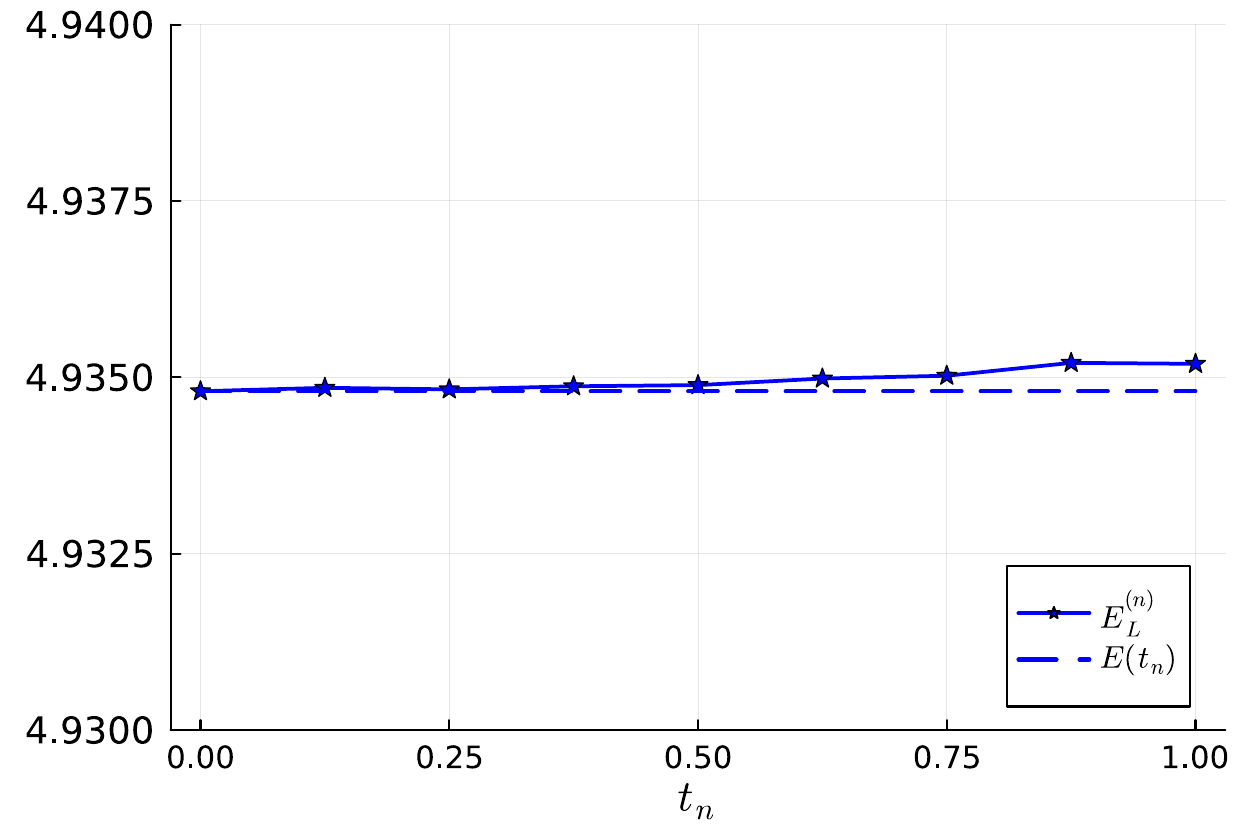}
        \caption{Energy.}
        \label{fig:energy1}
    \end{subfigure}
    \hfill
    \begin{subfigure}[b]{0.45\linewidth}
        \centering
        \includegraphics[width=\linewidth]{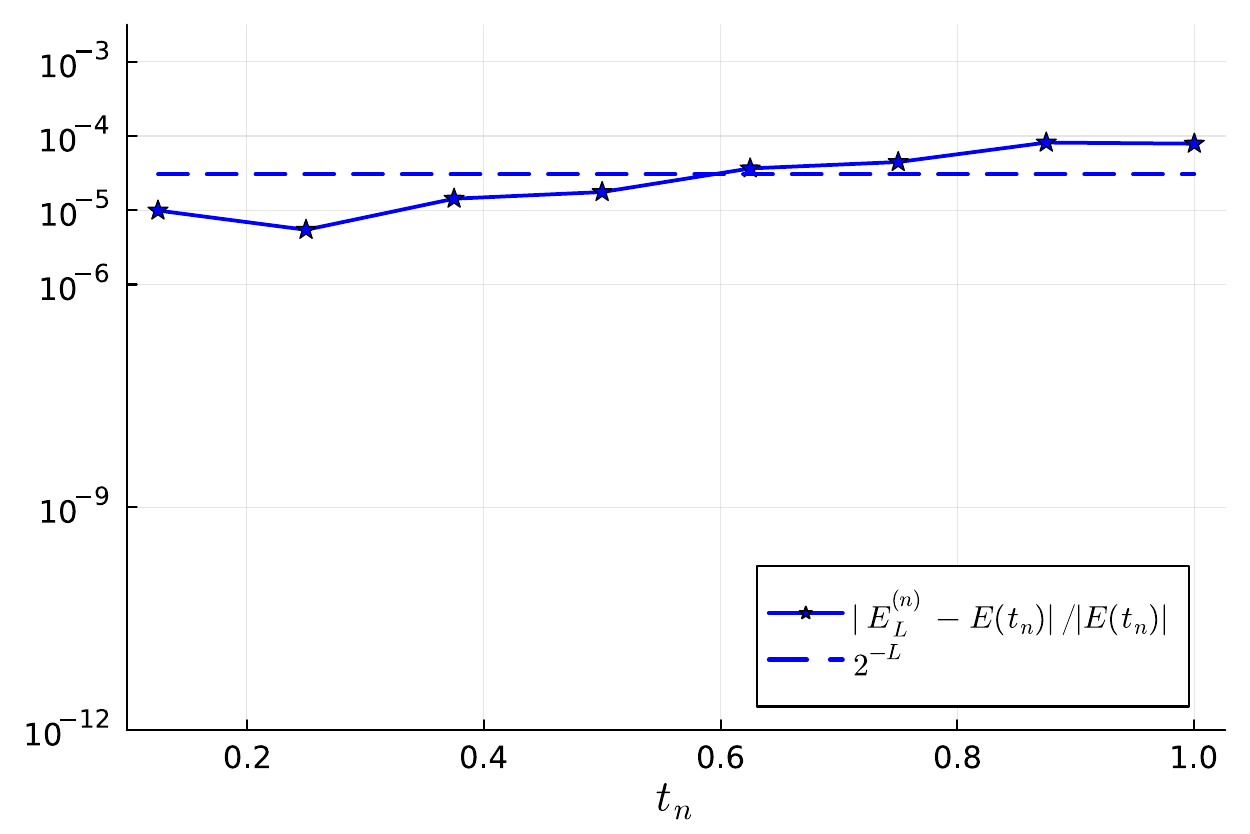}
        \caption{Energy error.}
        \label{fig:energy2}
    \end{subfigure}
    \caption{$L =15$ is the number of space levels, and $q=5$ is the number of GLRK stages. The step size $\tau$ of the GLRK--FEM iterations in Algorithm~\ref{alg:GLRK-FEM_precond} satisfies Assumption~\ref{assumpt::q}. On the horizontal axis, there are the discrete times $t_n = n\tau \in [0,1]$, $n \in \{0,\ldots,n_t\}$ with $n_t = 2^3$. Figure~\ref{fig:energy1} compares the discrete energy $E^{(n)}_L$ (solid line) associated with the QTT-compressed GLRK--FEM iterations detailed in Algorithm~\ref{alg:GLRK-FEM_precond}, and the total energy $E(t_n) = \frac{\pi^2}{2}$ (dashed line) of the exact solution~\eqref{exact_sol_u}-\eqref{exact_sol_v}. Figure~\ref{fig:energy2} shows the relative error (solid line) between the discrete energy and the exact energy. The dashed line of this figure represents the mesh size of the finite element discretization in space.}
    \label{fig:energy}
\end{figure}

\begin{figure}[htb!]
    \centering
    \includegraphics[width=0.45\linewidth]{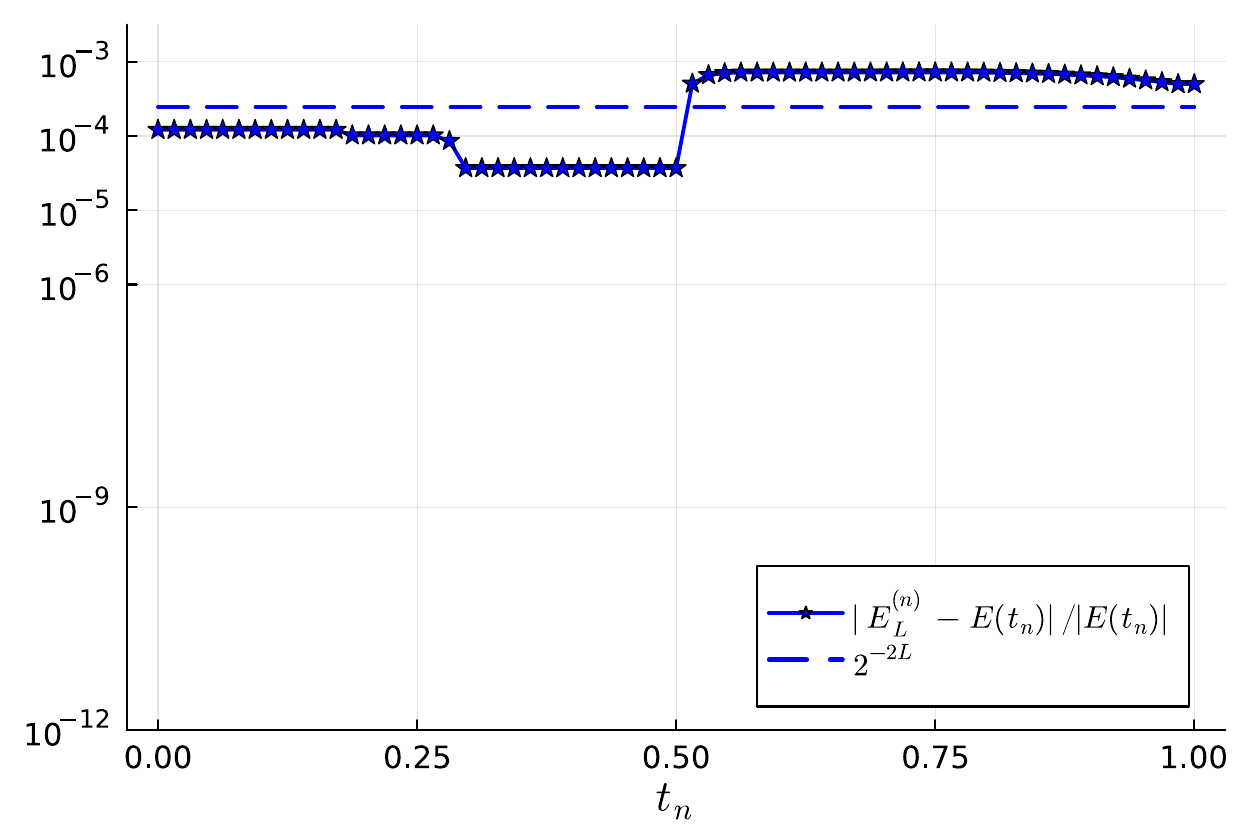}
    \caption{$L =6$ is the number of space levels, and $q=1$ is the number of GLRK stages. Accordingly, the number of time steps $n_t$ is $n_t = 2^{L} = 2^6$. On the horizontal axis, there are the discrete times $t_n = n\tau \in [0,1]$, $n \in \{0,\ldots,n_t\}$.  
    The plot shows the relative error (solid line) between the discrete energy $E^{(n)}_L$
    associated with the QTT-compressed GLRK--FEM iterations detailed in Algorithm~\ref{alg:GLRK-FEM},
    and the total energy~$E(t_n) = \frac{\pi^2}{2}$ of the exact solution~\eqref{exact_sol_u}-\eqref{exact_sol_v}. The dashed line of this figure represents the square of the mesh size of the finite element discretization in space.}
    \label{fig:energy_q1}
\end{figure}

\subsection{Example 2: high-frequency oscillations}
In this section, we test the robustness of the QTT-compressed one-stage GLRK--FEM (see Algorithm~\ref{alg:GLRK-FEM}) for high-frequency oscillations. Specifically, we consider the following solution of the wave propagation problem~\eqref{eq_wave_ham}:
\begin{subequations}\label{exact_sol_high-freq}
\begin{align}
    u(x,t) &= (\cos(k\pi t) + \sin(k\pi t)) \sin(k\pi x), &&(x,t) \in \Omega \times (0,T)=(0,1)^2,\label{exact_sol_u_high-freq}\\
    v(x,t) &= k\pi (\cos{(k\pi t)} - \sin{(k\pi t)}) \sin(k\pi x), &&(x,t) \in \Omega \times (0,T),
    \label{exact_sol_v_high-freq}
\end{align}
\end{subequations}
where $k \in \N$ determines the space and time frequencies of the corresponding wave. 
The corresponding initial data are
\begin{subequations}\label{initial_data_high-freq}
\begin{align}
    u_0(x) &= \sin{(k\pi x)}, &x \in \Omega,\\
    v_0(x) &= k\pi \sin{(k\pi x)}, &x \in \Omega.
\end{align}
\end{subequations}
For large wave numbers $k$, the QTT compression is promising, enabling us to construct finite element spaces on uniformly refined nested meshes that accurately resolve the high-frequencies, whose degrees of freedom are never accessed directly.

\begin{remark}
    In principle, performing the computations of the QTT-compressed GLRK--FEM with $q>1$ stages is a reasonable approach to handle a large number of space discretization levels, which are desirable for large values of $k$. However, the operator defined in~\eqref{precond} does not provide an optimal preconditioner. Consequently, the approximations obtained for $q>1$, and large values of $L$, are suboptimal, as already observed in Table~\ref{tab:err_GLRK-FEM}. For this reason, in this section, we restrict our focus to the results obtained for the QTT-compressed one-stage GLRK--FEM.
\end{remark}

\begin{remark}
Since solution~\eqref{exact_sol_high-freq} is defined by trigonometric functions, the corresponding initial data satisfy Theorem~\ref{th::low_rank_functions} (Example~\ref{example:low_rank_functions}), and the compatibility conditions of~\cite[Th. 8]{FP1996} hold. Furthermore, the same QTT rank bounds of  Section~\ref{sec:low-freq} hold.
\end{remark}

\subsubsection{Construction of the discrete initial data}
Let $L\in \N$ be the number of space levels. We denote with $\boldsymbol{u}_{L,0}$ and $\widehat{\boldsymbol{v}}_{L,0}$ the QTT representations of the initial data that we compute solving~\eqref{lin_syst_u0} and \eqref{linear_syst_v0} in the QTT format. These computations are performed as described in Section~\ref{subsec:initial_data_computation}. Figure~\ref{fig:QTT_initial_data_high-freq} shows the relative errors obtained from these computations when $k = 10$. We remark that the minimum $L$ we consider is chosen depending on~$k$. 
As shown, exponential convergence in $L$ 
is achieved for these QTT-compressed projections of the high-frequency functions defined in~\eqref{initial_data_high-freq}, as in the low-frequency case (see Section~\ref{subsec:initial_data_computation}).

\begin{figure}[h!]
    \centering
    \includegraphics[width=0.6\linewidth]{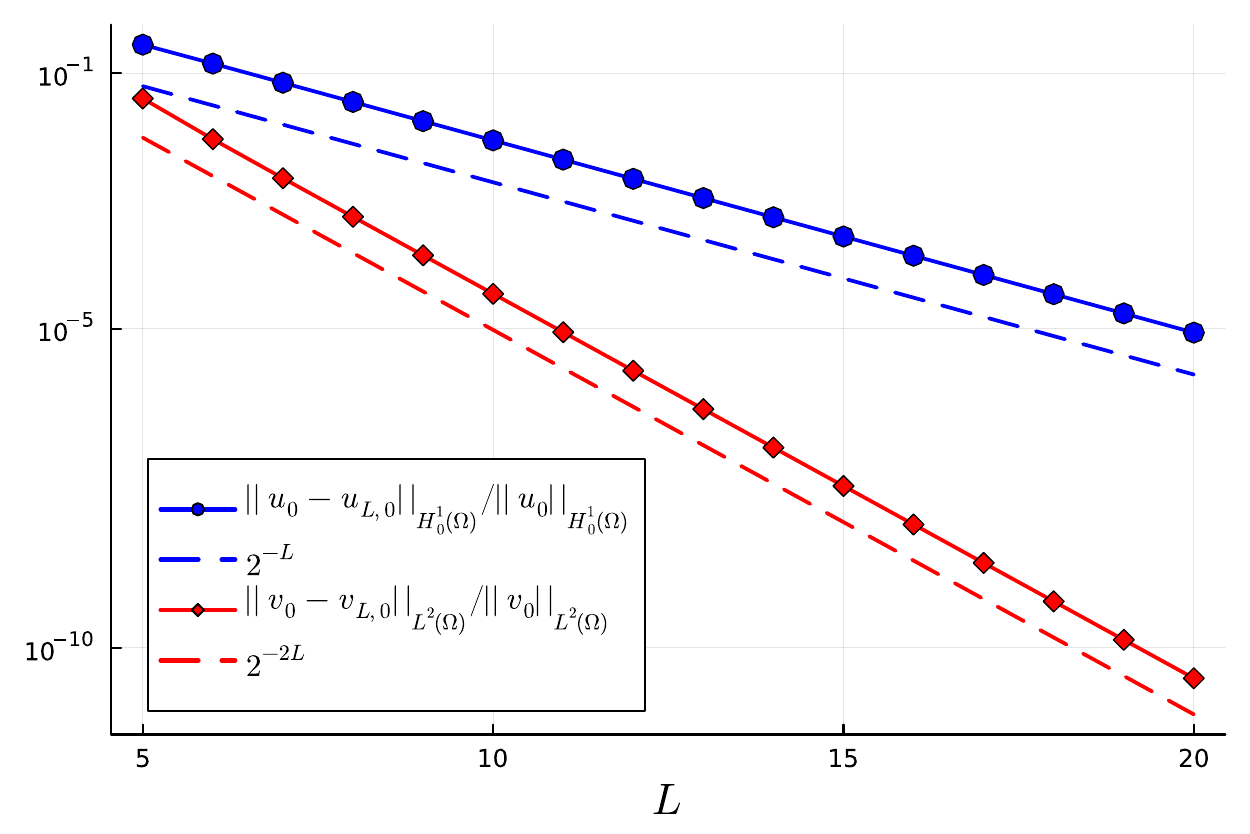}
    \caption{Horizontal axis: number of space levels L. Vertical axis: relative errors between the continuous initial data~\eqref{initial_data_high-freq} with $k=10$, and their approximations from~\eqref{lin_syst_u0} and \eqref{linear_syst_v0} when compressed in the QTT format. The error of the initial position in $\|\cdot\|_{H^1_0(\Omega)}$ (solid-blue line with round markers) decreases as $2^{-L}$ (dashed-blue line). The error of the initial velocity in $\|\cdot\|_{L^2(\Omega)}$(solid-red line with diamond markers) decreases as $2^{-2L}$ (dashed-red line).}
    \label{fig:QTT_initial_data_high-freq}
\end{figure}

\subsubsection{Accuracy in the energy norm}\label{subsec:acc_high-freq}
In this section, we investigate the accuracy of the QTT-compressed one-stage GLRK--FEM approximating the solution~\eqref{exact_sol_high-freq} for different values of $k \in \N$.

Let $L \in \N$ represent the number of space levels, and $n_t$ be the number of time steps in the one-stage GLRK method. At each discrete time $t_n = n\tau$, the numerical solution is given by
\begin{equation}\label{err_L2H10_high-freq}
\begin{aligned}
    u^{(n)}_L(x)&=\sum_{i=1}^{N_L} \boldsymbol{u}_L^{(n)}[i] \ \varphi_{L,i}(x), \quad &&x \in \Omega,\\
    v^{(n)}_L(x)&=\sum_{j=1}^{N_L} \widehat{\boldsymbol{v}}_L^{(n)}[j] \ \widehat{\varphi}_{L,j}(x), \quad &&x \in \Omega,
\end{aligned}
\end{equation}
with $N_L=2^L-1$, where $\boldsymbol{u}^{(n)}_L \in \R^{2^L}$ and $\widehat{\boldsymbol{v}}^{(n)}_L \in \R^{2^L}$ denote the QTT-compressed coefficients of the finite element solutions at each discrete time $t_n = n\tau$ for $n \in \{0,\ldots,n_t\}$. The same assumptions as in Section~\ref{subsec:accuracy_test} hold for the computations. Figure~\ref{fig:err_high-freq_q1} shows the relative errors~\eqref{err_norms} at the final time $t_{n_t} = T$ of the QTT-compressed GLRK--FEM solutions~\eqref{err_L2H10_high-freq}, plotted against the number of levels $L$ for different wave numbers $k \in \{4,6,8,10\}$.
In each experiment, the minimum number of levels is chosen depending on~$k$.
We see that optimal convergence rates are obtained for both position and velocity, for each wave number. 
We remark that employing QTT-compressed GLRK--FEM with $q > 1$ could enable significantly larger values of the space levels $L$, leading to increased accuracy. However, this approach requires a robust preconditioner, which is currently unavailable and is left to future work.

\begin{figure}[htbp!]
    \centering
    % Prima riga
    \begin{subfigure}[b]{0.45\linewidth}
        \centering
        \includegraphics[width=\textwidth]{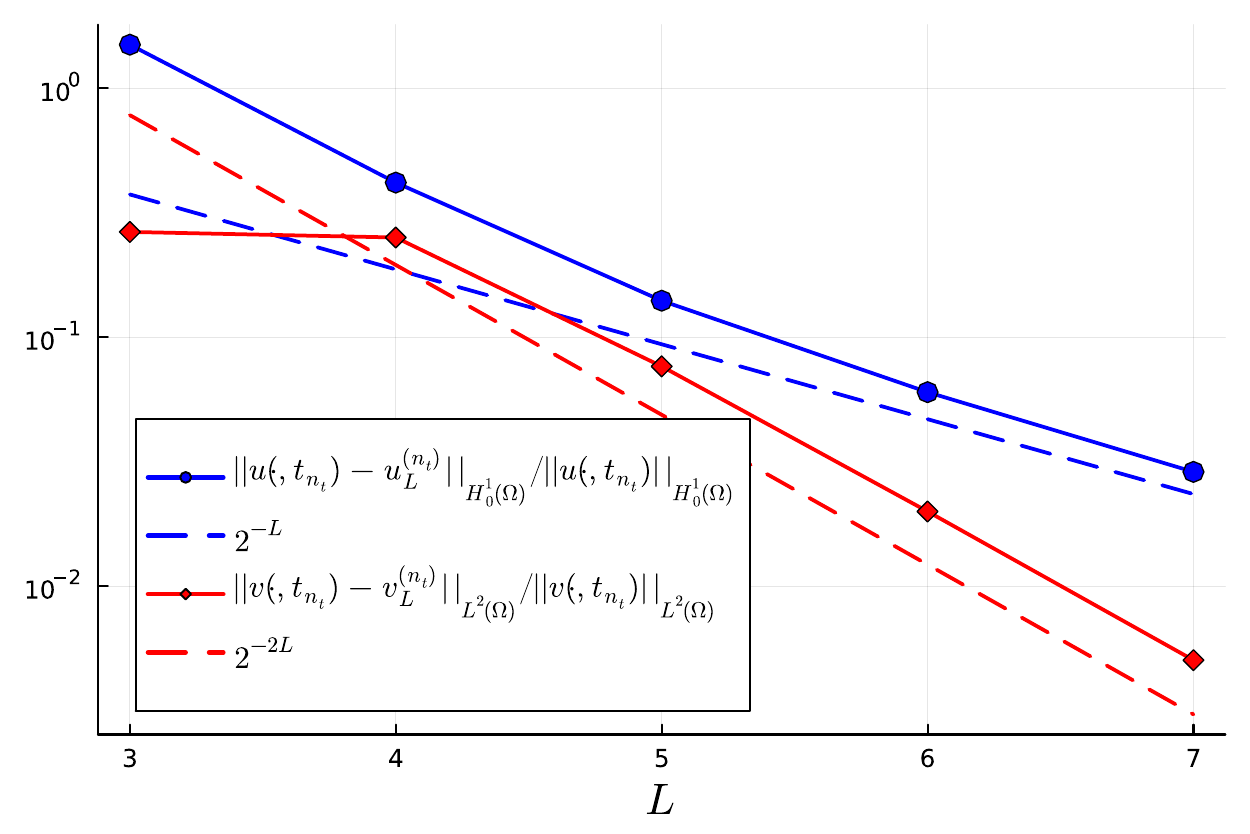}
        \caption{Wave number $k=4$.}
        \label{}
    \end{subfigure}
    \hfill
    \begin{subfigure}[b]{0.45\linewidth}
        \centering
        \includegraphics[width=\textwidth]{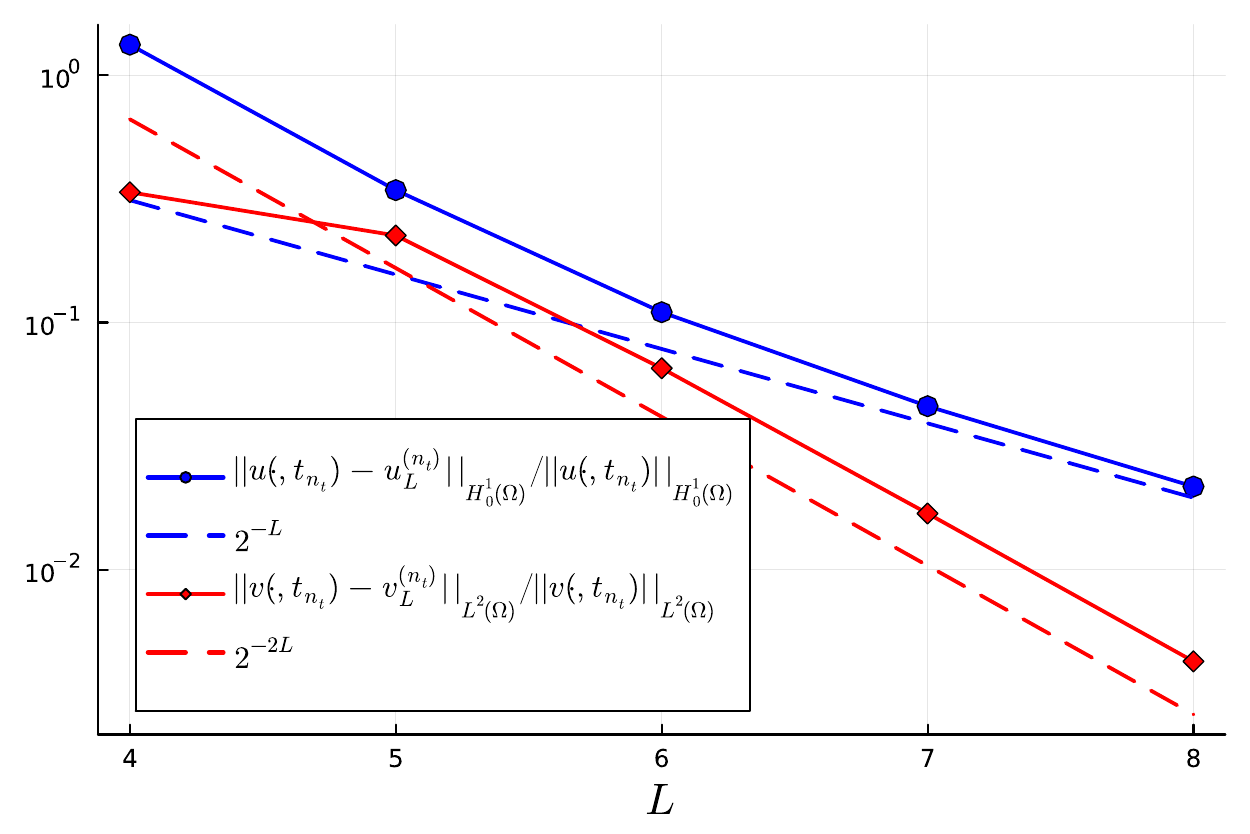}
        \caption{Wave number $k=6$.}
        \label{}
    \end{subfigure} \\[1em]
    
    % Seconda riga
    \begin{subfigure}[b]{0.45\linewidth}
        \centering
        \includegraphics[width=\textwidth]{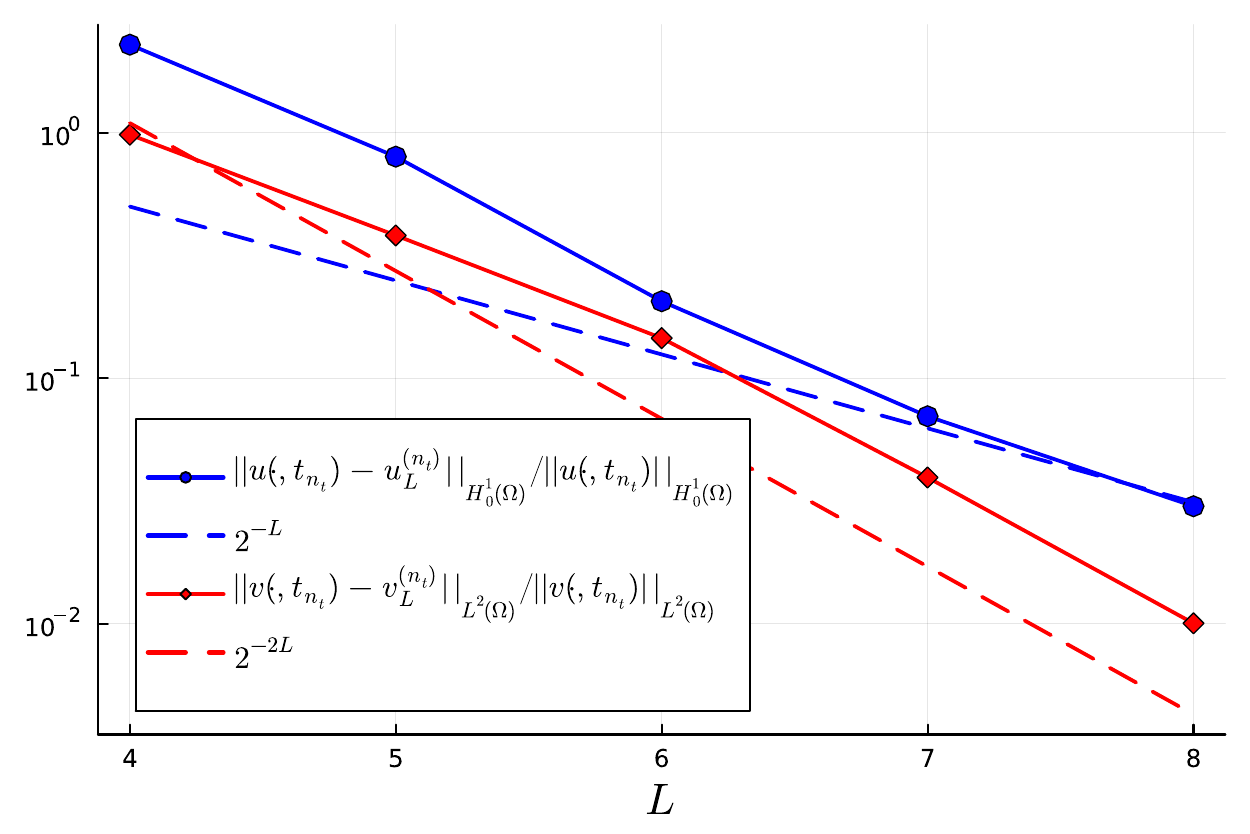}
        \caption{Wave number $k=8$.}
        \label{}
    \end{subfigure}
    \hfill
    \begin{subfigure}[b]{0.45\linewidth}
        \centering
        \includegraphics[width=\textwidth]{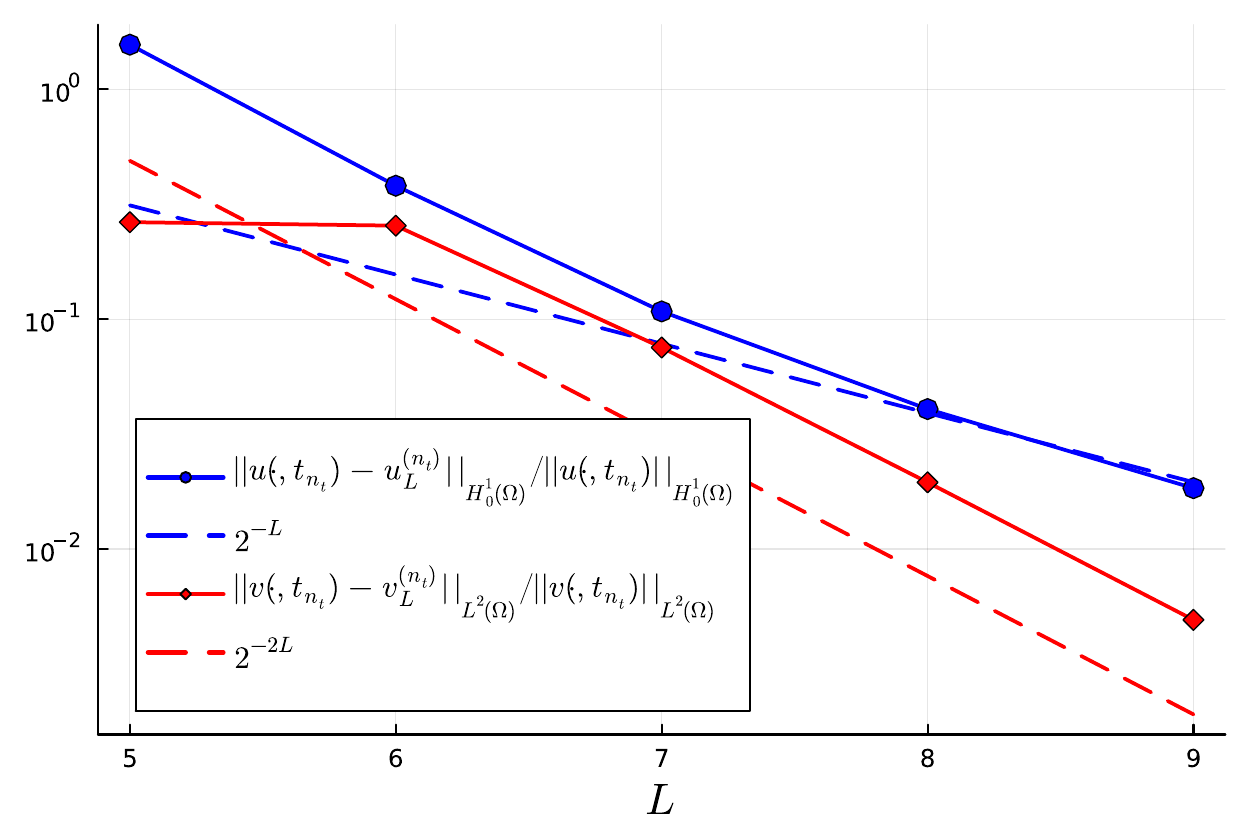}
        \caption{Wave number $k=10$.}
        \label{}
    \end{subfigure} \\[1em]
    \caption{Each plot corresponds to a different wave number $k\in\{4,6,8,10\}$, and illustrates the relative errors at the final time $t_{n_t} = T$ between the exact solution~\eqref{exact_sol_high-freq} and its QTT-compressed approximation, computed using Algorithm~\ref{alg:GLRK-FEM} in the QTT format. The implicit midpoint method is employed. The horizontal axis represents the number of space levels $L$. For each wave number, the convergence rates are as expected. The position error in $\|\cdot\|_{H^1_0(\Omega)}$ (solid-blue line with round markers) decreases as $2^{-L}$ (dashed-blue line), while the velocity error in $\|\cdot\|_{L^2(\Omega)}$ (solid-red line with diamond markers) decreases as $2^{-2L}$ (dashed-red line).}
    \label{fig:err_high-freq_q1}
\end{figure}

\subsubsection{Energy conservation}
In this section, we demonstrate the energy conservation of the QTT-compressed one-stage GLRK--FEM approximating solution~\eqref{exact_sol_high-freq} for different values of $k \in \N$. For each $k \in \N$, the %total 
energy $E$~\eqref{Energy} of the exact solution~\eqref{exact_sol_high-freq} satisfies $E(t)\equiv\frac{k^2\pi^2}{2}$. 

As in Section~\ref{subsec:acc_high-freq}, we take 
$k \in \{4,6,8,10\}$. We compute the discrete energy as described in Section~\ref{subsec:energy_low-freq}. For each~$k$, the number of space levels $L$ for the corresponding finite element discretization is the highest one
we have considered in Figure~\ref{fig:err_high-freq_q1}. Assumption~\ref{assumpt::q} implies $n_t = 2^L$. We see in Figure~\ref{energy_high-freq_k810} that the discrete energy is preserved without oscillations, except for the ones due to machine precision. Indeed, the relative errors in the discrete energy we have observed for~$k \in \{4,6,8,10\}$ are, approximately,~$10^{-12}$. 

The relative error is bounded from above by the mesh size $2^{-L}$, and from below by its square $2^{-2L}$. No error blow-up occurs while approaching the final time.

\begin{figure}
    \centering
        % Prima riga
        \begin{subfigure}[b]{0.45\linewidth}
        \centering
        \includegraphics[width=0.8\textwidth]{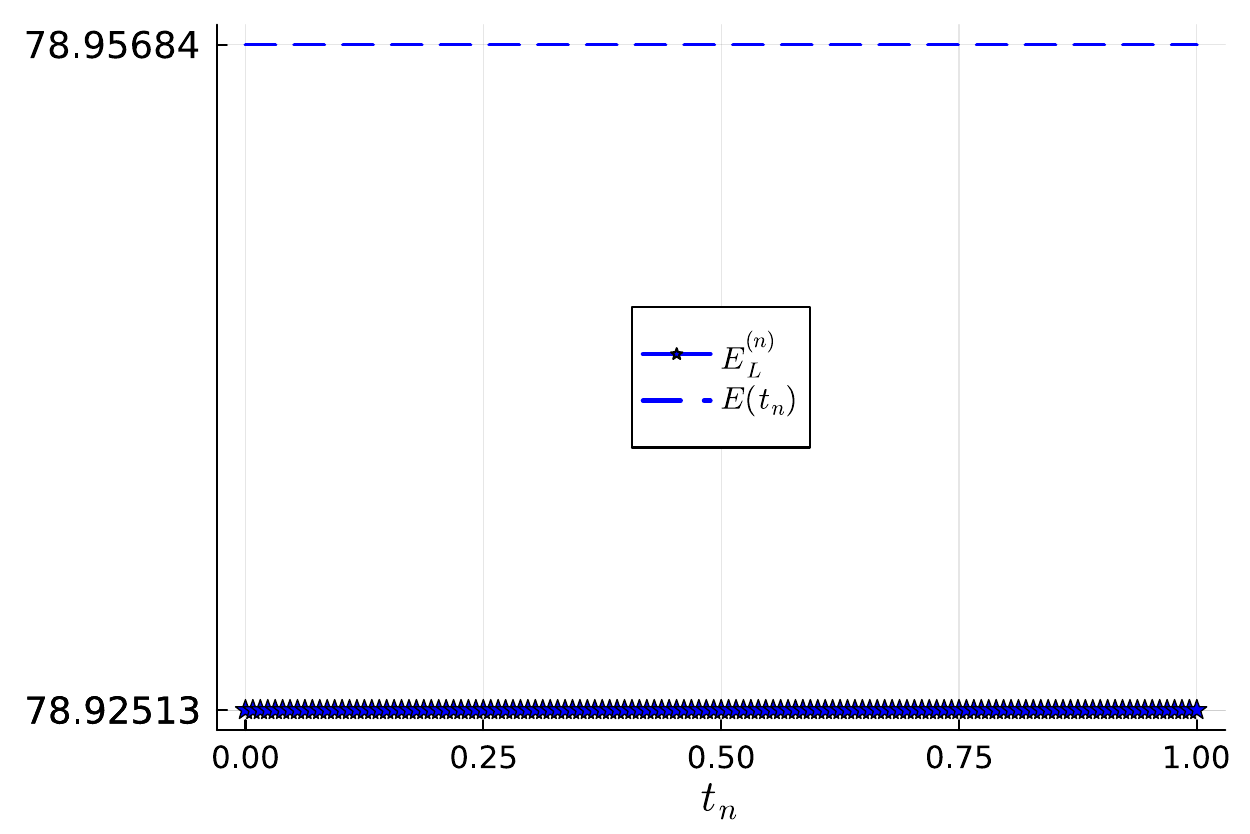}
        \caption{Exact and discrete energy for wave number $k=4$, and space levels $L=7$.}
        \label{}
    \end{subfigure}
    \hfill
    \begin{subfigure}[b]{0.45\linewidth}
        \centering
        \includegraphics[width=0.8\textwidth]{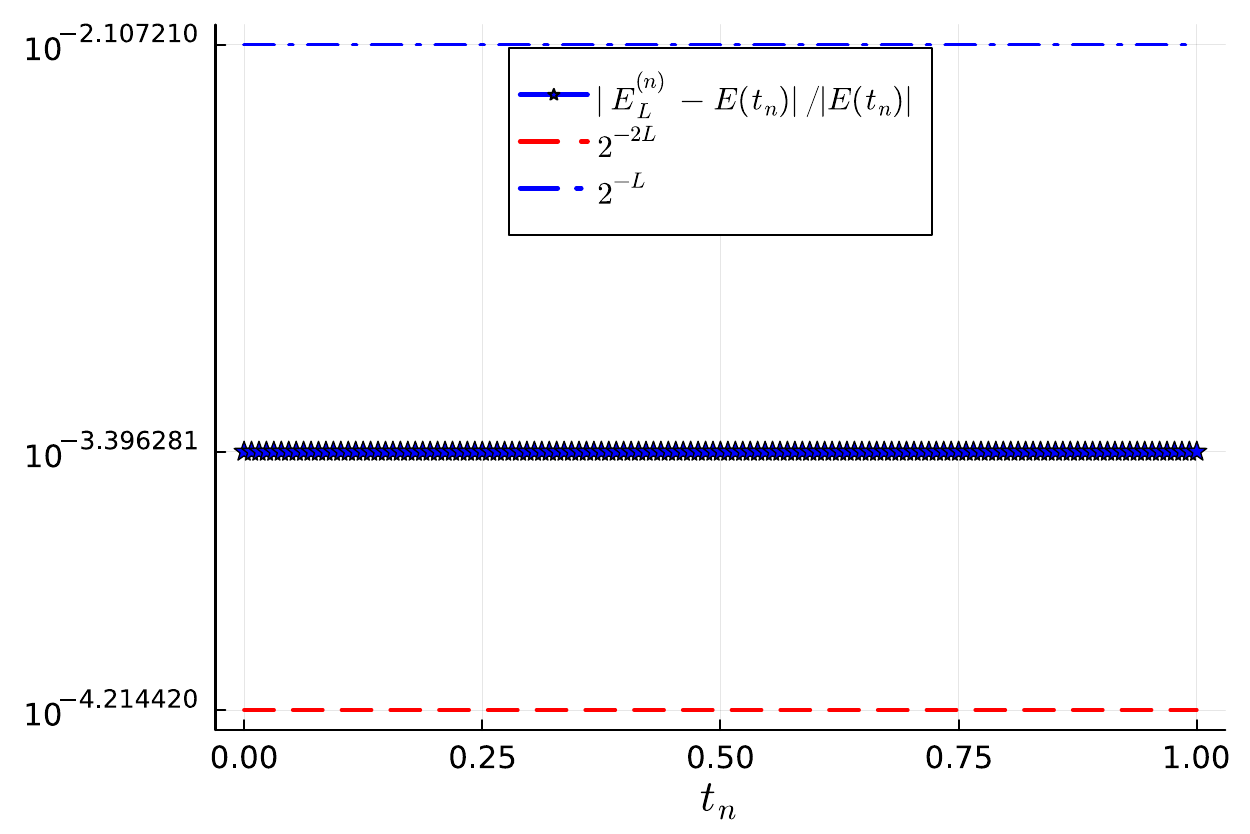}
        \caption{Energy error for wave number $k=4$, and space levels $L=7$.}
        \label{}
    \end{subfigure} \\[1em]
    %Seconda riga
    \begin{subfigure}[b]{0.45\linewidth}
        \centering
        \includegraphics[width=0.8\textwidth]{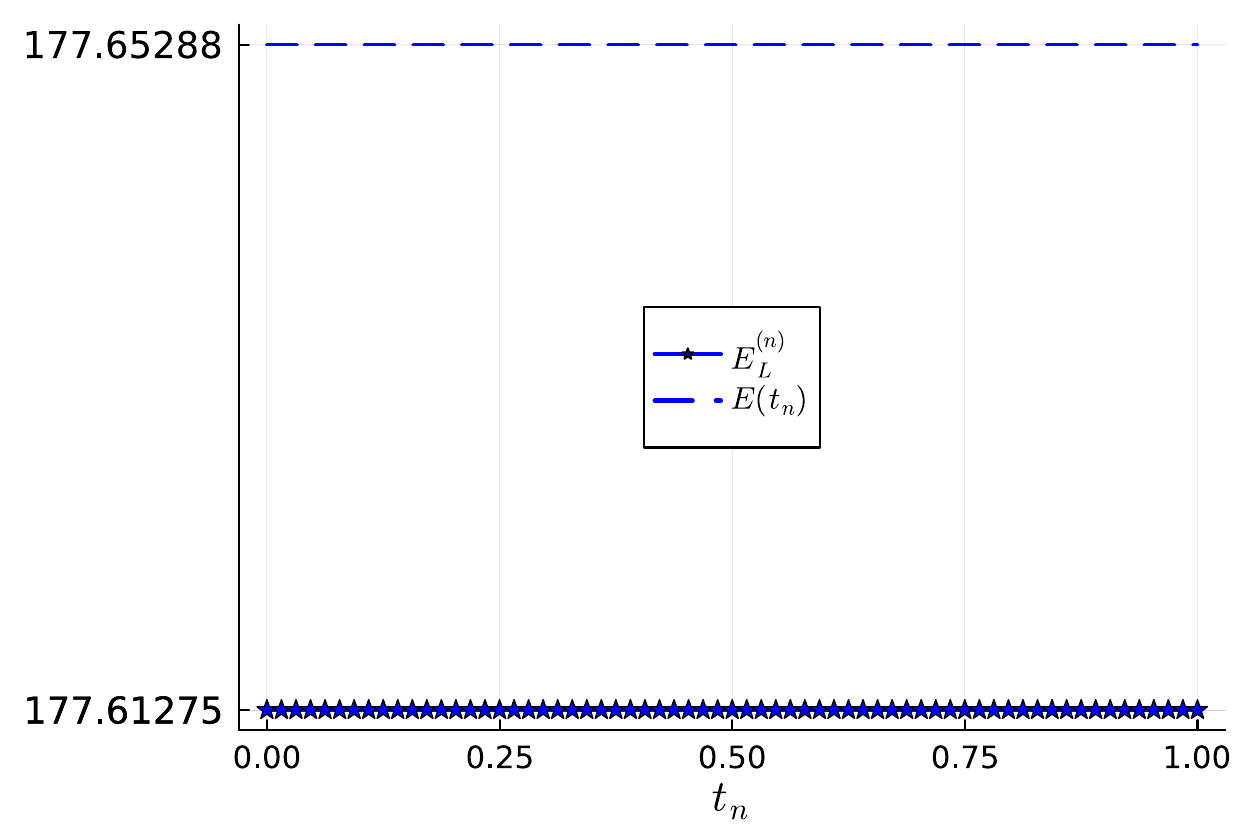}
        \caption{Exact and discrete energy for wave number $k=6$, and space levels $L=8$.}
        \label{}
    \end{subfigure}
    \hfill
    \begin{subfigure}[b]{0.45\linewidth}
        \centering
        \includegraphics[width=0.8\textwidth]{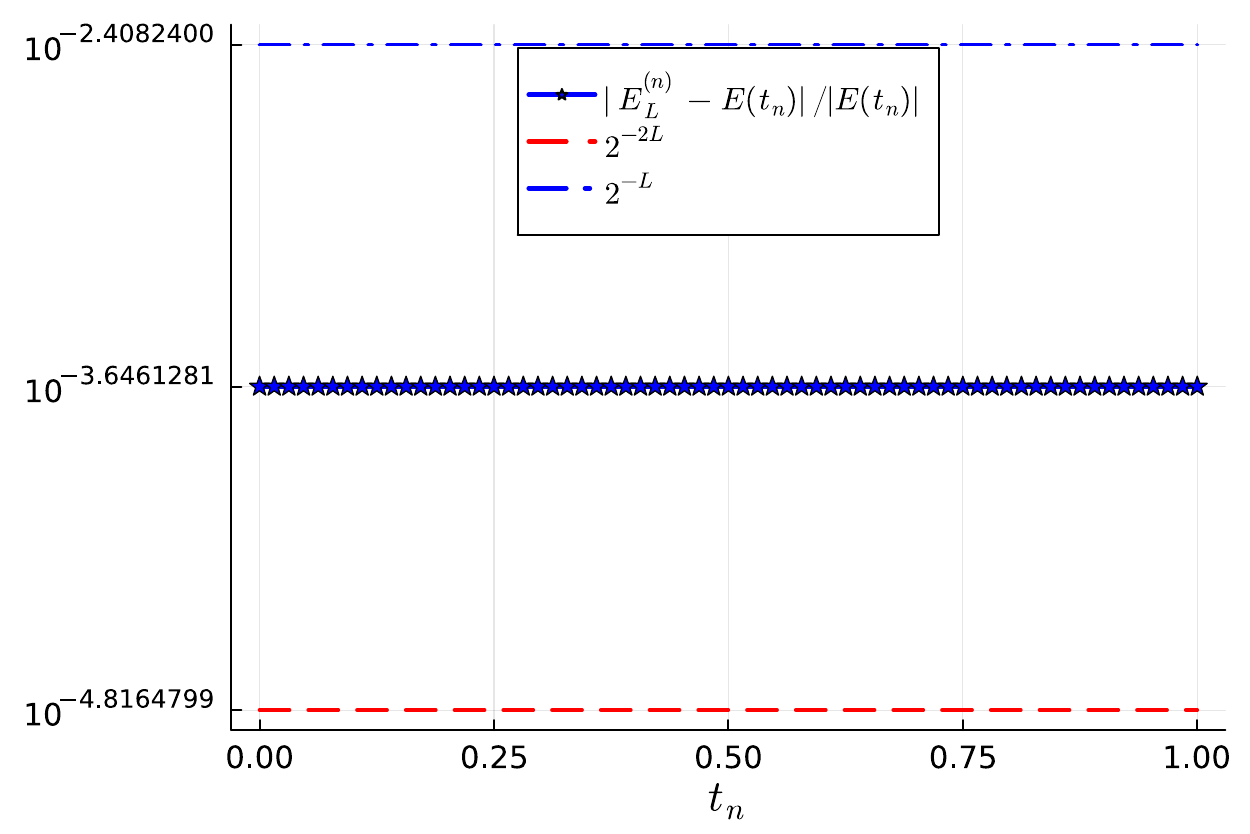}
        \caption{Energy error for wave number $k=6$, and space levels $L=8$.}
        \label{}
    \end{subfigure}\\[1em]

    % Terza riga
    \begin{subfigure}[b]{0.45\linewidth}
        \centering
        \includegraphics[width=0.8\textwidth]{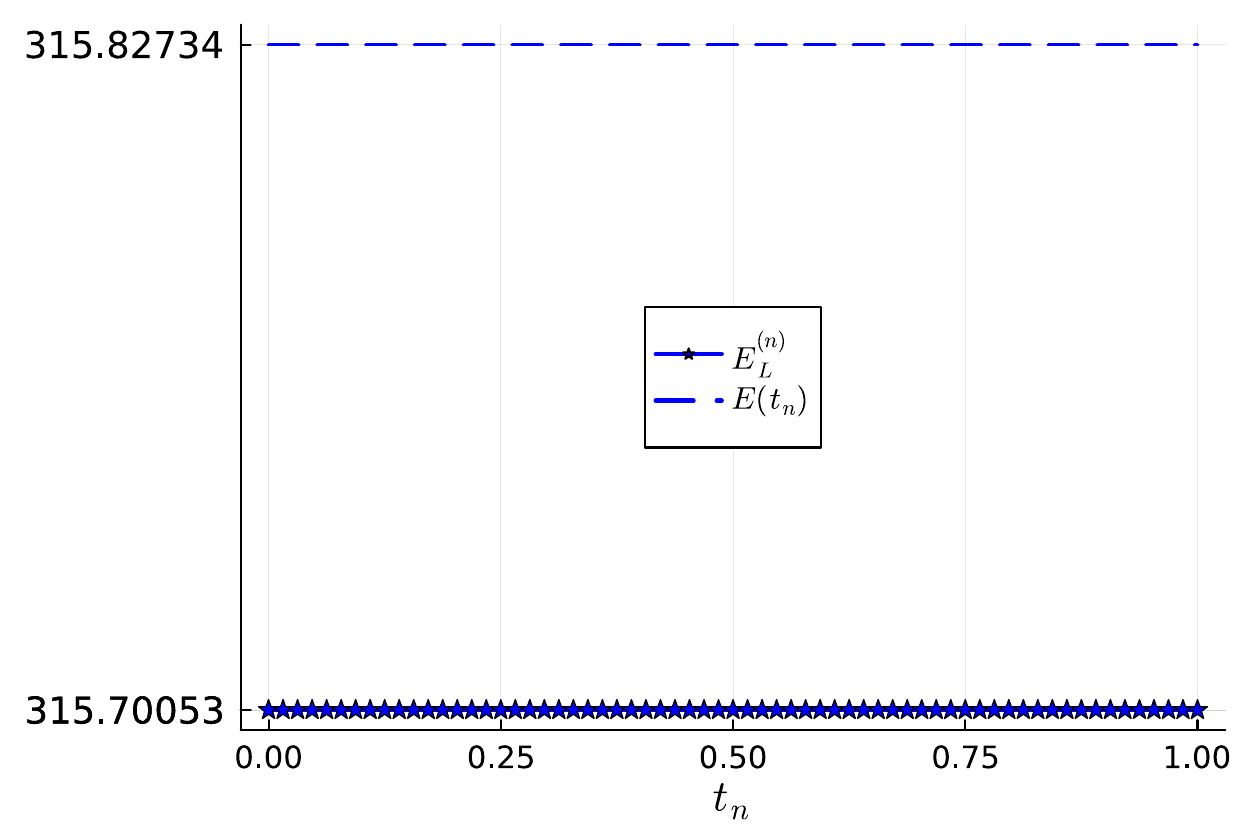}
        \caption{Exact and discrete energy for wave number $k=8$, and space levels $L=8$.}
        \label{}
    \end{subfigure}
    \hfill
    \begin{subfigure}[b]{0.45\linewidth}
        \centering
        \includegraphics[width=0.8\textwidth]{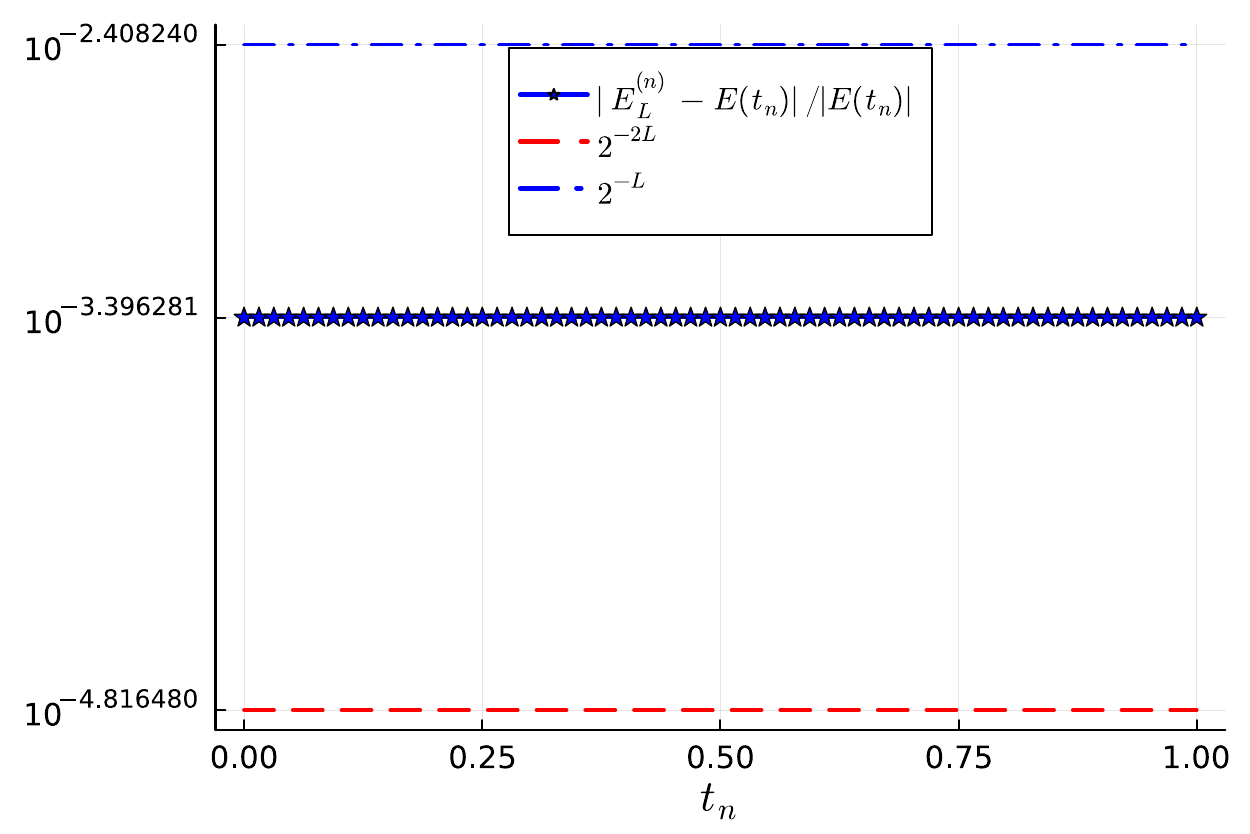}
        \caption{Energy error for wave number $k=8$, and space levels $L=8$.}
        \label{}
    \end{subfigure} \\[1em]

    % Quarta riga
    \begin{subfigure}[b]{0.45\linewidth}
        \centering
        \includegraphics[width=0.8\textwidth]{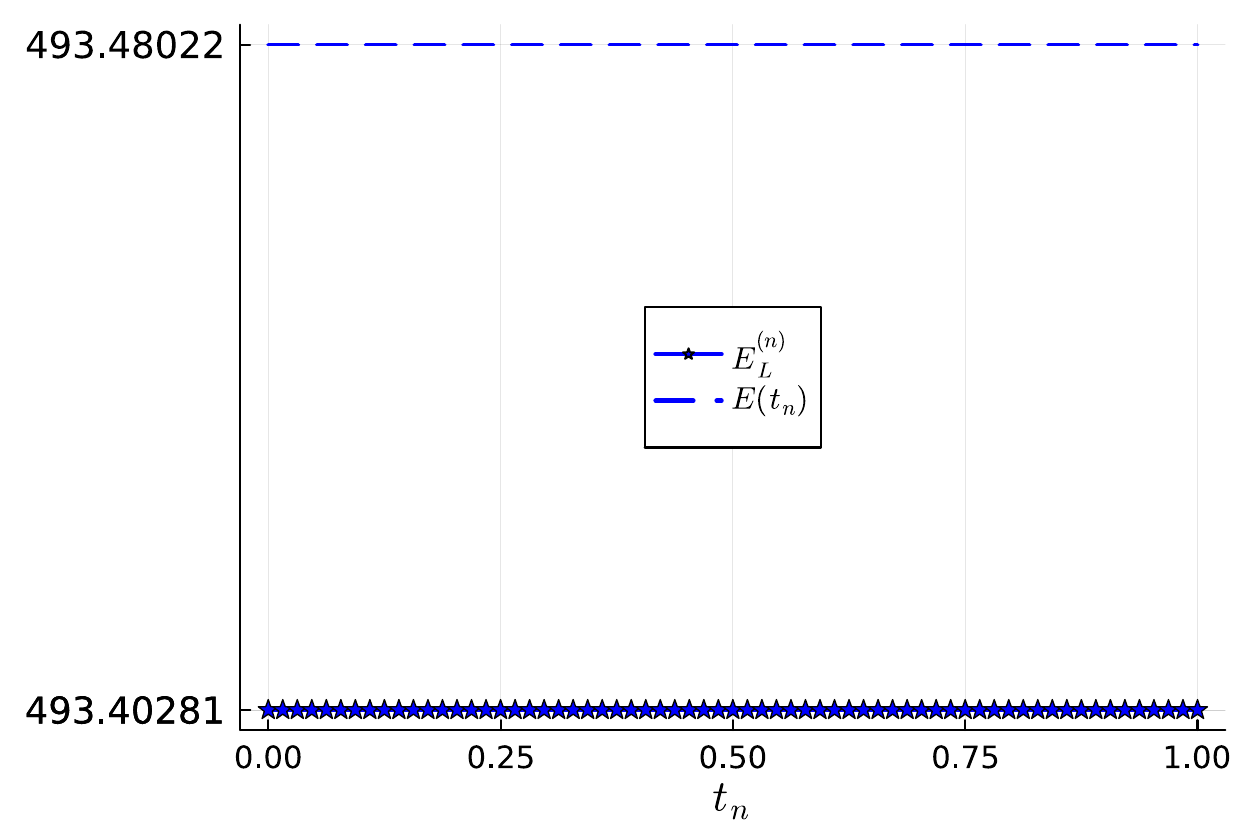}
        \caption{Exact and discrete energy for wave number $k=10$, and space levels $L=9$.}
        \label{}
    \end{subfigure}
    \hfill
    \begin{subfigure}[b]{0.45\linewidth}
        \centering
        \includegraphics[width=0.8\textwidth]{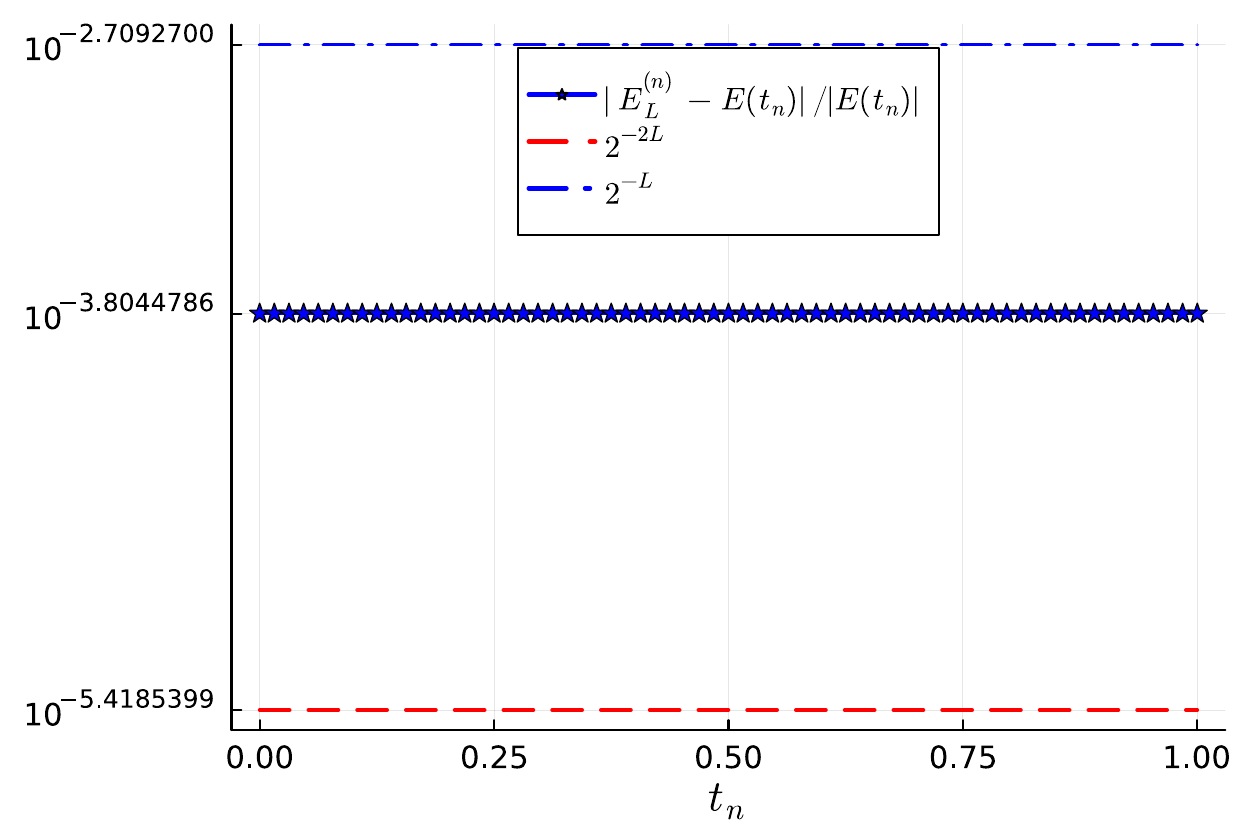}
        \caption{Energy error for wave number $k=10$, and space levels $L=9$.}
        \label{}
    \end{subfigure}
    \caption{On the horizontal axis, there are discrete times $t_n = n\tau \in [0,1]$ of the one-stage GLRK--FEM~\ref{alg:GLRK-FEM}. In the left column, we compare the discrete energy $E^{(n)}_L$ (solid line) and the total energy $E(t_n)$ (dashed line) of the exact solution~\eqref{exact_sol_high-freq} for $k\in\{4,6,8,10\}$. In the right column, we show the relative error (solid line) between the discrete energy and the exact energy. The dashed-dot lines and the dashed lines in the right column are, respectively, the mesh sizes and the square of the mesh sizes of the FEM.}
    \label{energy_high-freq_k810}
\end{figure}

\section{Conclusion}
A symplectic QTT-structured FEM for the one-dimensional acoustic wave equation in the time domain was presented. Our model problem was the first-order-in-time formulation of the wave equation as in~\cite{BL1994,FP1996}. The numerical method we devised combines implicit GLRK time integrators with QTT-compressed, uniform, piecewise linear FEM in space. Numerical tests were conducted for low-frequency and high-frequency, time-harmonic standing waves, confirming the unconditional stability and energy conservation of the method.
The linear system at each time step of the single-stage GLRK scheme (i.e., the midpoint method) is
uniformly well-conditioned in the number of the space discretization levels, and exponential convergence with respect to the number of space levels is
achieved. For higher-order GLRK integrators, we proposed a stabilization technique based on the BPX preconditioner introduced in~\cite{BK2020}. As a result, reasonable approximations were observed, although optimal convergence rates were not attained in this case. Developing an optimal preconditioner is thus essential and is left to future work.

The devised QTT-compressed numerical method was implemented in one space dimension, Dirichlet boundary conditions, and constant wave speed. Our next research step is to extend the QTT-compressed method to wave problems in two space dimensions with 
more general boundary conditions and non-constant coefficients. In this case, the solutions may exhibit spatial point singularities~\cite{BMPS2021}
and leveraging QTT-structured techniques could be beneficial,
as demonstrated in~\cite{KS2018} for elliptic problems.

\section{Acknowledegments}
This research was supported by the Austrian Science Fund (FWF) project \href{https://doi.org/10.55776/F65}{10.55776/F65} (SF, VK, IP) and project \href{https://doi.org/10.55776/P33477}{10.55776/P33477} (SF, IP). SF was also supported by the Vienna School of Mathematics.

\bibliography{mybibliography}{}
\bibliographystyle{plain}

\end{document}